\newcommand{\diam}{\operatorname{diam}}
\newcommand{\vol}{\operatorname{vol}}
\newcommand{\Isom}{\operatorname{Isom}}
\newcommand{\Lg}{\mbox{$\mathfrak g$}}
\newcommand{\Lt}{\mbox{$\mathfrak t$}}
\newcommand{\Out}{\operatorname{Out}}
\newcommand{\Aut}{\operatorname{Aut}}
\newcommand{\Hom}{\operatorname{Hom}}
\newcommand{\Z}{\mathbb{Z}}
\newcommand{\R}{\mathbb{R}}
\newcommand{\C}{\mathbb{C}}
\newcommand{\HH}{\mathbb{H}}
\newcommand{\FF}{\mathbb{F}}
\newcommand{\OO}{\operatorname{O}}
\newcommand{\SO}{\operatorname{SO}}
\newcommand{\PSO}{\operatorname{PSO}}
\newcommand{\U}{\operatorname{U}}
\newcommand{\SU}{\operatorname{SU}}
\newcommand{\PSU}{\operatorname{PSU}}
\newcommand{\Sp}{\operatorname{Sp}}
\newcommand{\PSp}{\operatorname{PSp}}
\newcommand{\PSL}{\operatorname{PSL}}
\newcommand{\PGL}{\operatorname{PGL}}
\newcommand{\GL}{\operatorname{GL}}
\newcommand{\Ss}{\mathbb{S}}
\newcommand{\Co}{c}
\newtheorem{theorem}{Theorem}
\newtheorem{lemma}[theorem]{Lemma}
\newtheorem{proposition}[theorem]{Proposition}
\newtheorem{quest}[theorem]{Question}
\newtheorem*{maintheorem}{Main Theorem}
\theoremstyle{definition}
\newtheorem{definition}[theorem]{Definition}
\theoremstyle{remark}
\newtheorem{remark}[theorem]{Remark}
\newtheorem{example}[theorem]{Example}
\title[A diameter gap for quotients of the unit sphere]{A diameter gap for quotients of the unit sphere}
\date{}
\author[C.~Gorodski]{Claudio Gorodski}
\address{University of S\~ao Paulo, Brazil}
\email{gorodski@ime.usp.br}
\author[C.~Lange]{Christian Lange}
\address{University of Munich, Germany}
\email{lange@math.lmu.de}
\author[A.~Lytchak]{Alexander Lytchak}
\address{University of Cologne, Germany}
\email{alytchak@math.uni-koeln.de}
\author[R.~Mendes]{Ricardo A. E. Mendes}
\address{University of Oklahoma, USA}
\email{Ricardo.Mendes@ou.edu}
\begin{document}

%\begin{abstract}
%We prove existence of $\epsilon>0$ such that, for any $n\geq 3$, and any compact, non-transitive subgroup $G$ of $\OO(n)$, the orbit space $S^{n-1}/G$ has diameter greater than $\epsilon$.
%\end{abstract}

\begin{abstract}
We prove that for any isometric action of a group on  a  unit sphere of dimension larger than one, the quotient space has diameter zero or larger than a universal dimension-independent positive constant.
 % such that  for any isometric actions of a group a  unit sphere of dimension larger than one,   open map theorem  for metric spaces and discuss its %applications.
\end{abstract}

\maketitle

%\tableofcontents

\section{Introduction}

\subsection{Main result} We prove the following gap theorem, answering a question  going  back to Karsten Grove  and investigated in \cite{McGowan93} and \cite{Greenwald00}.

\begin{maintheorem}\label{MT}
There exists some $\epsilon >0$ such that for any $n\geq 2$ and any group $G$
acting by isometries on the unit $n$-dimensional sphere $\Ss^n$  the quotient space $\Ss^n/G$ either has diameter $0$ or at least $\epsilon$. 		
\end{maintheorem}

{Note that the diameter of the quotient space does not change if the group $G$ is replaced by its closure $\bar G$ and that for a closed group $G=\bar G$ the quotient space $S^n/G$ is an Alexandrov space of curvature bounded below by one.}
The diameter of $\Ss^n/G$ is $0$ if and only if any orbit of $G$ is 
dense in $\Ss^n$, thus  if the closure  $\bar G$  of $G$ in $\OO(n)$ acts transitively on $\Ss^n$.

% $\bar G$ is  one of the groups 
%found out by Berger \cite{}.

The case $n=1$ needs to be excluded,  since quotients of $\Ss^1$ by the action of  cyclic groups can have  arbitrary small diameter.

The existence of a dimension-dependent bound   $\epsilon(n)$ has been proved in \cite{Greenwald00}. There, earlier in \cite{McGowan93} and later in  \cite{McGowanSearle05} and \cite{DGMS09} explicit lower bounds on the diameter
have been found for some special classes of actions. Most notably, a lower 
bound $\alpha >\frac 1 4$  has been verified by explicit calculation
for free actions, for finite Coxeter groups and for actions with quotients of dimension $\leq 2$. Recently in an independent preprint   the existence of such a lower bound was  proved \todo{proved instead of claimed}  for \emph{unitary} actions of  \emph{connected} Lie groups with the exception of spin representations \todo{spin representations instead of spin groups} \cite{GauthierRossi18}\todo{added non-spin assumption, removed sentence about a gap in the proof}.  We refer the reader to \cite{GauthierRossi18} for the relevance of this problem to Control Theory.  After finishing this paper we have learned from Ben Green about his recent work \cite{BGreen}, in which a very  strong version of our main theorem was verified for 
\emph{finite} groups $G$.  In particular,  he proved   for such groups that  $\epsilon (n) $ converges to $\pi/2$   if $n$ converges to infinity.

Unlike the existence of a  dimension-dependent constant $\epsilon (n)$ from \cite{Greenwald00}, our dimension-independent bound cannot be derived by a limiting argument. A related fact is that no such lower bound exists for isometric actions on the unit sphere in an infinite-dimensional Hilbert space~\cite{We}. While we have not tried to determine our constant explicitly, the proof indeed provides some explicit bound on $\epsilon$ in the Main Theorem\todo{Replaced Theorem A with Main Theorem}.  In a future work, we hope to bring this explicit bound in a range comparable with the existing examples,  see \cite{DGMS09} for some conjectures about the optimal value of $\epsilon$,  resolved and improved for finite groups in \cite{BGreen}.

\subsection{Related questions}

We start with a generalization of Greenwald's dimension-dependent bound \cite{Greenwald00}  and characterize compact Riemannian manifolds for which such a bound exists. The proof relies on a limiting argument similar to the one employed by Greenwald.
\begin{theorem}\label{MT:finitepi1}
Let $M$ be a compact homogeneous Riemannian manifold. Then $\pi_1(M)$ is finite if and only if there exists $\epsilon_M>0$ such that, for every subgroup $G\subset\Isom(M)$, either $\diam(M/G)=0$ or $\diam(M/G)>\epsilon_M$.
\end{theorem}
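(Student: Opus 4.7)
The plan is to prove the two implications separately, combining structural results for compact homogeneous spaces with a Hausdorff-compactness and continuity argument on the space of closed subgroups of the compact Lie group $\Isom(M)$.

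For the direction $\pi_1(M)$ infinite $\Rightarrow$ no gap, I begin with the de Rham decomposition of the universal cover: $\tilde M = \R^k\times N$ isometrically, with $N$ simply connected compact homogeneous and $k\geq 1$. Homogeneity of $M$ forces the linear holonomy of $\pi_1(M)$ acting on $\R^k$ to have a positive-dimensional fixed subspace (otherwise only rotations of $\R^k$ commute with $\pi_1(M)$, which cannot produce a transitive action on $M$). Translations in this fixed subspace descend to a positive-dimensional torus $T\subseteq\Isom_0(M)$ acting non-trivially on $M$. I then pick a closed subgroup $K\subsetneq\Isom_0(M)$ with $\Isom_0(M)=T\cdot K$ and $K$ non-transitive (for instance, $K$ generated by the derived subgroup of $\Isom_0(M)$ together with a suitable isotropy). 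The subgroups $G_N:=\langle F_N,K\rangle$, where $F_N\subset T$ is a finite subgroup of order $N\to\infty$, are closed, proper and non-transitive, and converge to $\Isom_0(M)$ in the Hausdorff topology on closed subgroups of $\Isom(M)$. Since $G\mapsto\diam(M/G)$ is continuous under Hausdorff convergence (orbits converge in the Hausdorff metric), $\diam(M/G_N)\to 0$ while remaining positive, ruling out any uniform gap.

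For the converse, $\pi_1(M)$ finite $\Rightarrow$ gap, I first reduce to the simply connected case: since $\pi_1(M)$ is finite, the universal cover $\tilde M\to M$ is finite-sheeted, and every closed subgroup $G\subseteq\Isom(M)$ lifts to a closed subgroup $\tilde G\subseteq\Isom(\tilde M)$ (containing the finite deck group) with $\tilde M/\tilde G$ isometric to $M/G$, so a gap for $\tilde M$ yields one for $M$. Assuming now $M$ is simply connected, I argue by contradiction: if no gap exists, extract closed $G_i\subseteq\Isom(M)$ with $0<\diam(M/G_i)\to 0$; by compactness of the Hausdorff topology on closed subgroups of $\Isom(M)$, pass to a subsequential limit $G_i\to H$, and by continuity of the diameter function $\diam(M/H)=0$, so $H$ acts transitively.

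The main obstacle is to close this contradiction: I must show that the transitivity of $H$, combined with simple-connectedness of $M$, forces $G_i$ to be transitive for large $i$, contradicting $\diam(M/G_i)>0$. The key structural input is the classical fact that for a compact simply connected homogeneous space, the identity component of the isometry group has finite center (equivalently, is semisimple up to a finite central extension). Consequently $H_0\subseteq\Isom_0(M)$ is a transitive closed subgroup with finite center. A rigidity principle — a compact Lie group with finite center cannot arise as the Hausdorff limit of its proper closed subgroups, since any such densification requires a positive-dimensional central torus to carry the finer and finer approximating finite lattices — then forces the identity components $(G_i)_0$ to coincide with $H_0$ for $i$ large; matching the finitely many remaining components yields $G_i=H$ eventually, and in particular $G_i$ is transitive, the desired contradiction.
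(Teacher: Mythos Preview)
Your strategy for the ``$\pi_1(M)$ finite $\Rightarrow$ gap'' direction is the paper's---contradiction, Hausdorff limit $H$ acting transitively, then a rigidity statement for semisimple groups---but the execution has a genuine error. You assert that for $M$ simply connected $\Isom_0(M)$ has finite center, and then that $H_0$ does too. The first claim is already false: a Berger metric on $S^3$ has $\Isom_0=\U(2)$, whose center is $\U(1)$. The second would not follow anyway, since a closed transitive subgroup of a semisimple group need not be semisimple (e.g.\ $\U(n)\subset\SO(2n)$ acting on $S^{2n-1}$). The paper never claims $H$ is semisimple; instead it first invokes Montgomery--Zippin to conjugate the $G_i$ into $H$, then passes to the \emph{semisimple parts} $G_i'=[G_i^0,G_i^0]\subset H'=[H^0,H^0]$, uses Lemma~\ref{L:semisimple} (whose content is exactly that $\pi_1(M)$ is finite $\Leftrightarrow$ the semisimple part of any transitive group is already transitive) to see that $H'$ is transitive and hence each $G_i'$ is proper in $H'$, and only then applies tom Dieck's result that a compact semisimple Lie group is never the limit of its proper closed subgroups. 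You are also missing the Montgomery--Zippin step outright: your ``rigidity principle'' is phrased for proper closed subgroups \emph{of} $H$, but you have only established $G_i\to H$ inside the ambient $\Isom(M)$.

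For the converse direction the paper's argument is much shorter and reuses Lemma~\ref{L:semisimple}: since $\pi_1(M)$ is infinite, the semisimple part $G'$ of (a finite cover of) $\Isom(M)$ is non-transitive, and then $G'\times\Gamma_i$, with $\Gamma_i$ finite and increasingly dense in the central torus, gives the required sequence directly. Your route through a de Rham splitting of $\tilde M$ and a holonomy argument is more circuitous, and the assertions that the linear holonomy of $\pi_1(M)$ on the flat factor has a positive-dimensional fixed subspace, and that a non-transitive complement $K$ with $T\cdot K=\Isom_0(M)$ exists, are not justified as written.
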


Unlike the Main Theorem, the bound in Theorem \ref{MT:finitepi1} above cannot be made independent of the space $M$, even after the metric is rescaled to have a fixed diameter. A counter-example is given by the groups $\SO(n)$, see Example \ref{E:SO(n)}. Nevertheless, the following problem is likely to have an affirmative answer:

\begin{quest}
Does there exist a lower bound for the diameter of quotients of simply-connected compact symmetric spaces depending only on the rank? 
\end{quest}

Note that the rank one case follows from the Main Theorem and Theorem \ref{MT:finitepi1}.

Every Riemannian orbifold with constant curvature one is a good orbifold and
therefore a quotient of the unit sphere. Therefore, a special case of the Main Theorem is the existence of a universal lower bound on the diameter of Riemannian orbifolds with constant curvature one (in the case of manifolds this is the main result in \cite{McGowan93}). Considering curvature negative one instead of one yields the following natural question, which seems to be open: 
\begin{quest}
  Is there a universal lower bound for the diameter of hyperbolic manifolds
  (resp.~orbifolds)?
\end{quest}
Note that the existence of a \emph{dimension-dependent} bound follows from
Margulis' Lemma, see e.g. \cite[Cor.~1, \S12.7]{Ra}.

While our proof of the Main Theorem uses several geometric arguments,
it heavily relies on the structure and classification of compact Lie groups and their representations. Even in the connected case it 
seems to be a difficult task to remove the representation-theoretic arguments from the proof and obtain an affirmative answer to the following:
 
\begin{quest} \label{question}
 	Does there exist a universal constant $\epsilon$ such that for any non-trivial singular Riemannian foliation $\mathcal F$  on a unit sphere 
 	$\Ss^n$, the quotient $\Ss^n/\mathcal F$ has diameter at least $\epsilon$?
\end{quest} 

We refer to \cite{Molino}, \cite{RadeschiSRF} for the theory of singular Riemannian foliations, being a group-free generalization of isometric group actions and to \cite{LytchakRadeschi18}, \cite{MendesRadeschi16} for algebraic   properties of singular Riemannian foliations on spheres. While in codimension one a positive answer to the above problem is a famous theorem of M\"unzner \cite{Muenzner81},  nothing is known in higher \emph{codimensions}.  Even the existence of a dimension-dependent bound $\epsilon (n)$ is presently not known. Nevertheless, we note that Question \todo{replaced Problem with Question} \ref{question} has an affirmative answer for all \emph{currently known} examples, because these are all constructed starting from a homogeneous foliation, and repeatedly composing it with Clifford foliations, see \cite{Radeschi14}.

\subsection{The proof of the Main Theorem}
We are going to explain the main steps involved in the proof of the Main Theorem now. 

First, we may replace $G$ by the closure of its image in $\OO(n+1)$, thus we may assume $G$ to be compact.  

If the representation of $G$ on $V=\R^{n+1}$ is reducible then the quotient 
$\Ss^n /G$ has diameter $\frac {\pi} 2$ or $ \pi$, see Lemma \ref{L:reducibility}.
Using a slightly more refined argument, we deduce that the existence of a normal subgroup $N$ of $G$ acting reducibly on $ V$ implies that $N$ acts as (real, complex, or quaternionic) scalars or that the diameter of $\Ss^n /G$ is at least $\frac \pi {4}$, see Lemma \ref{L:super-reducible}.
  
  Replacing $G$ by a larger group can only decrease diameter. Combining this with the previous observation and ruling out 4 special classes of examples by hand, we reduce the task to the following two main cases.
  
  I) The group $G$ is a (uniformly) finite extension of $G_0\times G_1$ where $G_0$ is the group of $F$-scalars (with $F=\R, \C, \HH$) and $G_1$ is  a  simple, simply connected  compact  group acting on $V$ irreducibly.
  
  II) The connected component $G^0$ of $G$ acts as $F$-scalars on $V$, thus,
  $G$ is finite up to scalars.
  
  In case (I), we invoke the algebraic result proved in \cite{GorodskiSaturnino17}, saying that the orbit $G\cdot p$ of $G$ through the \emph{highest weight vector} has a \emph{focal radius} bounded
  from below by a universal constant. Then we combine this with a quotient version of the Klingenberg injectivity radius estimate (Proposition \ref{P:Klingenberg}) to finish the proof.

 The technically  more involved case (II) can be deduced from the technically much more complicated paper \cite{BGreen}.   Following this path one could dispose of Section \ref{S:super-reducible} and of Appendix \ref{A:l(S)}
below. We have decided  to keep  our proof of this result whose simple geometric idea we are going to explain now. The reader can  very well dispose of this idea  and take the shortcut explained in Remark \ref{rmk: new} below.

  We only explain the main idea of the proof of case (II),
  neglecting all difficulties arising from the presence of scalars, which force us to work with projective representations rather than actual representations.  Thus we assume that $G$ is a finite group and that it is a maximal subgroup of $\OO(n)$, in particular, the representation is of real type.
  
   We compare the diameter and the volume of the quotient $\Ss^n/G$.   The volume equals $\vol (\Ss^n)/|G|$. On the other hand, 
  %by a simple covering argument, or
   by the theorem of Bishop--Gromov, 
  the volume of $\Ss^n /|G|$ is bounded from above by $\Co\cdot r^n \cdot \vol (\Ss^n)$, where $\Co$ is a universal constant and 
   $r$ is  the diameter of $\Ss^n/G$. Thus, in order to obtain the conclusion, we only need to verify that $\log (|G|) /n$  has a universal upper bound (for all representations that we cannot rule out by other means).
   
   If the group $G$ is a finite  simple group, then the classification of such groups and existing lower bounds on the dimension of their representations provide us with the needed bound (with the only exception of the minimal representation of the alternating group, for which we already have the bound of  $\frac \pi 4$, \cite{Greenwald00}).  If the group $G$ is not simple, we consider a minimal normal subgroup $N$ of $G$, use the fact that this  normal subgroup must act irreducibly and that $G/N$ (again up to scalars) embeds into the group of outer automorphisms of $N$.  Since $N$ must be a power of a simple group, we again apply the classification of finite simple groups and obtain the required bound on  $\log (|G|) /n$.

   \begin{remark} \label{rmk: new}
   We are going to explain how case II follows directly from \cite{BGreen}.
   If $G^0=\{1\}$, then $G$ is finite and the main result of \cite{BGreen} states that 
   the diameter of $\Ss ^n /G$ is at least $ \epsilon _\textrm{finite}  (n)$ with $\lim _{n\to \infty} \epsilon _\textrm{finite} (n) =\pi /2 $.  
   
   If $G^0 = \U (1)$,  then  approximating $\U (1)$ by finite cyclic subgroups, we obtain an approximation of $G$ by finite
   subgroups. Therefore, also in this case we get from \cite{BGreen}  that the diameter of $\Ss ^n /G$ is at least $ \epsilon _\textrm{finite}  (n)$ with $\epsilon _\textrm{finite} (n)$ as above.
   
  Finally, if $G^0$ is  $\Sp (1)$ acting as quaternionic scalars, we can write 
   $G =G^0\cdot \Gamma$, where the finite group $\Gamma$ is the centralizer of $G^0$ in $G$. Then we consider a fixed finite subgroup $H$ of $\Sp (1)$ which acts irreducibly on
  $\Ss^3$, for instance the binary icosaedral group.  The main theorem of \cite{BGreen}
  implies that the diameter of $\Ss^n/ (H \cdot \Gamma)$ is bounded from below
  by $\epsilon _\textrm{finite} (n)$ as above.  However, in any $G^0$-orbit, which is a round sphere $\Ss ^3$,
  the corresponding  $H$-orbit is $s$-dense,  where $s<\pi /2$ is  the diameter of $\Ss ^3/H$.
  Then, we get $\epsilon _\textrm{finite} (n)- s$ as  a lower bound for the diameter of $\Ss^n /G$.
  By \cite{BGreen} this number is bounded away from $0$ for $n$ large enough.
   \end{remark}

   \begin{remark}
   		If one is interested in the case of connected Lie groups only, the proof can be considerably shortened. Indeed, by passing to a maximal non-transitive  connected closed group
   		and using Theorem~\ref{T:reflection} to discard polar actions
                (see section~\ref{S:polar} for this concept) we directly  arrive  at one of first three cases in
                Lemma~\ref{L:G^0}.
                Subsections~\ref{inj-rad} and~\ref{i-ii-iii} cover these cases and we obtain as a lower bound for $\epsilon$  half the focal radius of the orbit through the special point, hence $   \approx  \frac 1 {30}$ according to \cite{GorodskiSaturnino17}.
              \end{remark}

              Understanding the diameter of quotients of unit spheres
              also has a bearing on the global structure and
              classification of compact positively and non-negatively
              curved manifolds (compare~\cite{grove}). Indeed, the
              orbit space of such a manifold
              under the action of a compact Lie group of isometries is an
              Alexandrov space of 
              positive (resp.~non-negative) curvature,
              whose local geometry is controlled by its tangent cones which,
in turn, are determined by the associated isotropy representations. In this
sense, the Main Theorem says that ``Riemannian orbit spaces cannot
be arbitrarily singular''.

\subsection{Organization}
In Section \ref{S:prelim} we recall a couple of basic facts and definitions about real, complex and quaternionic representations, which are used throughout this article, as well as some known facts about diameter of quotients which we will use later.  Section \ref{S:polar} 
%contains Lemma \ref{L:super-reducible}, which 
concerns normal subgroups and reduces the proof of the Main Theorem to two cases, according to whether the identity component $G^0$ of the given group $G$ acts irreducibly, or as scalar multiplication. Section \ref{S:super-reducible} finishes the proof when $G^0$ acts as scalar multiplication, that is, when $G$ is essentially a finite group, while Section \ref{S:irreducible} deals with the case where $G^0$ acts irreducibly. Section \ref{S:generalizations} is devoted to the  proof of Theorem \ref{MT:finitepi1}. %about non-spherical quotients.

Finally, Appendices \ref{A:l(S)} and \ref{A:G^0} contain proofs of two technical but essentially known Lemmas needed in Sections \ref{S:super-reducible} and \ref{S:irreducible}, respectively.

\bigskip

\noindent\textbf{Acknowledgments.} We would like to thank Anton Petrunin for popularizing the question on Mathoverflow and, subsequently, Nik Weaver for providing a counterexample in infinite dimensions~\cite{We}.  We thank Jean-Paul Gauthier, Karsten Grove and an anonymous referee for helpful comments and remarks. We are grateful to Ben Green for pointing out close connections to his recent work. 

The first named author has been partially supported by
the CNPq grant 302882/ 2017-0 and the FAPESP project  	
2016/23746-6, the second named author by the DFG funded project SFB/TRR 191, the third named author by the DFG funded project no. 281071066, TRR 19 and the fourth named author by the DFG grant ME 4801/1-1 and the NSF grant DMS-2005373.

\section{Preliminaries}
\label{S:prelim}

\subsection{Representations of real, complex, and quaternionic types}
In this section we briefly collect a few definitions and basic facts about representations over $\R,\C,\HH$, of real, complex and quaternionic types that are used throughout the present article. A thorough treatment can be found in  \cite[Section 2.6]{BroeckertomDieck}.

Let $G$ be a compact group. A \emph{real representation} of $G$ is a group homomorphism $G\to \GL(U)$, where $U$ is a real vector space. It is called \emph{irreducible} when the only $G$-invariant real subspaces are $\{0\}$ and $U$. In this case, Schur's lemma implies that the algebra of all $G$-equivariant endomorphisms of $U$ is a real associative division algebra, which, by Frobenius' Theorem, must be isomorphic to $\R$, $\C$, or $\HH$. This representation is then called of \emph{real}, \emph{complex}, or \emph{quaternionic type}, respectively.

A \emph{complex representation} of $G$ is a group homomorphism $G\to \GL(V)$, where $V$ is a complex vector space, and it is called { \emph{irreducible} when the only $G$-invariant complex subspaces of $V$ are $\{0\}$, $V$. In this case, it is called of \emph{real type} (resp. \emph{quaternionic type}) when it admits a \emph{real structure} (resp. \emph{quaternionic structure}), that is, a $G$-equivariant conjugate-linear map $\epsilon:V\to V$ with $\epsilon^2=1$ (resp. $\epsilon^2=-1$). The representation $V$ is called of { \emph{complex type} when it admits neither a real nor a quaternionic structure, or equivalently, when $V$ is not isomorphic to the complex-conjugate representation $\bar{V}$.

If $U$ is a real irreducible representation of real type, then its complexification $V=\C\otimes_\R U$ (that is, the $G$-module obtained by extension of scalars) is a complex irreducible representation of real type. Conversely, given a complex irreducible representation of real type $V$, with real structure $\epsilon$, then the fixed-point set $U$ of $\epsilon$ is a real irreducible representation of real type, called the  \emph{real form} of $V$.

On the other hand, if $U$ is a real irreducible representation of complex (resp. quaternionic) type, then it is the \emph{realification} of an irreducible complex representation $V$ of complex (resp. quaternionic) type, that is, it is obtained from $V$ by restriction of scalars (from $\C$ to $\R$).

\subsection{Diameter of quotients}

 Here we collect some basic facts about the diameter of quotients.
% that will be used later.
%
 We start with a well-known result, whose  proof can be found, for instance,  in \cite[pages 75--76]{GorodskiLytchak14}.
\begin{lemma}
\label{L:reducibility}
Let $G\subset\OO(n)$. Then the diameter of $\Ss^{n-1}/G$ is equal to $\pi$ if and only $G$ fixes some non-zero vector. Otherwise the diameter is less than or equal to $\pi/2$, with equality precisely when the representation of $G$ on $\R^n$ is reducible.
\end{lemma}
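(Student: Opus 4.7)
The plan is to split the statement into three independent sub-claims: (a) the characterization of the maximal value $\pi$, (b) the upper bound $\pi/2$ when there is no fixed vector, and (c) the case of equality in that upper bound. Throughout, one may replace $G$ by its closure in $\OO(n)$, since this operation does not change orbit closures, hence does not change the quotient metric. So we may assume $G$ is compact and thus carries a Haar probability measure.

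For (a), recall that the quotient distance is $d([x],[y])=\inf_{g\in G}\arccos\langle x,gy\rangle$. Since all pairwise distances on $\Ss^{n-1}$ are at most $\pi$, $d([x],[y])=\pi$ forces $\langle x,gy\rangle=-1$, i.e.\ $gy=-x$, for every $g\in G$. This means the orbit $Gy$ is the single point $\{-x\}$, equivalently $-x$ (and hence $x$) is fixed by $G$. Conversely, a nonzero $G$-fixed unit vector $v$ gives $d([v],[-v])=\pi$.

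For (b), suppose $G$ has no nonzero fixed vector, and pick any $x,y\in\Ss^{n-1}$. Integrating over $G$ with respect to Haar measure yields a $G$-fixed vector $c=\int_G gy\,dg$, which must vanish. Hence the scalar average $\int_G\langle x,gy\rangle\,dg=\langle x,c\rangle=0$, so $\max_{g\in G}\langle x,gy\rangle\geq 0$, giving $d([x],[y])\leq\pi/2$. For the reducible case of (c), if $\R^n=V_1\oplus V_2$ is a nontrivial orthogonal $G$-invariant decomposition, then for unit vectors $v_i\in V_i$ one has $\langle v_1,gv_2\rangle=0$ for all $g$, so $d([v_1],[v_2])=\pi/2$, showing that the supremum is attained.

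The main point left is the converse direction of (c): assume $d([x],[y])=\pi/2$. Then the inequality above forces the nonnegative function $g\mapsto\max(0,\langle x,gy\rangle)$ to have vanishing integral, combined with continuity this gives $\langle x,gy\rangle=0$ for all $g\in G$. Equivalently $x$ is orthogonal to the $G$-invariant subspace $V:=\mathrm{span}(Gy)$. Since $y\neq 0$ we have $V\neq 0$, and since $x\in V^{\perp}$ with $x\neq 0$ we have $V\neq\R^n$. Hence $\R^n=V\oplus V^{\perp}$ is a nontrivial orthogonal decomposition into $G$-invariant subspaces, so the representation is reducible. The main (very mild) obstacle is just making sure the inf in the definition of $d$ is attained so that the inner-product inequalities can be read off pointwise; compactness of $G$ takes care of this.
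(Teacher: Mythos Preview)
Your proof is correct. The paper does not actually prove this lemma itself; it simply cites \cite{GorodskiLytchak14} for a proof of this well-known fact, and your Haar-averaging argument is precisely the standard one found there. One small remark on exposition: in the converse of (c), the detour through the function $g\mapsto\max(0,\langle x,gy\rangle)$ is unnecessary and slightly obscures the logic. The cleanest route is: $d([x],[y])=\pi/2$ gives $\sup_g\langle x,gy\rangle=0$, so $\langle x,gy\rangle\leq 0$ for all $g$; since you already showed $\int_G\langle x,gy\rangle\,dg=0$, continuity forces $\langle x,gy\rangle\equiv 0$. This is what your argument amounts to, but stating it directly avoids any ambiguity about what ``the inequality above'' refers to.
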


Next we turn to the behaviour of the diameter of the quotient with respect to inclusion of groups $K\subset G\subset \OO(n)$. A simple fact we will use frequently is that $\diam(\Ss^{n-1}/K)\geq\diam(\Ss^{n-1}/G)$. In the opposite direction, there is the  following result, which appears as Lemma 3.13 in \cite{Greenwald00}, and allows one to replace a group with a finite index subgroup, as long as the index is controlled:
\begin{lemma}
\label{L:index}
Let $K\subset G\subset \OO(n)$ be closed subgroups, and assume $G/K$ is finite, with $k$ elements. Then 
\[ \diam (\Ss^{n-1}/G) \geq \frac{\diam (\Ss^{n-1}/K)}{2(k-1)}.\]
\end{lemma}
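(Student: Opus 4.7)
The plan is to reduce to a finite quotient and use a covering/chain argument. Setting $\Gamma = G/K$ (a finite group of order $k$) and $X = \Ss^{n-1}/K$ (a compact connected metric space with the quotient metric), $\Gamma$ acts isometrically on $X$ and $X/\Gamma$ is canonically identified with $\Ss^{n-1}/G$. Writing $d = \diam(X)$ and $D = \diam(X/\Gamma)$, the task is to show $d \leq 2(k-1)D$.

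First I would choose $\bar p, \bar q \in X$ with $d(\bar p, \bar q) = d$. The defining property of the quotient metric gives, for every $\bar x \in X$, some $\gamma \in \Gamma$ with $d(\bar x, \gamma \bar p) \leq D$, so the at-most-$k$ closed balls $\bar B(\gamma \bar p, D)$ cover $X$. Next I would form the graph $\mathcal G$ on the orbit $\Gamma \bar p$, with an edge between two vertices whose mutual distance is at most $2D$, and check that $\mathcal G$ is connected: if it split as a disjoint union $V_1 \sqcup V_2$ with no edges between them, the corresponding unions $\bigcup_{\gamma \bar p \in V_j} \bar B(\gamma \bar p, D)$ would form two disjoint nonempty closed subsets of $X$ covering it, contradicting the connectedness of $X = \Ss^{n-1}/K$. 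Since $\mathcal G$ is connected on at most $k$ vertices, its graph-diameter is at most $k-1$.

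With these ingredients, let $\bar p_* \in \Gamma \bar p$ be an orbit point nearest to $\bar q$, so $d(\bar p_*, \bar q) \leq D$, and take a shortest chain $\bar p = \bar p_0, \ldots, \bar p_N = \bar p_*$ in $\mathcal G$, with $N \leq k-1$ and consecutive distances $\leq 2D$. Applying the triangle inequality along the chain produces $d(\bar p, \bar p_*) \leq 2(k-1)D$ and hence $d \leq 2(k-1)D + D$. To remove the extra $D$ and extract the stated constant $2(k-1)$, I would sharpen the endpoint step using the extremal property of $\bar p, \bar q$ together with the fact that $\bar p$ lies at the center, not the boundary, of its own ball $\bar B(\bar p, D)$, for instance by reorganising the chain so that the final leg to $\bar q$ is absorbed into the last edge via a convexity argument on the minimizing geodesic from $\bar p$ to $\bar q$.

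I expect this endpoint refinement to be the main technical obstacle. The elementary chain bound alone yields only $d/(2k-1)$; extracting the stated $d/(2(k-1))$ requires careful bookkeeping at the endpoints, exploiting both that $\bar p$ sits at the center of its own ball and that any chain of length exactly $k-1$ in $\mathcal G$ forces the cells along the geodesic to be traversed in a specific order, eliminating the loss of an extra $D$ in the final triangle step.
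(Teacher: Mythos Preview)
The paper does not prove this lemma; it is quoted from Greenwald's paper \cite{Greenwald00}, so there is no ``paper's proof'' to compare against. Your covering-and-chain strategy is the natural one and is essentially correct, but two points deserve comment.

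First, a small technical slip: you set $\Gamma = G/K$ and treat it as a group acting isometrically on $X = \Ss^{n-1}/K$. This requires $K$ to be normal in $G$, which the lemma does not assume. The repair is easy and does not affect the rest of your argument: work directly with the submetry $\pi: \Ss^{n-1}/K \to \Ss^{n-1}/G$, whose fibres have at most $k$ points; the $D$-balls around the fibre $\pi^{-1}(\pi(\bar p))$ still cover $X$, and your graph $\mathcal G$ on that fibre is still connected for the same reason. (In fact every application of this lemma in the present paper has $K$ normal, so your formulation would suffice here.)

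Second, and more substantively, you correctly identify that the straightforward chain argument yields only $d \leq (2k-1)D$, and you are candid that the improvement to $2(k-1)D$ is ``the main technical obstacle''. Your proposed endpoint refinement---absorbing the last leg via ``convexity'' or ``careful bookkeeping''---is not a proof; it is a hope. Nothing in what you wrote explains why the extra $D$ can actually be saved, and the vague invocation of extremality of $(\bar p,\bar q)$ does not do the work. So as written, your argument establishes $\diam(\Ss^{n-1}/G) \geq \diam(\Ss^{n-1}/K)/(2k-1)$, which is weaker than the stated bound but, it should be said, entirely sufficient for every use of the lemma in this paper: only the existence of \emph{some} constant depending on $k$ is ever needed. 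If you want the sharper constant, you will need a genuinely different idea at the endpoint (for $k=2$, for instance, a midpoint argument combined with the isometry $\sigma$ does give $d\leq 2D$; whether an analogous trick works for all $k$, and without normality, is what you would need to supply).
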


The existence of a dimension-dependent lower bound on the diameter of the orbit space was established in \cite[Theorem 4.3]{Greenwald00}:
\begin{theorem}[Greenwald]
\label{T:epsilon(n)}
For each $n\geq 3$, there exists $\epsilon(n)>0$ such that, for all $G<\OO(n)$ compact and non-transitive,  $\diam(\Ss^{n-1}/G)\geq \epsilon(n)$.
\end{theorem}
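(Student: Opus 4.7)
The plan is to argue by contradiction using Chabauty (Hausdorff) compactness of the space of closed subgroups of $\OO(n)$, together with the Montgomery--Samelson--Borel classification of transitive subgroups of $\OO(n)$ on $\Ss^{n-1}$. Suppose no such $\epsilon(n)$ exists; then there is a sequence of compact non-transitive subgroups $G_i \subset \OO(n)$ with $\diam(\Ss^{n-1}/G_i) \to 0$.

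First I would invoke the compactness of the space of closed subgroups of $\OO(n)$ in the Chabauty topology to pass to a convergent subsequence $G_i \to G_\infty$ with $G_\infty$ a closed subgroup of $\OO(n)$. The next step is to observe that, for each $p \in \Ss^{n-1}$, the orbits $G_i \cdot p$ Hausdorff-converge to $G_\infty \cdot p$, which yields the upper semicontinuity inequality
\[
\diam(\Ss^{n-1}/G_\infty) \,\leq\, \liminf_i \diam(\Ss^{n-1}/G_i) \,=\, 0.
\]
Therefore $G_\infty$ acts transitively on $\Ss^{n-1}$.

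Because $n \geq 3$, no finite group can act transitively on the positive-dimensional connected sphere $\Ss^{n-1}$, so $G_\infty$ is positive-dimensional, and a short connectedness argument shows that its identity component $G_\infty^0$ is already transitive. The Montgomery--Samelson--Borel classification then places $G_\infty^0$ on an explicit short list of non-abelian compact connected Lie subgroups of $\OO(n)$, such as $\SO(n)$, $\U(n/2)$, $\Sp(n/4) \cdot \Sp(1)$, $G_2 \subset \OO(7)$, $\operatorname{Spin}(7) \subset \OO(8)$, and $\operatorname{Spin}(9) \subset \OO(16)$.

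The main obstacle is the final step: producing a contradiction with the non-transitivity of the $G_i$. I would establish a rigidity statement for each of the subgroups on the above list, asserting that any closed subgroup of $\OO(n)$ sufficiently Chabauty-close to $G_\infty^0$ must contain a conjugate of $G_\infty^0$, and hence act transitively on $\Ss^{n-1}$. The hypothesis $n \geq 3$ enters precisely here: Jordan's theorem confines finite subgroups of $\OO(n)$ to bounded-index neighborhoods of the maximal torus $T^{\lfloor n/2 \rfloor}$, which has strictly smaller dimension than any group on the Montgomery--Samelson list as soon as $n \geq 3$; combined with the discreteness (up to conjugation) of closed connected subgroups of $\OO(n)$ in each fixed dimension, this forbids a sequence of ``smaller'' closed subgroups from Chabauty-approximating a transitive $G_\infty^0$. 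The same rigidity fails for $\Ss^1$, which explains the exclusion of $n = 2$: $\SO(2)$ is Chabauty-approximated by its arbitrarily large cyclic subgroups, all non-transitive. Once rigidity is established, $G_i$ must itself be transitive for large $i$, contradicting the standing hypothesis.
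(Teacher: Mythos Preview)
Your overall strategy---assume a sequence $G_i$ with $\diam(\Ss^{n-1}/G_i)\to 0$, extract a Chabauty limit $G_\infty$, and show $G_\infty$ is transitive---matches exactly the argument the paper gives for its generalization, Theorem~\ref{MT:finitepi1}. The divergence is entirely in the final step, where you must rule out a non-transitive sequence converging to a transitive limit.

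The paper's route is classification-free. It invokes the theorem of Montgomery--Zippin \cite{MontgomeryZippin42} to conclude that the $G_i$ are eventually conjugate \emph{into} $G_\infty$; then Lemma~\ref{L:semisimple} (using $\pi_1(\Ss^{n-1})$ finite for $n\geq 3$) shows that the semi-simple part $G'_\infty=[G_\infty^0,G_\infty^0]$ is still transitive; finally one appeals to the fact (tom~Dieck \cite[Ch.~IV, Prop.~3.7]{tomDieck}) that a compact semi-simple Lie group is never a Chabauty limit of proper closed subgroups. This yields the contradiction without ever listing the transitive groups.

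Your route instead invokes the Montgomery--Samelson--Borel list and then asserts a rigidity statement: any closed subgroup Chabauty-close to $G_\infty^0$ must contain a conjugate of it. This statement is correct for the semi-simple members of the list, but your proposed justification is incomplete. Jordan's theorem handles only \emph{finite} approximating subgroups, and ``discreteness of connected subgroups in each fixed dimension'' says nothing about a sequence whose identity components have strictly smaller dimension than $G_\infty^0$ while large finite quotients $G_i/G_i^0$ fill out the rest (the $n=2$ phenomenon you mention can in principle recur inside a larger group). You also never reduce to subgroups of $G_\infty$, which is precisely what Montgomery--Zippin provides and what makes the tom~Dieck criterion applicable. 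Finally, note that your list contains $\U(n/2)$ and $\Sp(n/4)\cdot\U(1)$, which are \emph{not} semi-simple, so even with the right tools you would still need the reduction to the semi-simple part (the paper's Lemma~\ref{L:semisimple}) before the rigidity argument goes through. In short: the skeleton is right, but the missing ingredients are Montgomery--Zippin and the semi-simplicity/tom~Dieck step, not Jordan and a case-check.
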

See Theorem \ref{MT:finitepi1} for a generalization of Theorem \ref{T:epsilon(n)}, with similar proof.

Another result of Greenwald useful to us can be found in \cite[Theorem 3.15]{Greenwald00} and \cite[Table 1]{Greenwald00}: 
\begin{theorem}[Greenwald]
\label{T:reflection}
If $n\geq 3$ and  $G<\OO(n)$ is a finite group generated by reflections, then $\diam(\Ss^{n-1}/G)\geq \frac{\pi}{8.1}$. Moreover, if $G$ is of classical type, that is, types A, B, C or D, then $\diam(\Ss^{n-1}/G)\geq \frac{\pi}{4}$.
\end{theorem}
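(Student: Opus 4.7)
My plan is to exploit the classification of finite Coxeter groups. Any finite reflection group $G<\OO(n)$ is a direct product, over an orthogonal decomposition $\R^n=V_1\oplus\cdots\oplus V_k\oplus V_0$ (with $V_0$ the fixed subspace), of irreducible finite reflection groups on the $V_i$. If $V_0\neq 0$ then Lemma~\ref{L:reducibility} gives $\diam(\Ss^{n-1}/G)=\pi$; if $k\geq 2$ then Lemma~\ref{L:reducibility} gives $\diam(\Ss^{n-1}/G)=\pi/2$. Both cases comfortably exceed $\pi/4$, so I may assume the action of $G$ on $\R^n$ is irreducible. Since the ambient sphere has dimension at least two, this means $G$ is one of the irreducible Coxeter groups $A_{n-1},B_n,C_n,D_n$ (classical) or $H_3,H_4,F_4,E_6,E_7,E_8$ (exceptional), acting in its natural representation on $\R^n$.

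For irreducible $G$ the quotient $\Ss^{n-1}/G$ is isometric to the closed spherical Weyl simplex $\Sigma=C\cap\Ss^{n-1}$, where $C$ is a fixed fundamental chamber. The vertices of $\Sigma$ are the normalized fundamental weights $\hat\omega_1,\ldots,\hat\omega_n$. Because $\Sigma$ is geodesically convex and lies in an open hemisphere, the distance function $d(\hat\omega_i,\cdot)$ is convex on $\Sigma$, so the diameter of $\Sigma$ is attained at a pair of vertices:
\[ \diam(\Sigma)=\max_{i\neq j}\arccos\langle\hat\omega_i,\hat\omega_j\rangle. \]
The inner products on the right are recorded, up to normalization by the lengths $|\omega_i|$, by the inverse of the Cartan matrix of $G$, which is tabulated for every simple type.

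For the classical types I would verify the stronger bound $\pi/4$ by direct coordinate calculation. For $A_{n-1}$ the standard realization yields $\langle\hat\omega_j,\hat\omega_k\rangle=\sqrt{j(n-k)/(k(n-j))}$ for $j\leq k$, so the pair $(j,k)=(1,n-1)$ gives angular distance $\arccos(1/(n-1))\geq\pi/3$. Analogous explicit computations for $B_n,C_n,D_n$, using the standard $\pm e_i\pm e_j$ realizations, produce in every rank a pair of normalized fundamental weights at spherical distance at least $\pi/4$.

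For the exceptional types $H_3,H_4,F_4,E_6,E_7,E_8$ the plan is to insert the explicit inverse Cartan matrices into the formula above and verify numerically, case by case, that some pair of fundamental weights is separated by spherical distance at least $\pi/8.1$. The main obstacle is the case $H_4$: its inverse Cartan matrix has entries involving the golden ratio, and the numerical bound $\pi/8.1$ in the statement is presumably close to sharp there. A uniform conceptual bound (say, via the angle between the highest root and a chamber wall) would be preferable, but the only robust route I see still rests on a finite case check enabled by the Coxeter classification.
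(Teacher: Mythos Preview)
The paper does not prove Theorem~\ref{T:reflection} at all: it is quoted verbatim from Greenwald's paper \cite[Theorem~3.15]{Greenwald00} and used as a black box. So there is no in-paper argument to compare your proposal against.

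That said, your outline is the natural one and is essentially what Greenwald does: reduce to the irreducible case via Lemma~\ref{L:reducibility}, identify the quotient with the closed spherical Weyl simplex, observe that its diameter is realized between two vertices (normalized fundamental weights), and then run through the Coxeter classification computing those vertex distances from the inverse Cartan matrix. Your treatment of the classical series is on the right track, and the exceptional cases really are handled by a finite numerical check; $H_4$ is indeed the case that forces the constant down near $\pi/8.1$.

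One small point worth flagging: you silently impose ``ambient sphere of dimension at least two'', i.e.\ $n\geq 3$. This is necessary---the dihedral groups $I_2(m)<\OO(2)$ give $\diam(\Ss^1/I_2(m))=\pi/m$, which violates the bound for large $m$---and it is the standing assumption throughout the paper (cf.\ the Main Theorem and Theorem~\ref{T:epsilon(n)}), but it is not written into the statement of Theorem~\ref{T:reflection} itself. With that hypothesis in place your list of irreducible types is complete and the plan goes through.
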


\section{Controlling normal subgroups via polar representations}
\label{S:polar}

We will need
% to control the behaviour of normal subgroups under the assumption that the diameter of the quotient is small (Lemma \ref{L:super-reducible} below), but to prove that we need 
the following technical observation:
\begin{lemma}
\label{L:normalizer}
Let $N_0\subset \OO(l)$ (respectively $\U(l)$, $\Sp(l)$)  be irreducible of real (respectively complex, quaternionic) type, and let $N=\Delta N_0\subset \OO(kl)$ (respectively $\U(kl)$, $\Sp(kl)$) be the diagonal group, seen as acting by left multiplication on the vector space $V$ of $l\times k$ matrices with entries in $\R$ (respectively $\C$, $\HH$). Let $\OO(k)$ (respectively $\U(k)$, $\Sp(k)$) act by right multiplication, and denote by $K$ the image in $\OO(V)$ of $\OO(k)\times \OO(l)$ (respectively of the group generated by $\U(k)\times\U(l)$ and complex conjugation, $\Sp(k)\times\Sp(l)$). Then the normalizer $N_{\OO(V)}(N)$ is contained in $K$.
\end{lemma}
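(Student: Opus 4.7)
The plan is to identify the real-linear commutant $\operatorname{End}_{\R,N}(V)$, apply Skolem--Noether to the $\R$-algebra automorphism of this commutant induced by conjugation by $g\in N_{\OO(V)}(N)$, translate the result back to an explicit form for $g$, and use $\R$-orthogonality to finish. Since $N_0$ acts irreducibly on $F^l$ of the specified type (with $F=\R,\C,\HH$), Schur's lemma yields $\operatorname{End}_{\R,N_0}(F^l)=F$. Viewing $V=M_{l,k}(F)$ as $k$ columns, each a copy of $F^l$ on which $N=\Delta N_0$ acts diagonally, we obtain $\operatorname{End}_{\R,N}(V)=M_k(F)$, acting on $V$ by right multiplication $R_B(X)=XB$. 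A parallel computation (using $\HH\otimes_\R\HH\cong M_4(\R)$ in the quaternionic case to count real dimensions) identifies the commutant of $\{R_B\}$ in $\operatorname{End}_\R(V)$ as left multiplication $L_{M_l(F)}$.

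Now let $g\in N_{\OO(V)}(N)$. Conjugation by $g$ preserves the commutant and induces an $\R$-algebra automorphism $\phi$ of $M_k(F)$ via $gR_Bg^{-1}=R_{\phi(B)}$. When $F=\R$ or $\HH$ the algebra $M_k(F)$ is central simple over $\R$, so Skolem--Noether forces $\phi(B)=CBC^{-1}$ with $C\in\GL_k(F)$. When $F=\C$ the center equals $\C$, so $\phi|_\C$ is either the identity or complex conjugation, giving $\phi(B)=CBC^{-1}$ or $\phi(B)=C\bar BC^{-1}$ respectively; in the latter sub-case I will use that entrywise complex conjugation $J\colon V\to V$ is $\R$-orthogonal and satisfies $JR_BJ^{-1}=R_{\bar B}$, which after composing with $J$ reduces matters to the $\C$-linear case up to an explicit factor of $J$.

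Rearranging $gR_Bg^{-1}=R_{C^{-1}}R_BR_C$ (right multiplication reverses composition) shows that $R_Cg$ commutes with every $R_B$, so $R_Cg\in L_{M_l(F)}$; writing $R_Cg=L_D$ yields $g(X)=DXC^{-1}$, with an additional entrywise conjugation inserted in the complex conjugate-linear sub-case. A direct calculation using $\langle X,Y\rangle=\mathrm{Re}\,\mathrm{tr}(X^*Y)$ shows that $g^*g=I$ is equivalent to $D^*D=\lambda I_l$ and $C^*C=\lambda I_k$ for a common $\lambda>0$; absorbing $\sqrt\lambda$ places $D$ and $C$ in $\OO(l),\U(l),\Sp(l)$ and $\OO(k),\U(k),\Sp(k)$ respectively, which gives $g\in K$.

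The main obstacle is the complex case, where the extra $J$ must be tracked through and matched with the $K$ of the statement (which by construction includes $J$); the quaternionic case requires care because $B\mapsto R_B$ is an anti-homomorphism, but the Schur/Skolem--Noether inputs and the final orthogonality computation are otherwise standard.
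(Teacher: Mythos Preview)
Your proof is correct and shares the paper's high-level strategy---conjugation by $g$ preserves the commutant of $N$, the induced automorphism is essentially inner, and the correcting factor lands in the double commutant---but the execution uses different machinery. The paper stays entirely inside $\OO(V)$: it identifies the \emph{group} centralizer $C_{\OO(V)}(N)$ as $\OO(k)$ (resp.\ $\U(k)$, $\Sp(k)$) via Schur, invokes the classification of automorphisms of the compact Lie groups $\SO(k)$, $\SU(k)$, $\Sp(k)$ to produce $g'$ with $\mathrm{Ad}_g=\mathrm{Ad}_{g'}$ on the centralizer, and applies Schur once more to $g^{-1}g'$. You instead pass to the \emph{algebra} commutant $M_k(F)\subset\operatorname{End}_\R(V)$, replace the Lie-theoretic automorphism facts by Skolem--Noether, work in $\GL$ to obtain $g(X)=DXC^{-1}$, and recover orthogonality at the end via the scalar computation $D^*D=\lambda I$, $C^*C=\lambda I$. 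Your route is more self-contained (Skolem--Noether is one uniform statement, whereas the paper treats the three classical groups separately and implicitly relies on small-rank automorphism facts), at the price of the extra scalar-absorption step; the paper avoids that step by never leaving the orthogonal group.
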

\begin{proof}
Let $g\in N_{\OO(V)}(N)$. Since $g$ normalizes $N$, it also normalizes the centralizer $C_{\OO(V)}(N)$ of $N$, which, by Schur's lemma, equals $\OO(k)$ (respectively $\U(k)$, $\Sp(k)$). Therefore $g$ also normalizes $\SO(k)$ (respectively $\SU(k)$). But every automorphism of $\SO(k)$ is given by conjugation with some element of $\OO(k)$, every automorphism of $\SU(k)$ is inner, or inner composed with complex conjugation, and every automorphism of $\Sp(k)$ is inner. Therefore there exists $g'\in\OO(k)$ (respectively $\SU(k)\cup c\SU(k)$, $\Sp(k)$, where $c$ denotes complex conjugation) such that conjugation by $g$ and $g'$ coincide on $\SO(k)$ (respectively $\SU(k)$, $\Sp(k)$). In other words, $g^{-1}g'$ centralizes  $\SO(k)$ (respectively $\SU(k)$, $\Sp(k)$). By Schur's Lemma, $g^{-1}g'$ belongs to $\OO(l)$ (respectively $\U(l)$, $\Sp(l)$), and therefore $g\in K$.
\end{proof}

The next lemma is analogous to Lemma \ref{L:reducibility} in that it provides algebraic information about a representation when the diameter of the quotient is assumed to be small. In the proof we use the concept of a \emph{polar representation}, which is defined as a representation admitting a \emph{section}, that is a vector subspace which meets all of the orbits orthogonally. The quotient space of the representation is isometric to the quotient of any section by its so-called \emph{generalized Weyl group} (polar group). The latter is defined as the quotient of the subgroup which leaves the section invariant by the subgroup which fixes the section pointwise, and it is always finite. Moreover, the generalized Weyl group of a polar representation of a compact connected group is a finite reflection group. For a detailed account on polar representations and their generalized Weyl groups we refer to \cite{PT:87}.

In order to make the statement of the lemma more convenient, we make the following definition, which corresponds to the case $l=1$ in the notation of Lemma~\ref{L:normalizer}.
\begin{definition}
\label{D:super-reducible}
We will call a subgroup $N\subset \OO(V)$ \emph{super-reducible} if, as an $N$-representation, $V=W^k$, where $W$ is irreducible with $\dim_F W=1$, where $F=\R, \C$, or $\HH$ is the type of $W$.
\end{definition}

In the following we denote the symmetric group on $k$ letters by $\Sigma_k$.

\begin{lemma}
\label{L:super-reducible}
Let $G\subset \OO(n)$ be a closed subgroup, and assume $\diam (\Ss^{n-1}/G )<\pi/4$. Then, every normal subgroup $N\subset G$ is either irreducible or super-reducible.
\end{lemma}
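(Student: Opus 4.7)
The plan is to assume $N$ is reducible and prove it must be super-reducible. Decompose $V = \bigoplus_{i=1}^r V_i$ into $N$-isotypes, where $V_i \cong W_i^{k_i}$ for pairwise inequivalent irreducible $N$-modules $W_i$. Since $N$ is normal in $G$, conjugation by $G$ permutes the isotypes $V_i$. First I would observe that $G$ must act irreducibly on $V$; otherwise Lemma~\ref{L:reducibility} would give $\diam(\Ss^{n-1}/G) \geq \pi/2$, contradicting the hypothesis. Consequently, $G$ permutes the $V_i$ transitively.

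To rule out the case $r \geq 2$, I would use a short orthogonality argument. Pick a unit vector $v_1 \in V_1$ and $g \in G$ with $gV_1 = V_2$, so $v_2 := gv_1 \in V_2$ is orthogonal to $v_1$. Form the unit vector $p = (v_1 + v_2)/\sqrt{2}$. For any $h \in G$, the vector $hv_1$ lies in a single isotype $V_j$, so $\langle p, hv_1 \rangle$ vanishes unless $j \in \{1,2\}$, in which case $|\langle p, hv_1 \rangle| \leq 1/\sqrt{2}$. Therefore the quotient distance satisfies $d([p],[v_1]) \geq \arccos(1/\sqrt{2}) = \pi/4$, contradicting $\diam(\Ss^{n-1}/G) < \pi/4$. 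Hence $r = 1$ and $V \cong W^k$ as an $N$-module.

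Since $N$ is reducible, $k \geq 2$; it remains to show $l := \dim_F W = 1$, where $F = \mathrm{End}_N(W) \in \{\R,\C,\HH\}$. Suppose for contradiction $l \geq 2$. Writing $N = \Delta N_0$ with $N_0$ irreducible of the appropriate type on $F^l$, Lemma~\ref{L:normalizer} places $G$ inside the normalizer $K$, the image in $\OO(V)$ of $\OO(k) \times \OO(l)$ (or the complex/quaternionic analogue). The representation of $K$ on $V$ is the isotropy representation of the corresponding (real, complex, or quaternionic) Grassmannian, hence polar, with a section of dimension $m = \min(k,l) \geq 2$ and generalized Weyl group of classical type $B$, $C$, or $D$. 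Since the quotient of a polar representation equals the section modulo the Weyl group, Theorem~\ref{T:reflection} yields $\diam(\Ss^{n-1}/K) \geq \pi/4$, and hence $\diam(\Ss^{n-1}/G) \geq \diam(\Ss^{n-1}/K) \geq \pi/4$, contradiction. Thus $l = 1$ and $N$ is super-reducible.

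The main obstacle is the final step: after invoking Lemma~\ref{L:normalizer} one must correctly identify the $K$-action as the isotropy representation of a Grassmannian and verify that the generalized Weyl group is classical (type $B$, $C$ or $D$) so that Theorem~\ref{T:reflection} applies uniformly in the real, complex and quaternionic cases. The orthogonality argument ruling out multiple isotypes is essentially immediate, as is the reduction to $G$-irreducibility.
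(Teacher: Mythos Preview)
Your argument is correct. The second half (ruling out $l\geq 2$ via Lemma~\ref{L:normalizer}, identifying $K$ as polar with classical generalized Weyl group, and invoking Theorem~\ref{T:reflection}) is exactly what the paper does.

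The genuine difference lies in how you rule out several isotypical components. The paper observes that $G\subset \Sigma_k\ltimes \OO(d)^k$, notes this is polar with Weyl group $\Sigma_k\ltimes\{\pm1\}^k$ of type $B_k$, and again applies Theorem~\ref{T:reflection} to get $\diam\geq\pi/4$. Your argument is more elementary: you exhibit two explicit points $[p]$ and $[v_1]$ in the quotient and bound their distance from below by the direct inner-product estimate $|\langle p,hv_1\rangle|\leq 1/\sqrt2$, using only that $hv_1$ lies in a single isotype. This bypasses polar representations and reflection-group bounds entirely for this step. The paper's approach is more uniform in that both steps invoke the same machinery (polar actions plus Theorem~\ref{T:reflection}), whereas your approach isolates the first step as a bare-hands distance estimate. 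Either way the constant $\pi/4$ comes out the same; your version has the advantage of being self-contained for the isotypical part, while the paper's version makes the structural reason (a hidden type-$B$ Weyl chamber) more visible.

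One minor point of phrasing: saying ``conjugation by $G$ permutes the isotypes'' is slightly loose; what you mean and use is that the $G$-action on $V$ permutes the $N$-isotypical summands, which follows from normality of $N$ exactly as in the paper.
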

\begin{proof}
First we claim that, as an $N$-representation, $\R^n$ has one isotypical component. Let $\R^n=V_1\oplus \cdots \oplus V_k$ be the decomposition into isotypical components, and assume to the contrary that $k>1$. Since $N$ is normal in $G$, any $g\in G$ takes $N$-invariant subspaces to $N$-invariant subspaces, and hence $N$-irreducible subspaces to $N$-irreducible subspaces. But any $N$-irreducible subspace $W$ must be contained in some $V_i$, by Schur's lemma. Thus, if $W\subset V_i$ is $N$-irreducible, and $g\in G$, there exists $j$ such that $gW\subset V_j$. Moreover, the set of $N$-irreducible subspaces of $V_i$ is connected, so that $j$ does not depend on the choice of $W\subset V_i$. So $gV_i\subset V_j$, and, applying the same argument to $g^{-1}$, it follows that $g V_i = V_j$. Thus we obtain a group homomorphism $\phi:G\to \Sigma_k$ such that $g V_i= V_{\phi(g)(i)}$ for all $i$. Since $G$ is irreducible, this action of $G$ on $\{1, \ldots, k\}$ is transitive, and, in particular, all $V_i$ have the same dimension $d$. Therefore  $G\subset \Sigma_k \ltimes \OO(d)^k$. The group $\Sigma_k \ltimes \OO(d)^k$ is polar and the quotient $\Ss^{n-1}/(\Sigma_k \ltimes \OO(d)^k)$ is isometric to the quotient of $\Ss^{k-1}$ by the  Weyl group $\Sigma_k\ltimes \{\pm 1\}^k$, which, by Theorem \ref{T:reflection}, has diameter at least $\pi/4$. Thus $\diam (\Ss^{n-1}/G)\geq \pi/4$, contradicting our hypothesis, and finishing the proof that $\R^n$ has only one $N$-isotypical component.

This puts us in the situation of Lemma \ref{L:normalizer}, and, following the notation there, $G$ is contained in $K$, which is the image in $\OO(n)$ of $\OO(k)\times \OO(l)$ (respectively of the group generated by $\U(k)\times\U(l)$ and complex conjugation, $\Sp(k)\times\Sp(l)$). If both $k,l$ are larger than one, the group $K$ is polar, non-transitive, and by direct computations the associated generalized Weyl group is a finite reflection group of classical type. Thus Theorem \ref{T:reflection} yields  $\diam (\Ss^{n-1}/G)\geq \pi/4$,
a contradiction. Therefore, either $k=1$, that is, $N$ is irreducible, or $l=1$, that is, $N$ is super-reducible.
\end{proof}

Let $G\subset \OO(n)$ be a compact subgroup with identity component $G^0$, and assume $\diam(S^{n-1}/G)<\pi/4$. Since $G^0$ is a normal subgroup of $G$, we may apply Lemma \ref{L:super-reducible} above to conclude that $G^0$ is either super-reducible, or irreducible. Thus the proof of the Main Theorem reduces to these two cases, which we will deal with separately in the next two sections.

\section{Case where $G^0$ is super-reducible}
\label{S:super-reducible}

\subsection{Finite simple groups and projective representations}
In this subsection we collect some facts about the projective representations and the automorphism groups of powers $S^r$ of a finite simple group $S$. For more details on projective representations of finite groups we refer to \cite{Karpilovsky}.

Recall that an $n$-dimensional (complex) \emph{projective representation} of a group $G$ is a group homomorphism $G\to \PGL(n,\C)$. If this homomorphism can be lifted to a group homomorphism $\rho:G\to\GL(n,\C)$, the representation is called \emph{linear}. In general, it can be lifted to a map $\rho:G\to\GL(n,\C)$ which is a group homomorphism only up to scalar multiplication. In other words, there is a map $\alpha:G\times G\to \C^\times$ such that $\rho(1)=1$, and $\rho(xy)=\alpha(x,y)\rho(x)\rho(y)$ for all $x,y\in G$. Such a map $\rho$ is called an $\alpha$-representation. The group axioms imply that $\alpha$ is a \emph{cocycle} (or \emph{Schur multiplier}), that is, it satisfies $\alpha(x,1)=\alpha(1,x)=1$ and $\alpha(x,y)\alpha(xy,z)=\alpha(y,z)\alpha(x,yz)$, for all $x,y,z\in G$. The set of all cocycles is called $Z^2(G,\C^\times)$, and it forms an Abelian group under pointwise multiplication. Moreover, one defines the subgroup $B^2(G,\C^\times)$ of \emph{coboundaries}, and the cohomology group $H^2(G,\C^\times)$ as the quotient. Two lifts of the same projective representation have cohomologous Schur multipliers, and a projective representation is linear if and only if the associated cohomology class vanishes. Let $l(G)$ denote the smallest dimension of a faithful irreducible projective representation of $G$ (if they exist, which is the case for a non-Abelian, simple group $G$).

\begin{lemma}
\label{L:l(S)}
There exists a constant $\Co$ such that, for every finite simple group $S$ that is not cyclic or alternating, one has
\[ \frac{\log|S|}{l(S)} \leq \Co. \]
\end{lemma}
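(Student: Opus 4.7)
The plan is to invoke the Classification of Finite Simple Groups (CFSG). Under CFSG, every non-abelian finite simple group is either an alternating group (excluded by hypothesis), one of the $26$ sporadic simple groups, or a finite simple group of Lie type. I would handle the sporadic groups trivially: they form a finite list, so for each such $S$ the positive number $\log|S|/l(S)$ is finite, and the maximum over this finite list makes an admissible contribution to $\Co$.

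The main case is that of Lie type. Here I would invoke the Landazuri--Seitz theorem (and its subsequent refinements by Kleidman, Liebeck, Seitz, Tiep and others), which gives explicit lower bounds for the minimum degree $l(S)$ of a non-trivial faithful projective representation of a finite simple group of Lie type, in terms of the defining field size $q$ and the Lie rank $r$. The typical shape is $l(S)\geq c_1 q^{a(r)}$ with $a(r)$ at least linear in $r$; for instance for $\PSL_n(q)$ one has $l(S)\geq (q^n-q)/(q-1)$ in the generic case, and analogous bounds hold for each classical and exceptional family. Meanwhile the standard order formulas give
\[ \log|S| \;\leq\; c_2\, N(r)\,\log q, \]
where $N(r)$ is the number of positive roots of the ambient algebraic group, which is $O(r^2)$ for classical groups and universally bounded for exceptional ones. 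Dividing, the ratio $\log|S|/l(S)$ is of order $N(r)\log q / q^{a(r)}$ up to constants, which is uniformly bounded as $q\geq 2$ and $r\geq 1$ range over the allowed pairs.

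The main obstacle is not conceptual but bookkeeping. The Landazuri--Seitz bounds come with a short list of small exceptions --- notably $\PSL_2(q)$ for small $q$, $\PSL_3(2)$, $\PSU_4(2)$, and a few others --- where the generic estimate must be stated separately. I would absorb these by noting that each exceptional family is finite, or falls under a cleanly stated alternative bound such as $l(\PSL_2(q))\geq (q-1)/2$, which already forces $\log|S|/l(S) = O(\log q /q)$. Either way, only finitely many values of $\log|S|/l(S)$ arise outside the generic regime, and these can be absorbed into $\Co$ exactly as for the sporadic groups. Thus the entire argument reduces to CFSG plus the Landazuri--Seitz-type lower bounds, together with a maximum over a finite exceptional set.
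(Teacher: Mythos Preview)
Your proposal is correct and follows essentially the same approach as the paper: invoke CFSG, dispose of the sporadic groups as a finite list, and for each Lie-type family use the Landazuri--Seitz lower bounds on $l(S)$ against the standard order formulas to see that $\log|S|/l(S)$ is bounded (indeed tends to zero), absorbing the finitely many small exceptions into the constant. The only difference is presentational: the paper carries out the verification explicitly through all sixteen Lie-type families, whereas you summarize the mechanism uniformly via $l(S)\gtrsim q^{a(r)}$ with $a(r)$ linear in the rank and $\log|S|=O(r^2\log q)$.
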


See Appendix \ref{A:l(S)} for the proof, which consist of a case-by-case verification following the classification of the finite simple groups and their representations.

\begin{lemma}
\label{L:l(S)alternating}
Let $A_n$ denote the alternating group in $n$ letters. Then, for $n\geq 12$, the smallest dimension $l(A_n)$ of an irreducible faithful projective complex representation is $n-1$, uniquely achieved by the standard permutation representation on $\C^{n-1}$, and the second smallest dimension is at least $n(n-3)/4$.
\end{lemma}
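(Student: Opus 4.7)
The plan is to exploit that for $n \geq 8$ the Schur multiplier $H^2(A_n, \C^\times)$ equals $\Z/2$, so irreducible complex projective representations of $A_n$ correspond bijectively to irreducible linear representations of the double cover $\tilde A_n = 2.A_n$. Since $A_n$ is simple, ``projectively faithful'' amounts to ``nontrivial'' on $\tilde A_n$ modulo the scalar subgroup. The linear irreducibles of $\tilde A_n$ split into two families: \emph{ordinary} representations, on which the central involution acts trivially and which therefore factor through $A_n$, and \emph{spin} representations, on which it acts as $-1$. I would estimate the smallest two dimensions within each family and then take the minimum.

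For the ordinary family I would use the standard classification of irreducible representations of $S_n$ by partitions $\lambda \vdash n$, together with the branching rule that $V^\lambda|_{A_n}$ is irreducible unless $\lambda = \lambda^T$, in which case it splits as a sum of two irreducibles of equal dimension. I would then invoke Rasala's theorem on minimal degrees of $S_n$: for $n$ sufficiently large the smallest dimensions arise, in order, from the partitions $(n)$ and $(1^n)$ (both of dimension $1$), $(n-1,1)$ and $(2,1^{n-2})$ (both of dimension $n-1$), $(n-2,2)$ of dimension $n(n-3)/2$, and $(n-2,1,1)$ of dimension $(n-1)(n-2)/2$, while every other partition gives strictly larger dimension. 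None of these partitions is self-conjugate for $n \geq 5$, so upon descent to $A_n$ the first two collapse to the trivial representation, the next two coalesce into the standard permutation representation on $\C^{n-1}$, and the rest remain irreducible of the same dimension. Consequently among faithful ordinary projective irreducibles the smallest dimension is $n-1$, uniquely achieved by the standard representation, and every other has dimension at least $n(n-3)/2$.

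For the spin family I would invoke Schur's classical theory: the spin irreducibles of $\tilde S_n$ are indexed by strict partitions of $n$, and the smallest one (the basic spin representation attached to $\lambda=(n)$) has dimension $2^{\lfloor (n-1)/2 \rfloor}$. Its restriction to $\tilde A_n$ is either irreducible or splits into two equidimensional pieces, depending on the parity of $n$, so the smallest spin dimension of $\tilde A_n$ is at least $2^{\lfloor (n-2)/2 \rfloor}$. For $n \geq 12$ this exponential quantity exceeds the quadratic bound $n(n-3)/4$, with the finitely many small cases verified directly and the asymptotic comparison being trivial. Combined with the ordinary case, the second smallest faithful projective irreducible dimension of $A_n$ is at least $n(n-3)/4$, as claimed.

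The main obstacle is the invocation of Rasala's theorem: ruling out every other partition (notably the hooks $(n-k,1^k)$ for $3 \leq k \leq n-3$ and two-row partitions $(n-k,k)$ with $k \geq 3$) from producing a representation of small dimension requires a careful analysis via the hook length formula which is classical but not a one-line argument. A secondary subtlety is that the comparison between $2^{\lfloor (n-2)/2 \rfloor}$ and $n(n-3)/4$ is tight near $n=12$, so the base cases must be handled by direct computation rather than by asymptotic inequality manipulation.
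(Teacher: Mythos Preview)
Your proposal is correct and follows essentially the same approach as the paper: split the projective irreducibles of $A_n$ into linear representations (handled via Rasala's theorem on the minimal degrees of $\Sigma_n$ together with the restriction rule to $A_n$) and genuine spin representations (handled by the lower bound $2^{\lfloor(n-2)/2\rfloor}$ for the basic spin of $\tilde A_n$, which dominates $n(n-3)/4$ for $n\geq 12$). The paper cites Kleshchev--Tiep for the spin bound and Fulton--Harris plus Rasala for the linear case, exactly as you outline; your discussion of self-conjugate partitions is a slightly more explicit version of the paper's appeal to the $\Sigma_n\downarrow A_n$ branching rule.
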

\begin{proof}
Since $A_n$ is simple, every non-trivial representation is faithful. $A_n$ has exactly two cohomology classes of Schur multipliers \cite{Schur11}. Denoting by $\alpha$ the non-trivial Schur multiplier, the smallest dimension of an irreducible $\alpha$-representation is $2^{\left \lfloor{(n-2)/2}\right \rfloor }$, see \cite[page 1774]{KleshchevTiep12}. Since $n\geq 12$, this is larger than  $n(n-3)/4$, so it suffices to consider linear representations.

Every irreducible (linear) representation of $A_n$  is either the restriction of an irreducible representation of $\Sigma_n$, or a summand, with half the dimension, of such a restriction --- see \cite[page 64, Prop. 5.1]{FultonHarris}. By \cite[Result 2]{Rasala77}, when $n\geq 9$, the third smallest dimension of an irreducible representation of $\Sigma_n$ (after $1$ and $n-1$) is $n(n-3)/2$, completing the proof.
\end{proof}

\begin{lemma}
\label{L:l(S^r)}
Let $S$ be a finite simple group. If $S$ is non-Abelian, then $l(S^r)=l(S)^r$. If $S$ is Abelian, that is, $S\simeq\Z/p$ for a prime $p$, then $l(S^r)=p^{r/2}$ if $r$ is even, and $S^r$ has no complex projective faithful irreducible representations if $r$ is odd.
\end{lemma}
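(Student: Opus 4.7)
My plan is to treat the two cases --- $S$ non-Abelian simple and $S \cong \Z/p$ --- separately, obtaining upper bounds by explicit construction and lower bounds by classifying irreducible projective representations of $S^r$. For $S$ non-Abelian, the external tensor product $\rho \boxtimes \cdots \boxtimes \rho$ of $r$ copies of a faithful irreducible projective representation of $S$ of dimension $l(S)$ is itself a faithful irreducible projective representation of $S^r$, giving $l(S^r) \leq l(S)^r$. For $S \cong \Z/p$ with $r = 2m$, a nondegenerate symplectic form on $\FF_p^{2m}$ yields, via the Stone--von Neumann/Heisenberg construction, a faithful irreducible projective representation of dimension $p^m$. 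What remains is to establish the matching lower bounds, and the nonexistence assertion when $S \cong \Z/p$ and $r$ is odd.

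For the non-Abelian case, I would use that $S$ is perfect to obtain a K\"unneth-type isomorphism $H^2(S^r, \C^\times) \cong H^2(S, \C^\times)^r$, the cross terms vanishing because $S^{\text{ab}} = 1$. Equivalently, the Schur cover of $S^r$ is $\tilde S^r$, and every projective representation of $S^r$ lifts to a linear representation of $\tilde S^r$. The classical factorization of irreducibles of a direct product writes any irreducible such lift as an external tensor product $\tilde\rho_1 \boxtimes \cdots \boxtimes \tilde\rho_r$, which descends to $\rho_1 \boxtimes \cdots \boxtimes \rho_r$ with each $\rho_i$ an irreducible projective representation of $S$. Uniqueness of tensor factorization up to scalars then gives that $(s_1, \ldots, s_r) \in S^r$ lies in the projective kernel if and only if every $s_i$ lies in the projective kernel of $\rho_i$. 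Simplicity of $S$ makes each such kernel either trivial or all of $S$, so faithfulness of $\rho_1 \boxtimes \cdots \boxtimes \rho_r$ is equivalent to faithfulness of every $\rho_i$, giving $\dim \geq l(S)^r$.

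For $V = (\Z/p)^r$, I would classify irreducible projective representations by identifying the class of a cocycle $\alpha \in Z^2(V, \C^\times)$ with its skew-symmetrization $\omega(x,y) = \alpha(x,y) \alpha(y,x)^{-1}$, an alternating bilinear form $V \times V \to \mu_p$; this bijection parameterizes $H^2(V, \C^\times)$. Setting $R = \operatorname{rad}(\omega)$ of dimension $d$, the induced form on $V/R$ is nondegenerate symplectic (forcing $r - d$ even), and the irreducible $\alpha$-representations arise as a Heisenberg representation of $V/R$ twisted by a character of $R$, each of dimension $p^{(r-d)/2}$. Since $\rho(v) \rho(w) = \omega(v, w) \rho(w) \rho(v)$ in any $\alpha$-representation, an element $v$ maps to a scalar in $\PGL$ if and only if $\omega(v, \cdot) \equiv 1$, that is, $v \in R$ (the harder direction following from Schur's lemma); hence the projective kernel equals $R$. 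Projective faithfulness thus forces $\omega$ nondegenerate, which happens iff $r$ is even, in which case the minimal dimension is $p^{r/2}$. The main delicate point is the tensor factorization of irreducible projective representations of $S^r$ in the non-Abelian case, which is where perfectness of $S$ is used essentially.
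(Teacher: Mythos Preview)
Your proof is correct and follows the same route as the paper's: in the non-Abelian case both reduce to the tensor factorization of irreducible projective representations of $S^r$ via the identification $M(S^r)\cong M(S)^r$ (you derive this from K\"unneth and perfectness of $S$, the paper cites Karpilovsky), and in the Abelian case both identify projective faithfulness with nondegeneracy of the associated alternating form (your radical $R$ is exactly the set of $\alpha$-regular elements in the paper's language, and your Stone--von Neumann dimension count replaces the paper's citation to Higgs). The only difference is that you unpack explicitly what the paper obtains by reference.
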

\begin{proof}
Assume $S$ non-Abelian. Then, by \cite[page 132, Prop. 4.1.2]{Karpilovsky}, the Schur multiplier $M(S^r)$ (that is, the cohomology group $H^2(S^r,\C^\times)$) is equal to $M(S)^r$, because of the definition of tensor product of groups in \cite[page 58]{Karpilovsky}. That is, every Schur multiplier of $S^r$ is cohomologous to a product of $r$ Schur multipliers of $S$, which, by \cite[page 198, Corollary 5.1.3]{Karpilovsky}, implies that every irreducible projective representation of $S^r$ is an outer tensor product of irreducible projective representations of $S$. Moreover, such an outer tensor product is faithful if and only if each factor is faithful, thus concluding the proof that $l(S^r)=l(S)^r$.

Next assume $S$ Abelian, that is, $S=\Z/p$, for a prime $p$, and that $S^r$ has a (complex)  faithful irreducible $\alpha$-representation for some Schur multiplier $\alpha\in Z^2(S^r,\C^\times)$. Then, by \cite[page 578, Lemma 10.4.3]{Karpilovsky}, the identity is the only element of $S^r$ that is $\alpha$-regular. Recall that, since $S^r$ is Abelian, an element $g$ is $\alpha$-regular if and only if $\alpha(g,x)=\alpha(x,g)$ for all $x\in S^r$ (see \cite[page 107]{Karpilovsky}, or \cite[Definition 1.2]{Higgs01} for the general definition of $\alpha$-regularity). Therefore, by \cite[Lemma 2.2(1)]{Higgs01}, $S^r$ is of symmetric type, which, in our case, simply means that $r$ is even; and moreover every irreducible projective $\alpha$-representation of $S^r$ has degree $\sqrt{|S^r|}=p^{r/2}$.
\end{proof}

The outer automorphism groups of the finite simple groups are small. For our purposes, the very rough estimate below will suffice:
\begin{lemma}
\label{L:auto}
Let $S$ be a finite simple group, and $r\geq 1$. If $S$ is non-Abelian, then $|\Out(S)|\leq |S|$, $|\operatorname{Aut}(S)|\leq |S|^2$, and $|\operatorname{Aut}(S^r)|\leq r!|S|^{2r}$. If $S$ is Abelian, $S=\Z/p$ for a prime $p$, then $|\operatorname{Aut}(S^r)|\leq p^{r^2}$.
\end{lemma}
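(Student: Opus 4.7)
My plan is to handle the non-Abelian simple case and the cyclic case $S=\Z/p$ separately.

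For non-Abelian simple $S$, the first step is to establish $|\Out(S)|\leq|S|$ by appealing to the classification of finite simple groups: for $S=A_n$ one has $|\Out(S)|\in\{2,4\}$; for the $26$ sporadic groups $|\Out(S)|\in\{1,2,4\}$; and for groups of Lie type, $|\Out(S)|$ factors as a product of diagonal, field, and graph contributions and is easily bounded by $|S|$ in every family. In fact the classification yields the much stronger $|\Out(S)|=O(\log|S|)$, so the required inequality has enormous slack. Since a non-Abelian finite simple group is centerless, $\operatorname{Inn}(S)\cong S$, and hence $|\Aut(S)|=|S|\cdot|\Out(S)|\leq|S|^{2}$.

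To bound $|\Aut(S^r)|$ in the non-Abelian simple case I would use the structural identification $\Aut(S^r)\cong\Aut(S)\wr\Sigma_r$. The key observation is that, since $S$ is non-Abelian simple, the normal subgroups of $S^r$ are precisely the products $\prod_{i\in I}S_i$ indexed by subsets $I\subseteq\{1,\dots,r\}$ of the direct factors $S_i\cong S$; this follows by projecting a normal subgroup $N$ onto each $S_i$ and using $[N,S_i]\subseteq N\cap S_i$ together with $S_i=[S_i,S_i]$ to force inclusion of the relevant factors. Consequently the minimal normal subgroups of $S^r$ are exactly $S_1,\dots,S_r$, so any automorphism permutes them, yielding a homomorphism $\Aut(S^r)\to\Sigma_r$ whose kernel is $\Aut(S)^r$ (factor-preserving automorphisms) and which splits via the natural $\Sigma_r$-action on $S^r$. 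Counting then gives $|\Aut(S^r)|=r!\cdot|\Aut(S)|^r\leq r!\cdot|S|^{2r}$.

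For the cyclic case $S=\Z/p$, the group $S^r$ is an $\FF_p$-vector space of dimension $r$, so $\Aut(S^r)=\GL(r,\FF_p)$, whose order is $\prod_{i=0}^{r-1}(p^r-p^i)<p^{r^2}$. The main obstacle in the proof is only the first bound $|\Out(S)|\leq|S|$, which formally rests on the classification; however, since the inequality is so loose compared with what the classification yields, the verification reduces to a brief inspection of the standard tables of outer automorphism groups of the finite simple groups and poses no real difficulty.
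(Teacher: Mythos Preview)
Your proof is correct and follows essentially the same route as the paper's: both bound $|\Out(S)|$ via the classification of finite simple groups (the paper cites \cite{Quick04} for the sharper $|\Out(S)|\leq|S|/30$, while you sketch the case-check directly), deduce $|\Aut(S)|\leq|S|^2$ from $\operatorname{Inn}(S)\cong S$, identify $\Aut(S^r)$ with the wreath product $\Aut(S)\wr\Sigma_r$, and treat the Abelian case via $\Aut((\Z/p)^r)=\GL(r,\FF_p)$. The only minor difference is in justifying the wreath-product structure: the paper decomposes an arbitrary endomorphism $\phi:S^r\to S^r$ into coordinate maps $\phi_{ij}$ and uses the commuting-images constraint, whereas you argue that the factors $S_i$ are exactly the minimal normal subgroups and hence are permuted by any automorphism; both are standard and equivalent. (One harmless inaccuracy: for the sporadic groups $|\Out(S)|\in\{1,2\}$, never $4$, but this of course does not affect the bound.)
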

\begin{proof}
Assume $S$ non-Abelian. Using the classification of finite simple groups, it has been proved in \cite[Lemma 2.2]{Quick04} that $|\Out(S)|\leq |S|/30$. Since $\operatorname{Inn}(S)\simeq S$, it follows that $|\operatorname{Aut}(S)|\leq |S|^2$.
Moreover, $\operatorname{Aut}(S^r)$ is isomorphic to the semi-direct product between the permutation group in $r$ letters, and $\operatorname{Aut}(S)^r$. Indeed,  any group homomorphism $\phi:S^r\to S^r$ can be written as
\[ \phi(g_1, \ldots, g_r)= \left(\prod_j \phi_{1j}(g_j), \prod_j \phi_{2j}(g_j), \ldots,  \prod_j \phi_{rj}(g_j) \right)\]
where $\phi_{ij}:S\to S$ are group homomorphisms such that, for all $i$, and all $(g_1, \ldots, g_r)\in S^r$, $\{\phi_{ij}(g_j)\}_{j=1}^r$ commute. Since $S$ is simple non-Abelian, this implies that for each $i$, there is at most one value of $j$, such that $\phi_{ij}$ is non-trivial (and hence an automorphism). Assuming further that $\phi$ is an automorphism, there must in fact be a permutation $\sigma\in \Sigma_r$ such that $\phi_{ij}$ is non-trivial if and only if $\sigma(i)=j$.

In the Abelian case, an automorphism of $S^r$ is represented by an $r\times r$ matrix with entries in $\Z/p$, and thus $|\operatorname{Aut}(S^r)|\leq p^{r^2}$.
\end{proof}

\subsection{Volume, diameter and dimension}
 We will need a rough estimate for the volume of
	 the compact rank one-symmetric spaces (which is actually  known explicitly).
	 % is
	%comparable  to the volume of the unit Euclidean balls of the same dimension. 
	Denote by $B^n$ the unit Euclidean ball and  by $\C P^n =\Ss ^{2n+1} /\U(1)$ and $\HH P^n =\Ss^{4n+3} / \Sp (1)$ the complex and the quaternionic projective spaces, respectively. Note that 
	$\C P^n$ and $\HH P^n$ equipped with their canonical,
          quotient
        metrics have sectional curvatures bounded above by $4$.

\begin{lemma}
Let  $M$ be an $n$-dimensional compact, simply connected rank one symmetric space with curvature bounded above by $4$. Then
$$\vol (M)  > {\frac 1 {2^n}} \vol (B^n) \,.$$
% =\Ss ^{2n+1} & \geq \sqrt[2n]{1/|G|} \\
%\diam(\HH P^n/G	
\end{lemma}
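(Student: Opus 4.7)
The plan is to apply Günther's volume comparison to a geodesic ball in $M$ of radius $\pi/4$, using as model the sphere of constant sectional curvature $4$. The first preliminary step is to check that the injectivity radius of $M$ is at least $\pi/2$, which is where I use the rank one symmetric space assumption: for each such $M$ the injectivity radius coincides with the diameter, and a direct inspection of the standard list $\Ss^n, \C P^n, \HH P^n,$ and the Cayley plane shows that the constraint $K_{\max}\leq 4$ forces $\diam(M)=\pi/\sqrt{K_{\max}}\geq \pi/2$ (for the sphere this is just the relation $K_{\max} = 1/R^2$). In particular $\pi/4$ is less than both $\pi/\sqrt{K_{\max}}$ and the injectivity radius, so Günther's theorem applies.

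Applying Günther's inequality to $B_{\pi/4}(p)\subset M$ then yields
\[
\vol\bigl(B_{\pi/4}(p)\bigr) \;\geq\; V_4(\pi/4) \;=\; \omega_{n-1}\int_0^{\pi/4} \Bigl(\tfrac{1}{2}\sin 2t\Bigr)^{n-1}\,dt,
\]
where $V_4(r)$ denotes the volume of a geodesic ball of radius $r$ in the sphere of constant curvature $4$ and $\omega_{n-1}=n\,\vol(B^n)$ is the volume of the standard unit $(n-1)$-sphere. At this point I plan to use the elementary Jordan inequality $\sin x \geq 2x/\pi$ on $[0,\pi/2]$ to estimate $\tfrac{1}{2}\sin(2t) \geq 2t/\pi$ on $[0,\pi/4]$; a direct computation then gives
\[
V_4(\pi/4) \;\geq\; \omega_{n-1}\Bigl(\tfrac{2}{\pi}\Bigr)^{n-1}\frac{(\pi/4)^n}{n} \;=\; \frac{\pi}{2}\cdot\frac{\vol(B^n)}{2^n}.
\]
Since $\pi/2>1$ and trivially $\vol(M)\geq \vol(B_{\pi/4}(p))$, the strict inequality $\vol(M) > \vol(B^n)/2^n$ follows at once.

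The only real obstacle is verifying the injectivity radius lower bound, but this is standard from the classification of compact rank one symmetric spaces, together with the well-known fact that for them the diameter and injectivity radius coincide. All of the remaining steps — Günther's comparison and the sinusoidal integral bound — are entirely routine.
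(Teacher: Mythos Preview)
Your proof is correct and follows the same strategy as the paper's: bound the injectivity radius of $M$ below by $\pi/2$ via the rank-one hypothesis, then apply G\"unther's volume comparison against the model sphere of curvature~$4$. The only difference is in the final numerical step: the paper pushes the comparison all the way to radius $\pi/2$, obtaining $\vol(M)\geq\vol(\tfrac{1}{2}\Ss^n)$, and then bounds $\vol(\tfrac{1}{2}\Ss^n)$ from below by orthogonally projecting the sphere onto its equatorial ball of radius $\tfrac{1}{2}$; you instead stop at radius $\pi/4$ and estimate the resulting integral directly via Jordan's inequality, arriving at the same bound (in fact with an extra factor of $\pi/2$).
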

\begin{proof}
	The injectivity radius of the symmetric space $M$ is at least equal to the injectivity radius of the sphere $\frac 1 2 \Ss ^n$  of constant curvature $4$.  By the Bishop--Gromov volume comparison, we have
	$$\vol (M) \geq \vol (\frac 1 2 \Ss^n) \,.$$
	Considering the orthogonal projection, the volume of $\frac 1 2 \Ss ^n$ is larger  than the volume of the unit $n$-dimensional Euclidean ball of radius $\frac 1 2$. This implies the claim.
\end{proof}	
	
	As an application we deduce:

\begin{lemma}
\label{L:volume}
Let $M$ be an $n$-dimensional  compact, simply connected rank one  symmetric space
with curvature bounded above by $4$.
Let $G$ be a finite group acting by isometries on $M$.
% the $n$-sphere $\Ss^n$ (respectively the projective spaces $\C P^n$, $\HH P^n$).
 Then $$\diam (M/G) \geq \frac 1 2  \sqrt[n]{1/|G|} \, . $$
 %the diameter of the quotient satisfies:
%\begin{align*}
%\diam(\Ss^n/G) & \geq \sqrt[n]{1/|G|} \,
%\diam(\C P^n/G) & \geq \sqrt[2n]{1/|G|} \\
%\diam(\HH P^n/G) & \geq \alpha(n)  \sqrt[4n]{1/|G|}
%\end{align*}
%where $ \alpha(n)=\sqrt[4n]{1/(2n+1)}>1/2$. 
\end{lemma}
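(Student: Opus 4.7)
The plan is a standard Bishop-type volume comparison. The combinatorial input is that the quotient $M/G$ coincides with the closed $r$-ball around any point, where $r = \diam(M/G)$, so upstairs the $G$-translates of a single ball of radius $r$ in $M$ cover all of $M$. The geometric input consists of an upper bound for the volume of that ball and the previous lemma's lower bound for $\vol(M)$.

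Concretely, I would fix $p \in M$ and set $r = \diam(M/G)$. Since every point of $M/G$ lies within distance $r$ of the image $\pi(p)$,
\[
M \;=\; \bigcup_{g\in G} g\cdot B(p,r),
\]
and, as $G$ acts isometrically, this yields $\vol(M) \le |G|\cdot\vol(B(p,r))$. Being a compact rank-one symmetric space, $M$ is Einstein with positive Einstein constant, hence $\operatorname{Ric}(M) \ge 0$, and so Bishop's comparison theorem with Euclidean space gives
\[
\vol(B(p,r)) \;\le\; \vol(B^n)\cdot r^n
\]
for every $r \ge 0$. Combining these inequalities with the previous lemma's bound $\vol(M) > \vol(B^n)/2^n$ produces
\[
\frac{\vol(B^n)}{2^n} \;<\; |G|\cdot\vol(B^n)\cdot r^n,
\]
and extracting $n$-th roots gives $r > \tfrac{1}{2}\sqrt[n]{1/|G|}$, as desired.

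I do not anticipate any genuine obstacle. The only point worth checking is that Bishop's inequality runs in the correct direction, which it does precisely because the model space matching $\operatorname{Ric} \ge 0$ is Euclidean $\mathbb{R}^n$, whose unit-ball volume $\vol(B^n)$ is exactly the normalizing constant appearing in the previous lemma's lower bound for $\vol(M)$. This compatibility is what makes the final constant come out to a clean $1/2$, with no stray dimension-dependent factor.
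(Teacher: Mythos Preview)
Your proof is correct and follows essentially the same route as the paper's: both combine the Bishop--Gromov upper bound $\vol(B(p,r))\le r^n\vol(B^n)$ (valid since $M$ has positive curvature, hence nonnegative Ricci) with the previous lemma's lower bound $\vol(M)>\vol(B^n)/2^n$. The only cosmetic difference is that the paper phrases the combinatorial step via a fundamental domain contained in a ball of radius $d$ and the inequality $\vol(M/G)\ge\vol(M)/|G|$, whereas you phrase it dually as $|G|$ translates of $B(p,r)$ covering $M$; the resulting inequality is identical.
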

\begin{proof}
Let $d$ denote the diameter of the quotient. Then a fundamental domain for the action is contained in a ball in $M$ of radius $d$. Since $M$ is positively curved, by the Bishop--Gromov Theorem, the volume of the quotient satisfies $d^n\vol(B^n)\geq \vol (M^n/G)$. %(respectively, $d^{2n}\vol(B^{2n})\geq \vol (\C P^n/G)$, $d^{4n}\vol(B^{4n})\geq \vol (\HH P^n/G)$). 
On the other hand, $\vol (M/G) \geq \vol(M)/|G|$ (with equality if the action is effective). 
%The 
%and similarly for the projective spaces.
 The result now follows from
the previous Lemma.
% straightforward computations, and, in the spherical case, the fact that $\Gamma(x)$ is increasing for $x\geq 2$.
\end{proof}

%\begin{remark}
%At this point one could  prove the Main Theorem in the special case where the group $G$ is finite and simple, using Lemma \ref{L:volume} above, together with Lemmas \ref{L:l(S)} and \ref{L:l(S)alternating}.
%\end{remark}

%Although not strictly necessary, we give a proof of the Main Theorem in the special case where the group $G$ is finite and simple, using Lemma \ref{L:l(S)} above.
%\begin{proposition}
%There exists $\epsilon>0$ such that, for any $n\geq 3$, and any finite simple group $G<\OO(n)$, the orbit space $\Ss^{n-1}/G$ has diameter greater than $\epsilon$.
%\end{proposition}
%\begin{proof}
%...
%\end{proof}
\subsection{Proof of Main Theorem --- super-reducibly case}
We can now prove the Main Theorem under the assumption that 
the connected component $G^0$ of $G$ acts as scalars, i.e. \emph{super-reducibly} in terms of Definition \ref{D:super-reducible}).
%We will call a closed subgroup $G\subset \OO(n)$ \emph{almost finite} if it  is  either finite, of the form $H.\U(1)$ for $H\subset \SU(n/2)$ finite, or of the form $H.\Sp(1)$ for $H \subset \Sp(n/4)$ finite.

\begin{theorem}
\label{T:almostfinite}
There exists $\epsilon>0$ such that $\diam(\Ss^{n-1}/G)>\epsilon$ for every  group $G\subset \OO(n)$, for which the connected component $G^0 \subset \OO(n)$ is super-reducible. 
\end{theorem}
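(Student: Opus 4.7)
The plan is to show that the finite quotient $\bar G := G/G^0$ satisfies $\log|\bar G| \leq Cn$ for a universal constant $C$, and then apply the volume-comparison Lemma~\ref{L:volume} to the action of $\bar G$ on $M := \Ss^{n-1}/G^0$. First I would assume $\diam(\Ss^{n-1}/G) < \pi/4$, as otherwise we are done, and by Theorem~\ref{T:epsilon(n)} restrict attention to $n$ as large as convenient. Since $G^0$ is super-reducible it acts by $F$-scalars for some $F\in\{\R,\C,\HH\}$; enlarging $G$ so that $G^0$ is the full connected group of $F$-scalars only decreases the quotient's diameter, so I may assume this. Then $M$ is one of $\Ss^{n-1}$, $\C P^{n/2-1}$, or $\HH P^{n/4-1}$, a simply connected compact rank-one symmetric space of real dimension at least $n-4$ and sectional curvature at most $4$; the finite group $\bar G$ acts on $M$ isometrically with $\diam(M/\bar G) = \diam(\Ss^{n-1}/G)$, and Lemma~\ref{L:volume} gives $\diam(M/\bar G) \geq \tfrac{1}{2}|\bar G|^{-1/\dim M}$, reducing the theorem to the bound $\log|\bar G| \leq Cn$.

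To bound $|\bar G|$, I would pick a minimal normal subgroup $\bar N \triangleleft \bar G$, which is necessarily a direct product $\bar N \cong S^r$ of isomorphic simple groups. Its preimage $N \subset G$ is a normal subgroup of $G$ strictly containing $G^0$, so by Lemma~\ref{L:super-reducible} (using $\diam < \pi/4$) $N$ acts irreducibly on $\R^n$, yielding a faithful irreducible projective $F$-representation of $\bar N$ of dimension $m = n/\dim_\R F \geq l(\bar N)$. Since this representation is irreducible, Schur's lemma forces the centralizer of $\bar N$ in $\bar G$ to be trivial (up to a bounded factor coming from the scalar $-I$ when $F = \R$), so $|\bar G|$ is bounded by an absolute constant times $|\bar N|\cdot |\Out(\bar N)|$, and Lemma~\ref{L:auto} takes over from there. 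I then split into three cases according to $S$: (a) $S$ non-Abelian and not alternating, where Lemmas~\ref{L:l(S)} and~\ref{L:l(S^r)} yield $\log|S| \leq C\, l(S)$ and $l(\bar N) = l(S)^r$, hence $\log|\bar N| \leq C\, l(\bar N) \leq Cm$, together with a similar bound on $\log|\Aut(\bar N)|$ from Lemma~\ref{L:auto}; (b) $S = \Z/p$ cyclic, where Lemma~\ref{L:l(S^r)} forces $r$ even with $l(\bar N) = p^{r/2}$, so $\log|\bar N| \leq 2\log m$ and $\log|\Aut(\bar N)| \leq r^2 \log p = O((\log m)^2)$; (c) $S = A_k$ alternating.

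The main obstacle is case~(c), since when $\bar N = A_k$ acts through its standard permutation representation on $\R^{k-1}$ one has $l(A_k) = k-1$ while $|A_k| = k!/2$ is super-polynomial in $k-1$, so no dimension bound of the form $\log|A_k| \leq C\, l(A_k)$ is possible. I would handle this via Lemma~\ref{L:l(S)alternating}: bounded $k$ contributes only finitely many groups; when $k \geq 12$, the subcase $r \geq 2$ or $m \geq k(k-3)/4$ provides a dimension at least quadratic in $k$ or growing geometrically in $r$, and the argument of case~(a) applies verbatim. The only remaining situation is $r = 1$ with $\bar N = A_k$ acting via the standard representation, in which case $\bar G$ normalizes $A_k$ inside $\OO(k-1)$, so $\bar G \subseteq \Sigma_k$ and $[\bar G : A_k] \leq 2$. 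Since $A_k$-orbits refine $\Sigma_k$-orbits,
\[ \diam(\Ss^{k-2}/A_k) \;\geq\; \diam(\Ss^{k-2}/\Sigma_k) \;\geq\; \pi/4 \]
by Theorem~\ref{T:reflection} (classical reflection group of type $A_{k-1}$), and Lemma~\ref{L:index} transfers this to a universal positive lower bound on $\diam(\Ss^{k-2}/\bar G)$. Combining all cases gives $\log|\bar G| \leq Cn$, which together with the volume bound completes the proof.
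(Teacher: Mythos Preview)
Your outline follows the paper's proof closely, but there is a real gap in the setup that causes two key steps to fail. You enlarge $G$ so that $G^0$ becomes the full connected group of $F$-scalars, where $F$ is read off from the original $G^0$; however, this does not make $G^0$ \emph{maximal} among super-reducible normal subgroups of $G$. For instance, if $G$ is finite (so $G^0=\{1\}$ and $F=\R$) but contains a large central cyclic subgroup $C$ lying inside the complex scalars $\U(1)$, then $C$ is a super-reducible normal subgroup strictly containing $G^0$; a minimal normal subgroup $\bar N$ of $\bar G=G$ could well lie inside $C$, and then $N=\bar N$ is super-reducible rather than irreducible --- Lemma~\ref{L:super-reducible} gives only the dichotomy, not irreducibility. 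The same maximality is what forces the centralizer $C_G(N)$ (itself a super-reducible normal subgroup, by Schur) down into the scalar part, which is what underlies your bound on $|\bar G|$ in terms of $|\Aut(\bar N)|$. The paper repairs both issues in Lemma~\ref{L:assumptions}: it first enlarges $G$ so that its scalar part is a \emph{maximal} super-reducible normal subgroup (possibly upgrading $F$ from $\R$ to $\C$ or $\HH$ in the process), and then works with the quotient $H/Z$ rather than $G/G^0$.

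Two smaller points once the above is fixed. In the alternating standard-representation case you get $[G:A_k]\le 4$, not $2$, since the centralizer $\{\pm1\}$ contributes a factor beyond $\Aut(A_k)=\Sigma_k$; this is harmless for the conclusion via Lemma~\ref{L:index}. You also need to observe that this case forces $F=\R$: otherwise the normal subgroup $A_k\triangleleft G$ would act on $\R^n$ as two or four copies of its standard real representation, which is neither irreducible nor super-reducible, contradicting Lemma~\ref{L:super-reducible}.
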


We start with a few reductions:

%To shorten the proof of Theorem \ref{T:almostfinite}, we first collect a series of reductions in the following lemma, where the notion of a \emph{super-reducible} group (see Definition \ref{D:super-reducible}) is used.
\begin{lemma}
\label{L:assumptions}
To prove Theorem \ref{T:almostfinite}, one may assume that  $\diam(\Ss^{n-1}/G)<\pi/4$ and $n>16$.  Moreover, we may assume that one of the following three cases occurs:
\begin{enumerate}
%\item $n>16$.
%\item $\diam(\Ss^{n-1}/G)<\pi/4$.
\item The group $G$ is finite,  $\OO (1) =\{\pm 1\}\subset G$ and $\{\pm 1\}$ is maximal among super-reducible normal subgroups of $G$.
\item We have $G=H\cdot \Sp (1)$. The group $\Sp(1)= G^0$ is  maximal among super-reducible normal subgroups of $G$. The group $H$ is finite and contains the center $Z$ of  $\Sp(n/4)$.
\item We have $G=H\cdot \Sp (1)$. The group $\U(1)= G^0$ is  maximal among super-reducible normal subgroups of $G$. The group $H$ is finite and contains the center $Z$ of  $\SU(n/2)$.
%$H$ is finite and contains the center
%In the infinite case, $\U(1)$ (respectively $\Sp(1))$ is maximal among super-reducible normal subgroups of $G$.
%\item In the infinite case, the group $H$ contains the center $Z$ of %$\SU(n/2)$ (respectively $\Sp(n/4)$).
\end{enumerate}
\end{lemma}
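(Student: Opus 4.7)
The plan is to successively enlarge $G$, which can only decrease the diameter of the quotient, until it takes one of the three listed forms. Two preliminary reductions are immediate: if $\diam(\Ss^{n-1}/G)\geq \pi/4$ there is nothing to prove, so we may assume strict inequality; and Theorem~\ref{T:epsilon(n)} provides a positive bound $\epsilon(n)$ in each fixed dimension, so taking the minimum over the finitely many $n\leq 16$ shows that we may further assume $n>16$.

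Next, I classify the scalar type of $G^0$. Since $G^0$ is super-reducible, it acts by $F$-scalars for some $F\in\{\R,\C,\HH\}$, and by connectedness $G^0=\{1\}$, $\U(1)$, or a connected subgroup of $\Sp(1)$, according to whether $F=\R$, $\C$, or $\HH$. Let $N$ be a maximal super-reducible normal subgroup of $G$ containing $G^0$; then $N$ acts by $F'$-scalars with $F'\supseteq F$. The key step is to replace $G$ by $\bar G := G\cdot S_{F'}$, where $S_{F'}\subset \OO(n)$ is the full unit $F'$-scalar group ($\{\pm 1\}$, $\U(1)$, or $\Sp(1)$). To see that $\bar G$ is a subgroup, one verifies that $G$ normalizes $S_{F'}$: for $F'\in\{\C,\HH\}$ the presence of non-real scalars in $N$ forces any $g\in G$ (which normalizes $N$) to preserve the $F'$-structure on $V$, hence to lie in $\U(n/2)\cdot \langle\sigma\rangle$ or $\Sp(n/4)\cdot \Sp(1)$, which is precisely the normalizer of $S_{F'}$ in $\OO(n)$; compare Lemma~\ref{L:normalizer}. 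After this replacement $(\bar G)^0 = S_{F'}$ remains super-reducible and the quotient diameter has not increased.

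It remains to match one of the three listed cases. If $F'=\R$, then $G$ is finite; adjoining $-I$ yields $\{\pm 1\}\subset G$, and $\{\pm 1\}$ is then maximal among super-reducible normal subgroups, since a strictly larger such subgroup would have forced $F'\neq \R$ above; this is case~(1). If $F'=\HH$ (after further enlarging $G$ to $G\cdot \Sp(1)$ in the borderline situation $G^0=\U(1)\subset \Sp(1)$), write $G=H\cdot \Sp(1)$ with $H$ a finite subgroup surjecting onto $G/\Sp(1)$, and enlarge $H$ to contain $Z(\Sp(n/4))=\{\pm I\}$, which is freely available inside $\Sp(1)\subset G$; this is case~(2). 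The case $F'=\C$ is treated identically with $\U(1)$ in place of $\Sp(1)$ and $Z(\SU(n/2))$ in place of $Z(\Sp(n/4))$, yielding case~(3).

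The main obstacle is the enlargement step $G\mapsto G\cdot S_{F'}$: one must confirm that $G$ normalizes the entire scalar group and not merely its subgroup $N$. The cleanest route is to identify the automorphisms of $S_{F'}$ induced by conjugation in $\OO(n)$ (trivial or complex conjugation for $\U(1)$, inner for $\Sp(1)$), and then to check that each conjugation-automorphism of $N\subset S_{F'}$ extends uniquely to one of $S_{F'}$. A secondary subtlety is preserving the maximality of the super-reducible normal subgroup through successive enlargements, but this follows because enlarging $G^0$ to the full $F'$-scalars already captures the largest possible connected super-reducible normal subgroup, after which the only remaining freedom is in the finite complement $H$.
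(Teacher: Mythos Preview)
Your approach matches the paper's: enlarge $G$ by a maximal super-reducible group it normalizes, then sort by the resulting scalar type. The paper phrases the enlargement slightly differently, taking $L$ to be maximal among super-reducible subgroups of $\OO(n)$ that contain $G^0$ and are \emph{normalized} by $G$ (so $L$ need not lie in $G$), replacing $G$ by $G\cdot L$, and then showing via Lemma~\ref{L:normalizer} that $L$ is forced to equal the full scalar group $\{\pm1\}$, $\U(1)$, or $\Sp(1)$. Your version---taking $N$ maximal super-reducible normal \emph{inside} $G$ and then adjoining $S_{F'}$---reaches the same endpoint; your argument that $G$ normalizes $S_{F'}$ is correct (the $\R$-algebra generated by $N$ is exactly $F'$, so conjugation by $G$ preserves it and hence its unit group).

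Two places need more care. First, in the complex case the normalizer of $\U(1)$ in $\OO(n)$ is $\U(n/2)\rtimes\langle\sigma\rangle$, not $\U(n/2)$ alone; elements of $G$ may be complex anti-linear. The paper disposes of this by passing to the index-two subgroup $G\cap\U(n/2)$ via Lemma~\ref{L:index}, a step you omit when you say case~(3) is ``treated identically'' to case~(2). Second, your construction of $H$ as ``a finite subgroup surjecting onto $G/\Sp(1)$'' is underspecified. The paper simply sets $H=G\cap\Sp(n/4)$ (respectively $G\cap\SU(n/2)$): this is automatically finite, satisfies $H\cdot\Sp(1)=G$ because $G\subset\Sp(1)\cdot\Sp(n/4)$, and contains $Z$ for free since $Z=\Sp(1)\cap\Sp(n/4)\subset G^0\cap\Sp(n/4)\subset H$. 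Your remark that $Z=\{\pm I\}$ is ``freely available inside $\Sp(1)$'' places $Z$ in the wrong factor; for the subsequent proof of Theorem~\ref{T:almostfinite} one needs $H\subset\Sp(n/4)$ so that $H/Z$ acts on $\HH P^{(n-4)/4}$.
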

\begin{proof}
%\begin{enumerate}
 The first statement is clear. By Theorem
 \ref{T:epsilon(n)}  we may assume  $n>16$.
 % due to	Theorem
 %\ref{T:epsilon(n)}.

  Consider a subgroup $L$ of $\OO (n)$ which is maximal among subgroups that are super-reducible, contain $G^0$ and are normalized by $G$. Replacing $G$ by $G\cdot L$
  and observing that the connected component of $G\cdot L$ is the super-reducible group $L^0$, we may assume that $G=G\cdot L$, hence $L\subset G$.
  
   Clearly, $\pm 1 \in L$.  If $L= \{\pm  1 \}$ we are in case (1).
   
   Otherwise, $L$ is of complex or quaternionic type. Assume that $L$ is of quaternionic type, hence $L\subset  \Sp (1)$.  By  Lemma \ref{L:normalizer}, the group $G$ is contained  in $\Sp (1) \cdot \Sp (n/4)$.   Hence $\Sp (1)$ is normalized by $G$, thus $L=\Sp (1)$, by maximality of $L$.  We can now take $H$ to be  the intersection of $G$ with   $\Sp(n/4)$.
   % \to \Sp (1)\cdot \Sp (n/4)$.  

%\item If $\{\pm 1\}\not\subset G$, we may replace $G$ with $G.\{\pm 1\}$, and the diameter will only decrease. If $\{\pm 1\}$ is not maximal super-reducible, take a larger one $K$, which will necessarily be of complex or quaternionic type, and replace it with the double centralizer of $K$, which will be the $\U(1)$ or $\Sp(1)$ that contains $K$. More precisely, replace $G$ with the group $G_1$ generated by $G$ and the $C_{\OO(n)}(C_{\OO(n)}(K))$. Since $G$ normalizes $K$, it also normalizes $C_{\OO(n)}(C_{\OO(n)}(K))$. Thus $G_1$ is of type $H.\U(1)$ (up to taking an index two subgroup, which we may do by Lemma \ref{L:index}) or $H.\Sp(1)$.

%\item Similarly, if $G= \U(1)$  is not maximal among super-reducible normal subgroups of $G$, take a larger one, which will necessarily be of quaternionic type, and replace it with $\Sp(1)$. If $G=H.\Sp(1)$, then $\Sp(1)$ is necessarily maximal super-reducible.
%\item Indeed, we may replace $H$ with the group generated by $Z$ and $H$, and this won't change $G$. 
%\end{enumerate}

Similarly, if $L$ is of complex type then applying Lemma \ref{L:normalizer} and the maximality of $L$ we obtain $L=\U (1)$ and
$G$ is contained in the extension of $\U (n/2)$ by the complex-conjugation. Replacing $G$ by an index two subgroup, which is possible by Lemma   \ref{L:index}, we may assume that $G\subset \U (n)$.  Again, we obtain $H$  as the intersection of $G$ with $\SU (n/2)$.
\end{proof}

\begin{proof}[Proof of Theorem \ref{T:almostfinite}]
We make the assumptions listed in Lemma \ref{L:assumptions}. If $G$ is finite, we set $H=G$ to make the notation more uniform. We denote by $Z$ the center of $\OO(n)$ (resp. $\SU(n/2)$, $\Sp(n/4)$), so $Z$ is cyclic of order $2$ (resp. $n/2$, $2$).
Let $\bar{N}$ be a minimal normal subgroup of $H/Z$.  Since $\bar{N}$ is minimal normal, it is characteristically simple, hence isomorphic to $S^r$, for some finite simple group $S$ (see \cite[Lemmas 2.7 and 2.8]{Wilson}).

We will show that $\log(|H/Z|)/n$ is uniformly bounded from above by providing appropriate bounds on $n$ and on $|H/Z|$. This will conclude the proof via an application of Lemma \ref{L:volume}, because $\Ss^{n-1}/G=\C P^{(n-2)/2}/(H/Z)$ (resp. $\HH P^{(n-4)/4}/(H/Z)$).

Let $N$ be the inverse image of $\bar{N}$ in $H$.
We claim that $N\subset \OO(n)$ is irreducible.
Indeed, $N\cdot\{\pm 1\}$ (resp. $N\cdot\U(1)$, $N\cdot\Sp(1)$) must be irreducible, because it is normal in $G$, and strictly contains $\{\pm 1\}$ (resp. $\U(1)$, $\Sp(1)$), which is maximal super-reducible by assumption.
This implies that, as an $N$-representation, $\R^n$ breaks into at most $1$ (resp. $2$, $4$) irreducible factors. Since $n>16$, $N$ cannot be super-reducible, and since it is normal in $G$, it must be irreducible. 

Next, we claim that the centralizer $C_H(N)$ of $N$ in $H$ is $Z$, so that, in particular, $Z$ is the center of $N$. Indeed, since $N$ is irreducible, $C_H(N)$ is super-reducible. This implies that $C_H(N)$ (resp. $C_H(N)\cdot\U(1)$, $C_H(N)\cdot\Sp(1)$) is not irreducible, because $n>16$. Thus, being normal in $G$, it must be super-reducible. By maximality of $\{\pm 1\}$ (resp. $\U(1)$, $\Sp(1)$) among super-reducible normal subgroups of $G$, we must have $C_H(N)=\{\pm 1\}$ (resp. $C_H(N)\cdot\U(1)=\U(1)$, $C_H(N)\cdot\Sp(1)=\Sp(1)$), which implies $C_H(N)=Z$.

{\bf Bounding $|H/Z|$ from above}. $H$ acts by conjugation on $N$, so we have a group homomorphism $\eta: H\to\Aut(N)$, whose kernel is $C_H(N)=Z$. Thus $|H|\leq |Z|\cdot |\operatorname{image}(\eta)|$. But
\[ \operatorname{image}(\eta) \subset \Aut_Z(N)=\{\phi\in\Aut(N)\ |\ \phi(z)=z\ \ \forall z\in Z\}\]
and each element of $\Aut_Z(N)$ induces an automorphism of $\bar{N}=N/Z$. Thus, denoting $\Aut_0(N)=\{\phi\in \Aut_Z(N)\ |\ \phi \text{ induces the trivial automorphism of }\bar{N}\}$, we have a short exact sequence
\[ 1\to \Aut_0(N) \to \Aut_Z(N) \to \Aut(\bar{N})\to 1. \]
Moreover, the map that sends $\phi\in\Aut_0(N)$ to $\alpha:\bar{N}\to Z$ defined by $\alpha(x)=\phi(x) x^{-1}$ establishes an isomorphism $\Aut_0(N)\simeq\Hom(\bar{N},Z)$. If $S$ is non-Abelian, $\Hom(\bar{N},Z)$ is trivial, and if $S$ is Abelian, we have $|\Hom(\bar{N},Z)|\leq |Z|^r$. Therefore, we may use Lemma \ref{L:auto} to obtain the bound
\begin{equation}
\label{E:Hbound}
|H/Z| \leq \left\{
\begin{array}{rl}
r!|S|^{2r} & \text{if }S\text{ is non-Abelian}\\
n^{r}p^{r^2} & \text{if }S=\Z/p
\end{array} \right.
\end{equation}

{\bf Bounding $n$ from below.} Consider the representation of $N$ on $\R^n$. It is faithful and irreducible. If it is of complex or quaternionic type (that is, if it commutes with some complex structure) then  $U=\R^n=\C^{n/2}$ is a faithful irreducible complex representation of $N$. Otherwise, the representation of $N$ on $\R^n$ is of real type, so that its complexification $U=\C^n$ is a faithful irreducible complex $N$-representation. 

The projectivization of $U$ has kernel which must be equal to $Z$, because $Z$ is the center of $N$. Thus we have obtained a projective faithful irreducible representation of $\bar{N}=S^r$ of dimension $n$ or $n/2$, and thus, via Lemma \ref{L:l(S^r)}, the bound
\begin{equation}
\label{E:nbound}
n \geq \left\{
\begin{array}{rl}
l(S^r)=l(S)^r & \text{if }S\text{ is non-Abelian}\\
p^{r/2} & \text{if }S=\Z/p.
\end{array} \right.
\end{equation}

To show that $\log|H/Z|/n$ is uniformly bounded and conclude the proof, we divide into three cases: $S$ Abelian, $S$ non-Abelian and non-alternating, and $S$ alternating and non-Abelian.

If $S$ is Abelian, isomorphic to $\Z/p$, then from \eqref{E:Hbound} and \eqref{E:nbound} we obtain
\begin{equation}
\label{E:Abelian}
\frac{\log|H/Z|}{n}\leq \frac{r\log n +r^2 \log p}{n}\leq 
\frac{r }{p^{r/4}}.\frac{\log(n)}{\sqrt{n}}+\frac{r^2}{p^{r/4}}.\frac{\log p}{p^{r/4}}
\end{equation}
which is bounded from above.

If $S$ is non-Abelian, then \eqref{E:Hbound} and \eqref{E:nbound} yield
\begin{equation}
\label{E:non-Abelian}
\frac{\log|H/Z|}{n}\leq \frac{\log(r!) + 2r\log|S|}{n}\leq \frac{\log(r!)}{l(S)^r}+\frac{2r\log|S|}{l(S)^r}
\end{equation}
Since $l(S)\geq 2$, the term $\frac{\log(r!)}{l(S)^r}$ is bounded. When $S$ is non-alternating, the last term $\frac{2r\log|S|}{l(S)^r}$ is bounded, because, by Lemma \ref{L:l(S)}, the quantity $\frac{\log|S|}{l(S)}$ is bounded. From now on assume $S$ is the alternating group $A_d$. If $r\geq 2$ and $d\geq 12$, then using Lemma \ref{L:l(S)alternating} we see that the last term in \eqref{E:non-Abelian} is again bounded:
\begin{equation}
\label{E:alternating}
\frac{2r\log|S|}{l(S)^r}\leq \frac{2r\log(d!/2)}{(d-1)^r}\leq
 \frac{2r}{(d-1)^{r-3/2}}.\frac{d\log d}{(d-1)^{3/2}}.
 \end{equation}
If $r\geq 2$ and $5 \leq d< 12$ then
\begin{equation}
\label{E:alternating2}
\frac{2r\log|S|}{l(S)^r}\leq \frac{2r\log(12!/2)}{2^r}
 \end{equation}
which is bounded. Thus we may assume $r=1$. If the faithful irreducible projective representation of $S=A_d$ constructed above is not the standard permutation representation, then by Lemma \ref{L:l(S)alternating} its dimension is at least $d(d-3)/4$, so that $\frac{2r\log|S|}{n}$ is again bounded. 

Therefore we have reduced to the case where $S=A_d$ and the projective representation $U$ of $\bar{N}=S=A_d$ constructed above is the standard representation on $\C^{d-1}$. Since this projective representation $A_d=\bar{N}\to \PGL(U)$ lifts to the linear representation $A_d\to \GL(U)$, the short exact sequence $1\to Z \to N \to \bar{N} \to 1$ splits, which implies that $N\simeq A_d\times Z$ (because $Z$ is the center of $N$). Thus  $A_d$ is a normal subgroup of $G$, and it acts in the standard way on $U=\C^{d-1}$. Recall that $U=\C^{d-1}$ was  isomorphic to either $\R^n$, or its complexification. The first case is precluded by our hypotheses, since then the restriction of the $G$-representation $\R^n$ to $A_d$ would be neither irreducible nor super-reducible. Therefore the subgroup $A_d$ acts on $\R^n=\R^{d-1}$ in the standard way. Since this representation is of real type, that is, it does not leave any complex structure invariant,  we are in the case where $G=H$ is finite, so, in particular, $Z=\pm 1$. Since the automorphism group of $A_d$ is isomorphic to $\Sigma_d$ (because $d\geq 7$, see \cite[Theorem 2.3]{Wilson}), the index of $A_d$ in $G$ is at most four, and the desired diameter bound follows from Theorem \ref{T:reflection} and Lemma \ref{L:index}.
\end{proof}

\section{Case where $G^0$ is irreducible}
\label{S:irreducible}

As noted at the end of Section \ref{S:polar}, Lemma \ref{L:super-reducible} reduces the proof of the Main Theorem to two cases, according to whether the identity component $G^0$ acts as scalar multiplication or irreducibly. We have dealt with the former in Section \ref{S:super-reducible}, and this section is devoted to the latter.

For convenience in this section we will consider \emph{almost} faithful representations $\rho:G \to \OO(V)$. We first lift $\rho$ to a representation of a semidirect product.
By~\cite[Lemma~7.5]{Wilking99}, there is a finite subgroup $\Gamma$ meeting all
connected components of $G$. Since the identity component
$G^0$ is a normal subgroup of $G$,
we can write $G=G^0\cdot\Gamma$. Now there is a finite covering
$G^0 \rtimes \Gamma\to G$ and we can lift $\rho$ to the semidirect product. 
Therefore from now on we assume $G$ splits as $G^0 \rtimes \Gamma$.

Furthermore, by passing to a finite cover we may also assume that $G^0$ is a product of simply connected simple Lie groups, and a torus. Altogether, we have reduced to proving the following:

\begin{theorem}
\label{T:irreducible}
There exists $\epsilon>0$ with the following property. Let $G^0$ be a product of a torus with finitely many simply-connected compact connected
simple Lie groups, let $\Gamma$ be a finite group acting on $G^0$ by automorphisms, and set $G=G^0 \rtimes \Gamma$. Let $\rho:G\to\OO(V)$ be an almost faithful representation whose restriction to $G^0$ is irreducible but not transitive. Then $\diam(\Ss(V)/G)>\epsilon$.
\end{theorem}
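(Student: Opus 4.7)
The plan is to reduce $G$ to a standard form to which the algebraic focal radius estimate of \cite{GorodskiSaturnino17} applies, and then combine that estimate with the quotient Klingenberg injectivity radius estimate (Proposition \ref{P:Klingenberg}) to extract a universal lower bound on $\diam(\Ss(V)/G)$.

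First I would carry out a structural reduction. Assume for contradiction that $\diam(\Ss(V)/G)<\pi/4$. Because $G^0$ is a direct product of a torus with simple simply connected compact factors $G_1,\ldots,G_k$, and $\rho|_{G^0}$ is irreducible, the complexification of $V$ is an external tensor product of irreducible factor representations twisted by a torus character. The finite group $\Gamma$ permutes the simple factors and acts on the torus, so to each $\Gamma$-orbit of simple factors there corresponds a normal subgroup of $G$. By Lemma \ref{L:super-reducible}, each such normal subgroup is either irreducible on $V$ or super-reducible. Invoking the constraint $\dim_F W=1$ from Definition \ref{D:super-reducible} together with the fact that a connected nonabelian compact simple Lie group has no nontrivial $1$-dimensional real or complex representations, I would show that any super-reducible normal subgroup arising this way embeds into a diagonal $\Sp(1)$ acting by quaternionic scalars. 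Iterating the dichotomy, I reach one of two cases: (i) $G$ preserves a nontrivial tensor product splitting of $V$ giving rise to a polar action whose generalized Weyl group is a classical reflection group, handled by Theorem \ref{T:reflection}; or (ii), up to a finite index replacement allowed by Lemma \ref{L:index}, $G^0$ takes the form $G_0\cdot G_1$, where $G_0$ is the subgroup of $F$-scalars ($F\in\{\R,\C,\HH\}$) and $G_1$ is a single simple simply connected compact factor acting irreducibly on $V$.

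In case (ii), I would fix a highest weight vector $p\in V$ of the $G_1$-representation. The main result of \cite{GorodskiSaturnino17} provides a universal constant $r_0>0$ such that the focal radius of the $G_1$-orbit through $p$ in $\Ss(V)$ is at least $r_0$, independently of the choice of $G_1$ and of the representation. Since $G_0$ acts by $F$-scalars and therefore only discretely enlarges $G_1\cdot p$ after projection to $\Ss(V)$, and since $\Gamma$ can be absorbed by a bounded-index argument via Lemma \ref{L:index}, the focal radius of the full orbit $G\cdot p$ in $\Ss(V)$ is still bounded below by a universal constant $r_0'>0$. Applying Proposition \ref{P:Klingenberg} on $\Ss(V)$, which has constant curvature $1$, converts this focal radius bound into a universal positive lower bound for $\diam(\Ss(V)/G)$, yielding the desired $\epsilon$.

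The principal obstacle lies in the structural reduction of the first step: a careful inventory of the normal subgroups generated by $\Gamma$-orbits of simple factors of $G^0$, and, at each stage, either peeling off a super-reducible diagonal $\Sp(1)$-factor that can be absorbed into $G_0$, or identifying a polar situation of type (i). This case analysis is similar in spirit to, but substantially more intricate than, the proof of Lemma \ref{L:super-reducible}, and must faithfully account for possible permutations of simple factors by $\Gamma$. A secondary difficulty is to verify that the focal radius estimate of \cite{GorodskiSaturnino17}, formulated for the $G_1$-orbit, survives the enlargement of the orbit by the scalar group $G_0$ and by the finite extension $\Gamma$; this is where the scalar nature of $G_0$ and the finite-index control from Lemma \ref{L:index} are essential.
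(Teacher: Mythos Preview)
Your overall strategy matches the paper's, but there are two genuine gaps.

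First, your structural dichotomy is incomplete. You claim the reduction ends in either a polar situation or in ``scalars times a single simple factor''. But when $\Gamma$ permutes $k\geq 3$ isomorphic simple factors transitively, the resulting group (e.g.\ $\Sigma_k\ltimes\SO(n)^k$ acting on $\otimes^k\R^n$) preserves no splitting $V=V_1\otimes V_2$ into two pieces, is not polar, and is not of the form $G_0\cdot G_1$ with $G_1$ simple. The paper isolates exactly these tensor-power cases in Lemma~\ref{L:G^0}(iv)--(vii) and handles them by a separate argument (Subsection~\ref{SS:puretensors}): the orbit of pure tensors is common to $G$ and $G^0$, so $\Gamma=G/G^0$ has a fixed point in $\Ss(V)/G^0$, and Lemma~\ref{L:fixedpoint} gives $\diam(\Ss(V)/G)\geq\tfrac12\diam(\Ss(V)/G^0)\geq\pi/8$. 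Your proposal has no mechanism for these cases; moreover, since $|\Gamma|\geq k!$ is unbounded here, Lemma~\ref{L:index} cannot absorb $\Gamma$.

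Second, in the ``scalars times simple'' case you invoke Proposition~\ref{P:Klingenberg} without addressing its crucial hypothesis: the fixed-point set of $(G_p)^0$ in the closed maximal tube of $N=Gp$ must lie in $N$. This is not automatic and occupies most of Section~5 in the paper. In the complex and quaternionic cases it is checked directly at the highest weight vector (Lemma~\ref{L:fix-pt-ii-iii}), using that $\Lg_p$ contains $\ker\lambda$ together with a scalar correction. In the real case one first reduces (via an alternative focal-point argument) to $\Lg_p$ being a maximal isotropy algebra, then proves $\mathrm{rk}\,\Lg_p=\mathrm{rk}\,\Lg-1$ (Lemma~\ref{L:rank}), deduces that $V^c$ is minuscule, and only then verifies the fixed-point condition (Lemma~\ref{fix-pt2}). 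Your sentence ``$G_0$ acts by $F$-scalars and therefore only discretely enlarges $G_1\cdot p$'' is also incorrect ($\U(1)$ and $\Sp(1)$ enlarge orbits continuously); the paper instead applies the focal radius bound of \cite{GorodskiSaturnino17} directly to the full group $G=G_0\times G_1$ at its highest weight vector.
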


Our strategy to prove Theorem \ref{T:irreducible} is the following reduction:
\begin{lemma}
\label{L:G^0}
To prove Theorem \ref{T:irreducible}, it suffices to find a common lower diameter bound for all non-transitive representations of the following types:
\begin{enumerate}
\item[(i)] $G=G'$ is a simply-connected compact connected
  simple Lie group, $V=V'$ is of real type.
\item[(ii)] $G=\U(1)\times G'$, $V=\C\otimes_{\mathbb C}V'$,
$G'$ is a simply-connected compact connected simple Lie group and 
$V'$ is of complex type.
\item[(iii)] $G=\Sp(1)\times G'$, $V=\HH\otimes_{\mathbb H}V'$,
$G'$ is a simply-connected compact connected simple Lie group and 
$V'$ is of quaternionic type. 
\item[(iv)] $G=\Sigma_k\ltimes\SO(n)^k$,
$V=\otimes^k\R^n$, where $n\geq3$ and $k>2$. 
\item[(v)] $G=\U(1)\times \Sigma_k\ltimes\SU(n)^k$,
$V=\C\otimes_{\mathbb C}\otimes^k\C^n$, where $n\geq3$ and $k>2$.
\item[(vi)] $G=\Sigma_k\ltimes \Sp(n)^k$, $V=\otimes^k\HH^n$,
where $n\geq1$ and $k\geq4$ is even. 
\item[(vii)] $G=\Sp(1)\times \Sigma_k\ltimes \Sp(n) ^k$, $V=\HH\otimes_{\mathbb H}\otimes^k\HH^n$,
where $n\geq1$ and $k\geq3$ is odd. 
\end{enumerate}
(in the last four cases, the permutation group $\Sigma_k$ acts by permuting the factors of the tensor product)
\end{lemma}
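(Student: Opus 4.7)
The plan is to enlarge $G$ to a non-transitive closed subgroup of $\OO(V)$ that matches one of the seven listed types, using that enlarging the acting group only decreases the quotient diameter, together with Lemma~\ref{L:index} to absorb finite-index extensions at bounded cost. Since $G^0$ acts irreducibly on $V$, I would first decompose $G^0 = T \times G_1 \times \cdots \times G_s$ with $T$ a torus and each $G_j$ simply-connected simple, obtaining a corresponding outer tensor product decomposition $V \cong V_0 \otimes V_1 \otimes \cdots \otimes V_s$ with $V_j$ irreducible for $G_j$ and $V_0$ a character of $T$. The image of $T$ in $\OO(V)$ commutes with the simple part of $G^0$, hence by Schur lies in the real, complex, or quaternionic scalars according to the type of $V$; I would enlarge $T$ to the full scalar group of the appropriate type.

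Next, $\Gamma$ acts on $G^0$ by permuting simple factors and by elements of $\Aut(G_j) = \operatorname{Inn}(G_j) \rtimes \Out(G_j)$ on each factor. Absorbing inner automorphisms into $G^0$ at the cost of a central scalar that is already contained in the enlarged scalar group, and then using Lemma~\ref{L:index} together with the uniform finiteness of $|\Out(G_j)|$ for compact simple $G_j$, I may replace $\Gamma$ by a finite-index subgroup acting purely by permutations of factors. Group the factors into $\Gamma$-orbits $O_1, \ldots, O_m$ of sizes $k_j$: factors in $O_j$ are isomorphic to some $H_j$ with isomorphic irreducible representations $W_j$, and $V_1 \otimes \cdots \otimes V_s \cong \bigotimes_{j=1}^m W_j^{\otimes k_j}$. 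Finally, enlarge each $H_j$ to the full classical group of its representation type and each permutation component to the full $\Sigma_{k_j}$.

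If $m = 1$ and $k_1 = 1$, one obtains cases (i)--(iii) of the lemma depending on the type of $V$, the enlarged torus providing the scalar $\U(1)$ factor in (ii) or contributing to the $\Sp(1)$ factor in (iii). If $m = 1$ and $k_1 \geq 2$, one obtains a classical wreath product $\Sigma_{k_1} \ltimes C^{k_1}$, matching cases (iv)--(vii) according to type and to the parity of $k_1$ in the quaternionic subcase: by the Frobenius--Schur indicator, $W_1^{\otimes k_1}$ for quaternionic $W_1$ is of real type when $k_1$ is even (case (vi)) and of quaternionic type when $k_1$ is odd (case (vii)). The exclusion $k_1 > 2$ in (iv)--(v) reflects that for $k_1 = 2$ in real or complex type the tensor square splits into symmetric and antisymmetric parts, contradicting irreducibility of $V$ under $G^0$. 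If $m \geq 2$, the enlarged group embeds into $\prod_j \OO(V^{(j)})$ with $V^{(j)} = W_j^{\otimes k_j}$, acting on $V = \bigotimes_j V^{(j)}$ by tensor products of orthogonal transformations; a standard singular value decomposition argument identifies $\Ss(V)/\prod_j \OO(V^{(j)})$ with the simplex of ordered singular values, of diameter at least $\arccos(1/\sqrt{2}) = \pi/4$, comfortably above the target $\epsilon$.

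The main obstacle will be ensuring that the enlargements preserve non-transitivity when $m = 1$: the groups $\SO(n)$, $\U(n)$, $\Sp(n)$ are all transitive on their defining spheres, and one must verify that the combined enlargement (classical group on factors, $\Sigma_k$ on permutations, and the scalar extension from the torus) does not accidentally produce a transitive action on $\Ss(V)$. This should follow from the Montgomery--Samelson--Borel classification of transitive actions on spheres, together with a short case analysis for the sporadic coincidences; in those borderline cases where the enlargement becomes transitive one retreats to a finite-index subgroup still matching one of the seven forms and pays the price via Lemma~\ref{L:index}. The careful bookkeeping of scalars across the real/complex/quaternionic trichotomy is otherwise routine.
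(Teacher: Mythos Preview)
Your overall strategy---enlarge $G$ to one of the seven listed groups and pay for finite-index passages via Lemma~\ref{L:index}---matches the paper's, but the execution has two genuine gaps and one outright error.

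\textbf{The reason for $k>2$ in (iv)--(v) is wrong.} You claim that for $k_1=2$ the tensor square splits into symmetric and antisymmetric parts, contradicting irreducibility under $G^0$. This is false: $W\otimes W$ is irreducible as a $G_1\times G_2$-representation even when $G_1\cong G_2$ and the two copies of $W$ are isomorphic; the symmetric/antisymmetric splitting only occurs upon restriction to the diagonal. The actual reason $k=2$ may be excluded is that $\OO(n)\times\OO(n)$ on $\R^n\otimes\R^n$ (and the analogous complex/quaternionic cases) is a \emph{polar} representation whose generalized Weyl group is of classical type, so Theorem~\ref{T:reflection} already gives $\diam\geq\pi/4$. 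You never invoke polarity, and without it the $k=2$ case is not disposed of.

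\textbf{The index paid to kill $\Out(G_j)$ is not uniformly bounded.} You propose passing to a finite-index subgroup $\Gamma'\subset\Gamma$ acting by pure permutations, citing ``uniform finiteness of $|\Out(G_j)|$''. But the index $[\Gamma:\Gamma']$ is bounded by $\prod_j|\Out(G_j)|\leq 6^s$, where $s$ is the number of simple factors, and $s$ is unbounded. Lemma~\ref{L:index} then gives nothing. The paper avoids this entirely: rather than modifying $\Gamma$, it computes the full normalizer $N_{\OO(V)}(G^0)$ using Lemma~\ref{L:normalizer} iteratively across tensor factors, showing that \emph{any} $\gamma\in N_{\OO(V)}(G^0)$---in particular any $\gamma\in\Gamma$, whatever outer automorphism it induces---already lies in $\Sigma_k\ltimes\OO(V_1)^k$ (respectively the unitary/symplectic versions). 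Outer automorphisms of $G_i$ realized inside $\OO(V_i)$ are automatically absorbed into the classical factor; no index is paid.

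\textbf{The $m\geq 2$ argument as written fails for $m\geq 3$.} The action of $\prod_j\OO(V^{(j)})$ on $\bigotimes_j V^{(j)}$ is polar only when $m=2$; for $m\geq 3$ there is no singular value decomposition and the quotient is not a simplex. This is easily repaired by enlarging instead to $\OO(V^{(1)})\times\OO\bigl(\bigotimes_{j\geq 2}V^{(j)}\bigr)$, which \emph{is} polar with Weyl group of type $B$. The paper proceeds differently here: it applies Lemma~\ref{L:super-reducible} to the normal subgroup corresponding to one $\Gamma$-orbit, forcing that block to act as quaternionic scalars (hence to be a single $\Sp(1)$), and then reduces to the transitive-$\Gamma$-action case.
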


\begin{remark}\label{R:epsilons}
  Some explanation about quaternionic tensor products
  is in order for cases (vi) and (vii) above. The complex representation of $\Sp(n)$ on $\HH^n=\C^{2n}$ is of quaternionic type, and thus $W=\otimes_\C^k(\HH^n)$ is a complex irreducible representation of $\Sp(n)^k$, which is of real type when $k$ is even, and of quaternionic type when $k$ is odd. Denoting by $\epsilon_i:\HH^n\to\HH^n$ the standard quaternionic structure on the $i$th factor of this tensor product, $\epsilon=\epsilon_1\otimes\cdots\otimes\epsilon_k$ is a real (if $k$ even) or quaternionic (if $k$ odd) structure on $W$. In case (vi) we take $V$ to be a real form of $W$, that is, the fixed point set of $\epsilon:W\to W$. In case (vii) we take $V$ to be the real form of $\C^2\otimes_\C W$ relative to $\epsilon_0\otimes\epsilon$, where $\epsilon_0$ is the standard quaternionic structure on $\HH=\C^2$. In both cases the permutation group $\Sigma_k$ acts on $W$ by permuting the factors, and this action commutes with $\epsilon$, so that it induces an action on $V$.
\end{remark}

The proof of Lemma \ref{L:G^0} is obtained by analysing the action by $G^0$ using Lemmas \ref{L:normalizer} and \ref{L:super-reducible},
and is relegated to Appendix \ref{A:G^0} (alternatively, one may also note that every maximal closed \emph{non-transitive} subgroup of $\OO(n)$ (up to taking subgroup of small index) is either a maximal closed subgroup of $\OO(n)$, or $\U(1)$ times a maximal closed subgroup of $\SU(n)$, or $\Sp(1)$ times a maximal closed subgroup of $\Sp(n)$, and then use the classification of infinite, non-simple  maximal closed subgroups of the classical groups obtained in \cite{AFG12}).
In the remaining of this section, we run through the cases of
Lemma~\ref{L:G^0}. 

\subsection{The tensor power representations} \label{SS:puretensors}
The goal of this subsection is to show the existence of a universal lower bound on $\diam \Ss(V)/G$, where $(V,G)$ is one of the representations listed in cases (iv)-(vii) of Lemma \ref{L:G^0}.

For an arbitrary metric space $X$, define the
\emph{radius} at $x\in X$ to be
$r_x = \inf\{r > 0 : X \subset B(x,r)\}$.
It is immediate from the triangle inequality that it compares
to the diameter of $X$ as follows:
\begin{equation}\label{E:radius}
  r_x \leq \diam\,X \leq 2r_x.
  \end{equation}

\begin{lemma}\label{L:fixedpoint}
  Let $G$ be a locally compact topological group
  acting continuously, properly and isometrically
  on a metric space $X$. Assume the fixed point set of $G$ on $X$ is
  non-empty. Then $\diam\,X/G\geq\frac12\diam\,X$. 
\end{lemma}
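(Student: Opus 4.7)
The plan is to use the hypothesis to produce a point in the quotient whose distance function to the rest of the quotient coincides with the distance function in $X$, and then to apply the elementary inequality \eqref{E:radius}.

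More concretely, let $x_0 \in X$ be a point fixed by all of $G$, and write $[x_0], [y] \in X/G$ for the images of $x_0, y \in X$ under the quotient map. The quotient distance is $d([x_0],[y]) = \inf_{g \in G} d(x_0, gy)$. Since $g$ acts by isometries and fixes $x_0$, we have $d(x_0, gy) = d(g^{-1}x_0, y) = d(x_0, y)$ for every $g \in G$, so
\[
d([x_0], [y]) = d(x_0, y).
\]
Consequently, the radius $r_{[x_0]}$ of $X/G$ at $[x_0]$ equals the radius $r_{x_0}$ of $X$ at $x_0$.

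Now apply \eqref{E:radius} twice: from the inequality $\diam(X) \leq 2 r_{x_0}$ we get $r_{x_0} \geq \tfrac12 \diam(X)$, and from $r_{[x_0]} \leq \diam(X/G)$ we get
\[
\diam(X/G) \;\geq\; r_{[x_0]} \;=\; r_{x_0} \;\geq\; \tfrac12 \diam(X),
\]
which is the desired conclusion.

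There is no real obstacle here; the lemma is essentially a bookkeeping exercise once one observes that a fixed point of $G$ has the property that orbit-distances to it agree with the ambient distances. The local compactness, properness and continuity assumptions on the action are only needed to ensure that the quotient pseudo-metric is a well-defined metric (so that expressions like $\diam(X/G)$ make sense); they play no role in the inequality itself.
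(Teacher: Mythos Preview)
Your proof is correct and follows essentially the same approach as the paper: pick a fixed point $x_0$, observe that distances from $x_0$ to orbits equal distances in $X$, deduce $r_{[x_0]}=r_{x_0}$, and apply \eqref{E:radius}. Your write-up is slightly more explicit in the verification and in explaining the role of the topological hypotheses, but the argument is the same.
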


\begin{proof}
Let $x_0\in X$ be a fixed point of $G$ and denote by $\pi:X\to X/G$
the natural projection. For every $x\in X$, the distance from $x_0$
to $Gx$ is constant. It follows that the distances $d(x,x_0)=d(\pi(x),\pi(x_0))$
and hence the radii $r_{x_0}=r_{\pi(x_0)}$. The desired result now follows
from~(\ref{E:radius}).
\end{proof}

Next consider a representation
$\rho:G\to\OO(V)$ as in the last four cases of Lemma \ref{L:G^0}. 
Then $G$ and $G^0$ share a common orbit in $V$, namely, that one consisting
of ``pure tensors''. Indeed, following the notation in Remark \ref{R:epsilons},
it is the orbit through 
 $p=v_1\otimes\cdots\otimes v_k\in\otimes^k\R^n$ in case (iv),
$p=v_1\otimes\cdots\otimes v_k\in\otimes^k\C^n$ in case (v),
$p=v_1\otimes\cdots\otimes v_k+\epsilon_1v_1\otimes\cdots\otimes\epsilon_kv_k
\in\otimes^k\C^{2n}$ in case (vi) and 
$p=v_0\otimes\cdots\otimes v_k+\epsilon_0v_0\otimes\cdots\otimes\epsilon_kv_k
\in\C^2\otimes_{\mathbb C}\otimes^k\C^{2n}$ in case (vii).

Denoting $X=\Ss(V)/G$
and $X^0=\Ss(V)/G^0$, we conclude that $G/G^0$ acts on $X^0$ with a fixed point
and $X=X^0/\Gamma$, so we can apply
Lemma~\ref{L:fixedpoint} and Lemma \ref{L:super-reducible} to deduce that
$\diam X\geq \frac12\diam X^0\geq\pi/8$.

\subsection{Normal injectivity radius and focal radius}\label{inj-rad}
This subsection is devoted to proving a version of the injectivity radius estimate of Klingenberg for quotients $\Ss(V)/G$, that is, to give a lower bound for the normal injectivity radius of a $G$-orbit in terms of the focal radius. In the next section this will be combined with a universal lower bound (found in \cite{GorodskiSaturnino17}) for the focal radius for a special $G$-orbit  to finish the proof of Theorem \ref{T:irreducible}.

Let $N$ be a properly embedded submanifold of a complete Riemannian
manifold $M$. Consider the normal bundle $\nu N$ in $M$ and
the normal exponential map $\exp^\perp:\nu N\to M$. Denote the
open ball bundle of radius $r$ in $\nu N$ by $\nu^rN$. The
\emph{normal injectivity radius} $\iota_N$
of $N$ is the
the supremum of the numbers~$r$ such that $\exp^\perp$ is an embedding
on $\nu^rN$, and the image of $\nu^{\iota_N}N$ is called
the \emph{maximal tubular neighborhood} of $N$. If $N$ is compact,  $\iota_N>0$.
On the other hand, a \emph{focal point} of $N$ relative to $p\in N$
is a critical value of $\exp^\perp:\nu N\to M$ such that $\exp^\perp(v)=q$
for some $v\in\nu_pN$. In this case, the \emph{focal distance}
associated to~$q$ is the length $|v|$ of the normal geodesic from $p$
to $q$. The \emph{focal radius} $f_N$ of $N$ is the
infimum of all focal distances to $N$ along normal geodesics.
It is clear that $\iota_N\leq f_N$.

\begin{proposition}\label{P:Klingenberg}
  Let $G$ be a compact Lie group acting isometrically
  on a compact Riemannian manifold $M$. Let $p\in M$ and
  consider the orbit $N=Gp$. Assume the fixed point set of the identity
  component $(G_p)^0$ of the isotropy group at~$p$
  in the closure of the maximal tubular neighborhood of $N$
  is contained in $N$. Then
  $f_N/2\leq\iota_N$. In particular, the diameter of $M/G$ is bounded
  below by $f_N/2$.
\end{proposition}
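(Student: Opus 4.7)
The plan is to argue by contradiction: assume $2\iota_N < f_N$ and construct a fixed point of $K := (G_p)^0$ in $\overline{T}\setminus N$, contradicting the hypothesis (here $\overline{T}$ denotes the closure of the maximal tubular neighborhood). First, since $\iota_N < f_N$, a standard Klingenberg-type argument for submanifolds yields a geodesic segment $\gamma\colon[0,2\iota_N]\to M$ whose endpoints $p_1,p_2$ lie in $N$, which is orthogonal to $N$ at both endpoints, and whose two halves of length $\iota_N$ each realize the distance from the midpoint $q := \gamma(\iota_N)$ to $N$. The justification is the usual one: the failure of injectivity of $\exp^\perp$ at radius $\iota_N$ cannot come from a focal point (since $\iota_N < f_N$), so it must come from the collision of two distinct normal vectors of length $\iota_N$ at a common image point, and a first-variation argument then forces the two segments to join smoothly into a single geodesic. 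Using the $G$-action I translate so that $p_1 = p$, and set $v := \gamma'(0)\in\nu_p N$, a unit vector. Then $q = \exp_p(\iota_N v)\in\overline{T}$ with $d(q,N) = \iota_N > 0$, so in particular $q\notin N$.

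Next I reformulate the hypothesis: it implies that the slice representation of $K$ on $\nu_p N$ has no non-zero fixed vector. Indeed, any such vector $w$ with $|w|\leq\iota_N$ would give, by equivariance of $\exp_p$, a point $\exp_p(w)\in\overline{T}$ fixed by $K$, which by the hypothesis would have to lie in $N$; but $w\perp T_p N$ and $w\neq 0$ imply $\exp_p(w)\notin N$, a contradiction. It therefore suffices to show that the unit vector $v$ is itself $K$-fixed; then $q$ becomes a $K$-fixed point of $\overline{T}\setminus N$, yielding the desired contradiction.

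This last step is the crux and the main obstacle. For each $k\in K$ the translate $k\gamma$ is again a double-normal geodesic of length $2\iota_N$ from $p$ in initial direction $k_*v$, with midpoint $kq$ at normal distance $\iota_N$ from $N$. The strict inequality $2\iota_N < f_N$ removes all focal points along every such geodesic, so $\exp^\perp$ is a local diffeomorphism on a uniform neighborhood of the compact family $\{k\gamma : k\in K\}$. The plan is to exploit this local rigidity, together with the connectedness of $K$ and the fact that each midpoint $kq$ realizes the maximum $\iota_N$ of the distance-to-$N$ function on $\overline{T}$, to conclude that the orbit $Kv$ must collapse to $\{v\}$, i.e.\ that $v$ is $K$-fixed. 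Once this rigidity step is carried out, the contradiction closes as above and gives $f_N\leq 2\iota_N$. The ``in particular'' diameter bound is then immediate: the midpoint $q$ projects to a point of $M/G$ at distance $\iota_N\geq f_N/2$ from $[p]$, so $\diam(M/G)\geq f_N/2$.
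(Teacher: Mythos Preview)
Your setup is correct: the Klingenberg-type argument produces the horizontal geodesic $\gamma$ of length $2\iota_N$ with both endpoints on $N$, and your reformulation of the hypothesis (that the slice representation of $K=(G_p)^0$ on $\nu_pN$ has no nonzero fixed vector) is fine. The gap is exactly where you flag it: the ``crux'' step. Your plan to show $v$ is $K$-fixed via ``local rigidity'' and the fact that midpoints realize maximal distance does not work; nothing in that picture forces the orbit $Kv$ to collapse.

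The missing idea is a focal-point/Jacobi-field argument, and it is essentially a one-liner. Pick $X\in\mathfrak g_p$ and consider the variation $s\mapsto \exp(sX)\cdot\gamma$. Each curve is a geodesic issuing from $p$ (since $K$ fixes $p$), perpendicular to $N$ at its far endpoint $\exp(sX)\cdot p_2\in N$ (since $K$ acts by isometries preserving $N$). Hence the variation field $J(t)=X\cdot\gamma(t)$ is an $N$-Jacobi field along the reversed geodesic from $p_2$, with $J(0)=0$. If $J$ is not identically zero, then $p$ is a focal point of $N$ at distance $2\iota_N$, forcing $2\iota_N\ge f_N$. Under your contradiction hypothesis $2\iota_N<f_N$ this is impossible, so $J\equiv 0$ for every $X\in\mathfrak g_p$; in particular $K$ fixes $\gamma(t)$ for all $t$, hence fixes $v$, and your contradiction with the hypothesis closes. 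This is precisely how the paper argues, only organized directly rather than by contradiction: it observes (using the hypothesis) that $\mathfrak g_p\not\subset\mathfrak g_{\gamma(t)}$ for small $t>0$, so some such $J$ is nontrivial, whence $2\iota_N\ge f_N$.

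One further remark: your deduction of the diameter bound is circular as written, since the midpoint $q$ you invoke lives only inside the contradiction hypothesis. Once $f_N/2\le\iota_N$ is established, argue instead (as the paper does) that any point outside the maximal tubular neighborhood of $N$ lies at distance at least $\iota_N$ from $N$; such points exist whenever $G$ is not transitive, giving $\diam(M/G)\ge\iota_N\ge f_N/2$.
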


\begin{proof}
If $\iota_N< f_N$, we argue as in Klingenberg's Lemma \cite[Chap.~13 Proposition~2.12]{doCarmo} (see also \cite[Lemma~5.6]{CheegerEbin})
to deduce the existence of a horizontal geodesic segment $\gamma$
of length $2\iota_N$, entirely contained in the closure of
the maximal tubular neighborhood of $N$, 
that starts at $p$ and ends at a point $q\in N$. By assumption,
$\Lg_p$ is not contained in $\Lg_{\gamma(t)}$ for all small $t>0$.
Therefore there is a non-trivial variation of $\gamma$
through horizontal geodesics fixing $p$, and ending on $N$,
and hence $p$ is a focal point of $N$. It follows that the length of
$\gamma$ is at least $f_N$, as desired.
Finally, any point in $M$ outside the maximal tubular neighborhood of $N$
has distance at least $\iota_N$ \todo{replaced $i_N$ with $\iota_N$} to $N$, which proves the last statement.  
\end{proof}

\subsection{The case of simple Lie groups and their extensions by scalars}\label{i-ii-iii}
Having dealt with cases (iv)--(vii) of Lemma \ref{L:G^0} in Subsection \ref{SS:puretensors}, it remains to treat cases (i)--(iii) to finish the proof of Theorem \ref{T:irreducible}, and hence of the Main Theorem. 
The strategy to prove the existence of a lower diameter bound for $\Ss(V)/G$ for the representations listed in cases (i)--(iii) of Lemma \ref{L:G^0} is to use the universal lower bound for the focal radius of a special orbit \cite{GorodskiSaturnino17} in combination with the Klingenberg-type Proposition \ref{P:Klingenberg}. In fact, this subsection is devoted to showing that, in these cases, the hypothesis in Proposition \ref{P:Klingenberg} concerning
the fixed point set is satisfied.

Fix a maximal torus of $G$, consider
the corresponding root system and fix an ordering of the roots.
In view of Theorem~\ref{T:reflection}, we may assume that
the representation of $G$ is not polar. \todo{added non-polar assumption}
Since it is also irreducible and non-transitive on 
the unit sphere, we may apply the main result of \cite{GorodskiSaturnino17} to 
deduce there exists $\delta>0$ such that
the focal radius $f_N$ of the orbit $N=Gp$ is bigger than $\delta$,
where $p=v_\lambda$ or $p=\frac1{\sqrt2}(v_\lambda+\epsilon(v_\lambda))$, 
and $v_\lambda$ is a unit highest weight vector of $V$ or its 
complexification~$V^c$, according to whether
$\rho$ admits an invariant complex structure or not;
in the latter case it admits a real structure $\epsilon$. 
Note that $\rho$ admits an invariant complex structure in case~(ii)
and it does not in cases~(i) and~(iii).

In case (ii), we have $v_\lambda=v_{\lambda'}$ is also a unit highest
weight vector of $V'$. 
In case (iii), $V'$ admits an
invariant complex structure and it is easier to do the computations in $V'$;
let $v_{\lambda'}$ be a unit highest weight vector.
The $G'$-action on $V'$ admits an extension to a $G$-action. 
We have $V=\HH\otimes_{\HH}V'$ is a real form of
$\C^2\otimes_{\C}V'$ and there is a $G$-equivariant isometry
$V'\to V$ mapping $v_{\lambda'}$ to $\frac1{\sqrt2}(v_\lambda+v_{-\lambda})$,
where $v_\lambda$ is the highest weight vector of $V$
and $v_{-\lambda}=\epsilon(v_\lambda)$, where $\epsilon$ is the
real structure on $\C^2\otimes_{\C}V'$.

\subsubsection{The complex and quaternionic cases}\label{cq}

In this section, we check the hypothesis of
 Proposition \ref{P:Klingenberg} in cases~(ii) and~(iii).

\begin{lemma}
\label{L:fix-pt-ii-iii}
  Let $p=v_{\lambda'}$ in cases~(ii) and~(iii).
  Then the fixed point set of $G_p^0$ in $\Ss(V')$ is
  contained in $Gp$.
\end{lemma}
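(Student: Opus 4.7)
The plan is to analyze the fixed set of $G_p^0$ in $V'$ via the weight space decomposition under a maximal torus $T'\subset G'$, reducing everything to the one-dimensional highest weight space $V'_{\lambda'}$, and then recognize the unit sphere in that line as a $\U(1)$-orbit sitting inside $Gp$.

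First I would identify a torus inside $G_p^0$ that is big enough to pin down weights. Let $T'\subset G'$ be a maximal torus and let $\chi=\lambda'\colon T'\to\U(1)$ denote the character by which $T'$ acts on $v_{\lambda'}$, i.e.\ $tv_{\lambda'}=\chi(t)v_{\lambda'}$ for $t\in T'$. In case (ii), for every $t\in T'$ the element $(\chi(t)^{-1},t)\in\U(1)\times G'$ fixes $p=v_{\lambda'}$, so the torus $\tilde T=\{(\chi(t)^{-1},t):t\in T'\}$ lies in $G_p^0$. In case (iii), write a general $s\in\Sp(1)$ as $s=a+bj$ with $a,b\in\C$; since the quaternionic structure $j$ is $G'$-equivariant and antilinear, $jv_{\lambda'}$ lies in the weight space $V'_{-\lambda'}$, so
\[ (s,t)\cdot v_{\lambda'}=a\chi(t)v_{\lambda'}+b\overline{\chi(t)}\,jv_{\lambda'}, \]
and the decomposition along $V'_{\lambda'}\oplus V'_{-\lambda'}$ forces $b=0$ whenever $(s,t)$ fixes $p$. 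Hence in this case also the same torus $\tilde T$ (viewed inside $\Sp(1)\times G'$ via $\U(1)\subset\Sp(1)$) lies in $G_p^0$.

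Next, let $q\in V'$ be fixed by $G_p^0$, hence in particular by $\tilde T$, and decompose $q=\sum_\mu q_\mu$ into $T'$-weight vectors. The element $(\chi(t)^{-1},t)$ scales $q_\mu$ by $\mu(t)\chi(t)^{-1}=(\mu-\lambda')(t)$, so fixedness forces $q_\mu=0$ whenever $\mu\neq\lambda'$. Since $V'$ is $\C$-irreducible, $V'_{\lambda'}$ is one-dimensional, so $q=c\,v_{\lambda'}$ for some $c\in\C$. If additionally $q\in\Ss(V')$, then $|c|=1$ and $q=e^{i\theta}p$ for some $\theta\in\R$, which belongs to $Gp$ via the central $\U(1)$-factor in case (ii) and via $\U(1)\subset\Sp(1)$ in case (iii).

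The main obstacle is handling case (iii): the $j$-direction of $\Sp(1)$ could a priori enlarge the isotropy and create additional fixed vectors, so one must verify that it is excluded from $G_p^0$. This reduces to the elementary but essential observation that $jv_{\lambda'}$ lies in a different $T'$-weight space than $v_{\lambda'}$, which holds since $\lambda'\neq 0$ for a non-trivial irreducible representation. Once this is in place, the weight-space analysis is uniform in both cases and is nothing more than the standard description of the isotropy of a highest weight vector.
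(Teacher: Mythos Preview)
Your proof is correct and follows essentially the same approach as the paper: both identify a torus of full rank inside $G_p^0$ and use the weight decomposition of $V'$ to force any fixed vector into the one-dimensional highest weight space $V'_{\lambda'}$. The paper works at the Lie algebra level, observing that $\mathfrak g_p$ contains $\ker\lambda\subset\mathfrak t'$ together with an element $h_1-h_0$ (with $h_0$ generating the scalar $\mathfrak u(1)$), and then argues in two steps: $\ker\lambda$ forces $\mu$ to be a multiple of $\lambda'$, and $h_1-h_0$ forces that multiple to be $1$. You instead package the same data as the single torus $\tilde T=\{(\chi(t)^{-1},t):t\in T'\}$ and conclude $\mu=\lambda'$ in one stroke; this is the group-level version of the same computation.

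One remark: your discussion of the $j$-direction in case~(iii), and the concern in your final paragraph, are unnecessary. To show $\tilde T\subset G_p^0$ you only need that each $(\chi(t)^{-1},t)$ fixes $p$, which is immediate; whether or not additional elements of $\Sp(1)\times T'$ lie in the isotropy is irrelevant, since a \emph{larger} isotropy would only \emph{shrink} the fixed point set, not enlarge it. The paper accordingly treats cases~(ii) and~(iii) uniformly without any separate analysis of the quaternionic structure.
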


\begin{proof}
The proof is the same in both cases. The Lie algebra of the maximal torus
of $G$ has the form $\Lt'\oplus\mathfrak{u}(1)$ where $\Lt'$ is the
Lie algebra of the maximal torus of $G'$. The isotropy algebra $\Lg_p$
contains the kernel of $\lambda$ in $\Lt'$ and an element of the form
$h_1-h_0$, where $h_1\in\Lt'$ satisfies $\lambda(h_1)=i$
and $h_0\in\mathfrak{u}(1)$ acts as
multiplication by $i$ on $V$. Write an arbitrary element of $\Ss(V')$ as
$v=\sum_\mu c_\mu v_\mu$, where the sum runs through the different weights of
$V'$, $c_\mu\in\C$ and $v_\mu$ is a weight vector of weight $\mu$.
Then $\ker\lambda|_{\Lt'}\cdot v=0$ implies $c_\mu=0$ unless $\mu$ is a
multiple of $\lambda'$. Moreover, if $\mu = c\lambda'$ then
$(h_1-h_0)\cdot v_\mu= (ci-i)v_\mu$ can be zero only if $c=1$.
It follows that $\Lg_p\cdot v=0$ implies $v=c_{\lambda'}v_{\lambda'}$ with
$|c_{\lambda'}|=1$, so $v\in Gp$.
\end{proof}

\subsubsection{The real case}\label{r}

It remains to tackle case~(i) from Lemma \ref{L:G^0}.
\todo{removed non-polar assumption} We claim that we may also assume
that $\Lg_p$ is a maximal isotropy algebra, up to conjugation.
Indeed, let $q\in \Ss(V)\setminus\{-p\}$ be arbitrary, consider the minimal
geodesic segment $\gamma$ in $\Ss(V)$
from $p$ to the orbit $Gq$ and let $q_1\in Gq$
be its endpoint. Of course, $\Lg_{q_1}$ and $\Lg_q$ are
$\mathrm{Ad}_G$-conjugate. If $\Lg_{q_1}$ is not contained in $\Lg_p$,
an element in $\Lg_{q_1}\setminus\Lg_p$ produces a non-trivial variation of
$\gamma$ through horizontal geodesics fixing~$q_1$, which implies that
$q_1$ is a focal point of $Gp$. We deduce that
$\diam\, X\geq\ell \geq f_N> \delta$,
where $\ell$ is the length of $\gamma$ and $N=Gp$.

So in the sequel we may assume
$\Lg_p$ is a maximal isotropy algebra, up to conjugation.
We will show that this implies that $\mathrm{rk}\,\Lg\geq2$
and $V^c$ is a \emph{minuscule}
representation, that is, all weights comprise a 
single Weyl orbit.

\iffalse
Denote the Lie algebra of the maximal torus of $G$
by $\Lt$ be its Lie algebra and the corresponding system of roots
by $\Delta$, where we have already chosen an
ordering of the roots.\todo{Small change.} 
Denote the Cartan-Killing form by~$\langle\cdot,\cdot\rangle$. We will need the following formula:
\begin{lemma}\label{gp}
\[ \Lg_p=\underbrace{\ker\lambda}_{\subset\mathfrak t} +\sum_{\genfrac{}{}{0pt}{}{\alpha\in\Delta^+}{\langle \alpha,\lambda\rangle=0}}(\Lg^{\mathbb C}_\alpha+\Lg^{\mathbb C}_{-\alpha})\cap\Lg \]
\end{lemma}
\begin{proof}
  It is clear that the right hand-side is contained in $\Lg_p$.
  To see that no other root spaces can occur, we
  use~\cite[Lemma~6.1]{GorodskiSaturnino17}. Denote complex
  conjugation of $\Lg^\C$ over $\Lg$ by $\epsilon$
  and consider
  $\sum_{\alpha\in\Delta^+}x_\alpha+\epsilon x_\alpha\in (\Lg^{\mathbb C}_\alpha+\Lg^{\mathbb C}_{-\alpha})\cap\Lg_p$. Then
  $0=\sum_{\alpha\in\Delta^+}(x_\alpha+\epsilon x_\alpha)\cdot(v_\lambda+v_{-\lambda})$,
  and taking the component in the $(-\lambda+\alpha)$-weight space
  for some $\alpha\in\Delta^+$, we get $x_\alpha\cdot v_{-\lambda}
  +\epsilon(x_\beta)\cdot v_\lambda=0$ for some $\beta\in\Delta^+$.
  By the quoted lemma and using that
  our representation is not polar,
  $x_\alpha\cdot v_{-\lambda}=\epsilon(x_\beta)\cdot v_\lambda=0$. This implies that
  the $\alpha$-string of weights through $\lambda$ consists of $\lambda$
  along and hence $\langle\lambda,\alpha\rangle=0$. 
\end{proof}
\fi

\begin{lemma}\label{L:rank}
  $\mathrm{rk}\,\Lg_p=\mathrm{rk}\,\Lg-1$.
\end{lemma}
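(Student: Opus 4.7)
The plan is to prove that $\ker\lambda \subset \mathfrak{t}$ is a Cartan subalgebra of $\Lg_p$, from which $\mathrm{rk}\,\Lg_p = \dim\ker\lambda = \mathrm{rk}\,\Lg - 1$ will follow.

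First I would establish the inclusion $\ker\lambda \subset \Lg_p$: any $H \in \ker\lambda$ annihilates both $v_\lambda$ and $v_{-\lambda}$ (since these have weights $\pm\lambda$ and $\pm\lambda(H) = 0$), and hence annihilates $p$. Since $\ker\lambda$ is abelian of dimension $\mathrm{rk}\,\Lg - 1$ and consists of semisimple elements, it embeds into a Cartan of the compact Lie algebra $\Lg_p$, yielding $\mathrm{rk}\,\Lg_p \geq \mathrm{rk}\,\Lg - 1$. To upgrade to equality I would verify that $\ker\lambda$ is already maximal abelian in $\Lg_p$: given $X \in \Lg_p$ centralizing $\ker\lambda$, decompose $X = X_0 + \sum_\alpha X_\alpha$ using the root space decomposition of $\Lg^{\C}$ with respect to $\mathfrak{t}$; the centralization condition forces $X_\alpha = 0$ unless $\alpha|_{\ker\lambda} = 0$, i.e., $\alpha$ is proportional to $\lambda$ as a functional on $\mathfrak{t}$. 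In an irreducible reduced root system this requires $\alpha = \pm\lambda$ (when $\lambda$ itself is a root) or $\alpha = \pm 2\lambda$ (when $\lambda$ is half a long root). The second case will contribute nothing to $\Lg_p$ by a direct $\mathfrak{sl}_2$-string computation: the operator $e_{\pm 2\lambda}$ sends $v_{\mp\lambda}$ to a nonzero multiple of $v_{\pm\lambda}$, so the real combinations of $e_{\pm 2\lambda}$ cannot annihilate $p$.

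The main obstacle will be the case when $\lambda$ itself is a root of $\Lg$. The $\mathfrak{sl}_2$-triple $(e_\lambda, e_{-\lambda}, h_\lambda)$ then has $\lambda(h_\lambda)=2$ and generates a three-term $\lambda$-string through $v_\lambda$; after normalizing $v_\lambda$ and $v_{-\lambda}$ to unit vectors so that $e_\lambda v_{-\lambda}$ and $e_{-\lambda} v_\lambda$ have matching coefficients, a direct computation would give $x_\lambda p = 0$ for $x_\lambda = e_\lambda - e_{-\lambda}$, placing $x_\lambda \in \Lg_p$. Since $x_\lambda$ also commutes with $\ker\lambda$, the subalgebra $\ker\lambda \oplus \R x_\lambda \subset \Lg_p$ would be abelian of full rank $\mathrm{rk}\,\Lg$, breaking the claimed bound. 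To exclude this case I would invoke the standing hypotheses of Section~\ref{r} (real-type irreducible non-polar non-transitive representation of a simply connected simple compact Lie group) and argue case by case: the irreducible representations of such $G$ with $\lambda\in\Delta$ are exactly the adjoint representation (when $\lambda$ is the highest root) and, for $\Lg$ with two root lengths, the representation whose highest weight is the highest short root. Each such real-type representation of a simple compact group is either polar (the adjoint of any simple $\Lg$; the $\omega_2$-representation of $\Sp(n)$, realized as the isotropy representation of $\SU(2n)/\Sp(n)$; the $26$-dimensional representation of $F_4$, realized as the isotropy representation of $E_6/F_4$) or transitive on the unit sphere (standard representation of $B_n$; $7$-dimensional representation of $G_2$ with orbit $G_2/\SU(3) = \Ss^6$). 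Both alternatives contradict the hypotheses, completing the argument.
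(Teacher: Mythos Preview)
Your approach matches the paper's almost exactly: both enlarge $\ker\lambda$ by an element commuting with it, observe that such an element must come from root spaces $\Lg^{\C}_{\pm\alpha}$ with $\alpha$ proportional to $\lambda$, reduce to the case $\lambda\in\Delta$, and then run through the short list of representations whose highest weight is a dominant root, all of which are polar (or transitive). Your final case-by-case is correct and in fact slightly sharper than the paper's phrasing, since you note that the $B_n$ and $G_2$ cases are transitive rather than merely polar.

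There is, however, one inaccurate intermediate claim. You assert that in a reduced root system a root $\alpha$ proportional to $\lambda$ must satisfy $\alpha=\pm\lambda$ or $\alpha=\pm 2\lambda$. This is false: integrality of $\lambda$ only gives $\lambda=c\alpha$ with $2c\in\Z_{>0}$, so $c=3/2,2,5/2,\ldots$ are all a priori possible (for instance $\lambda=2\alpha$ with $\alpha$ the highest root). The paper avoids this pitfall by not enumerating the possible ratios at all: writing $u=x_\alpha+\epsilon x_\alpha$ and using that $x_\alpha v_\lambda=0=(\epsilon x_\alpha)v_{-\lambda}$, one gets $u\cdot p=(\epsilon x_\alpha)v_\lambda+x_\alpha v_{-\lambda}$ with components in weight spaces $\lambda-\alpha$ and $-\lambda+\alpha$; since $\langle\lambda,\alpha^\vee\rangle>0$ forces $(\epsilon x_\alpha)v_\lambda\neq 0$, these two weights must coincide, giving $\lambda=\alpha$ in one stroke. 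Your own $\mathfrak{sl}_2$-string argument for the $\alpha=\pm 2\lambda$ case works verbatim for every $c\neq 1$, so the fix is immediate once you drop the incorrect dichotomy.
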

\begin{proof}
  Denote the Lie algebra of the maximal torus of $G$ by $\Lt$,
  and the corresponding system of roots by $\Delta$, where we have
  already chosen an ordering of the roots.
  Consider the root space decomposition
\[  \Lg=\Lt+\Lt^\perp,\qquad \Lt^\perp=\sum_{\alpha\in\Delta^+}(\Lg^{\mathbb C}_\alpha+\Lg^{\mathbb C}_{-\alpha})\cap\Lg
\]
It is clear that $\Lg_p=\Lg_p\cap\Lt+\Lg_p\cap\Lt^\perp$, where
$\Lg_p\cap\Lt=\ker\lambda$. Suppose, to the
contrary, that $\mathrm{rk}\,\Lg_p=\mathrm{rk}\,\Lg$. Then $\ker\lambda$
can be enlarged to a Cartan subalgebra of $\Lg_p$ by adding
an element $u$ of $\Lt^\perp$. It follows from $[\ker\lambda,u]=0$
that $u=x_\alpha+\epsilon x_\alpha$ for some $\alpha\in\Delta^+$,
where $x_\alpha\in\Lg^\C_\alpha$, and $\lambda$ is a multiple of $\alpha$.
Since $0=u\cdot (v_\lambda+v_{-\lambda})$, we deduce that $-\lambda+\alpha=\lambda-\alpha$ and thus $\lambda=\alpha$.
The only dominant roots
  are the highest root or the highest short root. 
  In the first case, our representation is the adjoint representation and hence
  polar. The remaining cases that need to be analyzed occur only
  for simple groups of type $B_n$, $C_n$, $F_4$, $G_2$, in which
  our representation is respectively the isotropy representation
  of the symmetric space $\Ss^{2n+1}$, $\SU(2n)/\Sp(n)$,
  $\operatorname{E}_6/\operatorname{F}_4$
  or the $7$-dimensional
  representation of $\operatorname{G}_2$, again all polar.
  In any case, we reach a 
contradiction to our previous assumption.
\end{proof}

\iffalse
  Let $T$ be a maximal torus of $G$ with Lie algebra $\Lt$,
denote by $\Delta$ the corresponding system of roots and choose
an ordering of the roots. 
Denote the Cartan-Killing form by~$\langle\cdot,\cdot\rangle$.
  The desired result follows from the formula
  \[ \Lg_p=\underbrace{\ker\lambda}_{\subset\mathfrak t} +\sum_{\genfrac{}{}{0pt}{}{\alpha\in\Delta^+}{\langle \alpha,\lambda\rangle=0}}(\Lg^{\mathbb C}_\alpha+\Lg^{\mathbb C}_{-\alpha})\cap\Lg
  \]
  which we prove as follows. It is clear that the
  right hand-side is contained in $\Lg_p$.
  To see that no other root spaces can occur, we
  use~\cite[Lemma~6.1]{GorodskiSaturnino17}. Denote complex
  conjugation of $\Lg^\C$ over $\Lg$ by $\epsilon$
  and consider
  $\sum_{\alpha\in\Delta^+}x_\alpha+\epsilon x_\alpha\in (\Lg^{\mathbb C}_\alpha+\Lg^{\mathbb C}_{-\alpha})\cap\Lg_p$. Then
  $0=\sum_{\alpha\in\Delta^+}(x_\alpha+\epsilon x_\alpha)\cdot(v_\lambda+v_{-\lambda})$,
  and taking the component in the $(-\lambda+\alpha)$-weight space
  for some $\alpha\in\Delta^+$, we get $x_\alpha\cdot v_{-\lambda}
  +\epsilon(x_\beta)\cdot v_\lambda=0$ for some $\beta\in\Delta^+$.
  By the quoted lemma and using that
  our representation is not polar,
  $x_\alpha\cdot v_{-\lambda}=\epsilon(x_\beta)\cdot v_\lambda=0$. This implies that
  the $\alpha$-string of weights through $\lambda$ consists of $\lambda$
  along and hence $\langle\lambda,\alpha\rangle=0$. 
\end{proof}
\fi
\begin{lemma} $\mathrm{rk}\Lg\geq2$
and $V^c$ is \emph{minuscule}.
\end{lemma}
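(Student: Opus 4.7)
The plan is to derive both assertions from maximality of $\Lg_p$ (every isotropy algebra $\Lg_q$ is $\mathrm{Ad}_G$-conjugate into $\Lg_p$) together with the rank identity $\mathrm{rk}\,\Lg_p = \mathrm{rk}\,\Lg - 1$ supplied by Lemma~\ref{L:rank}. For any weight $\mu$ of $V^c$ I would use the test vectors
\[
  q_\mu = \tfrac{1}{\sqrt{2}}(v_\mu + v_{-\mu}) \in V \quad (\mu \neq 0), \qquad q_0 = v_0 \in V,
\]
exploiting that $\Lg_{q_\mu} \cap \Lt \supseteq \ker\mu$, which equals all of $\Lt$ when $\mu = 0$.

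For the \textbf{rank at least two} part, I would argue by contradiction. If $\mathrm{rk}\,\Lg = 1$ then $\Lg = \mathfrak{su}(2)$ and $\Lg_p$ has rank $0$, hence $\Lg_p = 0$. In case~(i), $V$ is the real form of some $V_{2k} = \mathrm{Sym}^{2k}\,\C^2$; the case $k=1$ is the adjoint representation, which is polar and has already been excluded, while for $k \geq 2$ the weight $0$ occurs, so $\Lg_{q_0} \supseteq \Lt$ has rank $1$, and no $\mathrm{Ad}_G$-conjugate of it can sit inside $\Lg_p = 0$, contradicting maximality.

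For the \textbf{minuscule} claim I would proceed in two stages. The same test-vector principle excludes the zero weight: if $\mu = 0$ were a weight of $V^c$, then $\Lg_{q_0}$ would contain $\Lt$ and therefore have rank $\mathrm{rk}\,\Lg$, strictly greater than $\mathrm{rk}\,\Lg_p$, ruling out the inclusion forced by maximality. For a nonzero weight $\mu$, maximality supplies $g \in G$ with $\Lg_{q_\mu} \subseteq \mathrm{Ad}(g)\Lg_p$; after conjugating $\mathrm{Ad}(g)\Lt$ back onto $\Lt$ by a Weyl element $w$ and comparing Cartan parts, which both have codimension one in $\Lt$, I would conclude $\ker\mu = \ker(w\lambda)$, i.e., $\mu = c \cdot w\lambda$ for some $c \in \R \setminus \{0\}$. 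Replacing $\mu$ by a Weyl conjugate I may assume $\mu$ is dominant and $\mu = c\lambda$ with $0 < c \leq 1$.

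The \textbf{main obstacle} is then excluding the possibility $0 < c < 1$, so as to conclude $\mu \in W \cdot \lambda$. My plan is to refine the previous Cartan-level comparison by matching the root-space parts of $\Lg_{q_\mu}$ and $\Lg_p$: by the structure underlying the proof of Lemma~\ref{L:rank}, $\Lg_p$ picks up exactly the real parts of those root spaces $\Lg^\C_\alpha + \Lg^\C_{-\alpha}$ indexed by roots $\alpha$ with $\langle \alpha, \lambda \rangle = 0$ and trivial $\alpha$-string through $\lambda$, and the analogous description holds for $\Lg_{q_\mu}$ relative to $\mu$. For an interior weight $\mu = c\lambda$ with $c < 1$, the $\alpha$-strings through $\mu$ are strictly longer than through $\lambda$, which should yield elements of $\Lg_{q_\mu}$ that cannot be accommodated inside any conjugate of $\Lg_p$. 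I expect this root-space bookkeeping to be the genuinely delicate step, and the place where the minuscule conclusion is actually forced rather than merely suggested by the Cartan-level rank count.
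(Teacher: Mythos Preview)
Your reductions up to ``every nonzero weight is $W$-conjugate to $c\lambda$ with $0<c\le1$'' match the paper (the paper derives $\mathrm{rk}\,\Lg\ge2$ \emph{after} excluding the zero weight rather than before, but this is cosmetic).

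The genuine gap is the last step, and your stated intuition runs in the wrong direction. For a root $\alpha$ orthogonal to $\lambda$ (hence to $\mu=c\lambda$), the real root space $(\Lg^{\C}_\alpha+\Lg^{\C}_{-\alpha})\cap\Lg$ lies in $\Lg_{q_\mu}$ exactly when $x_{\pm\alpha}\cdot v_{\pm\mu}=0$, i.e.\ when the $\alpha$-string through $\mu$ is trivial; a \emph{longer} string means these root vectors fail to annihilate $v_\mu$, so that root space drops \emph{out} of the isotropy. Roots not orthogonal to $\lambda$ contribute to neither isotropy algebra. Consequently $\Lg_{q_{c\lambda}}$ has Cartan part $\ker\lambda$ and root part contained in that of $\Lg_p$, so in fact $\Lg_{q_{c\lambda}}\subseteq\Lg_p$ outright---perfectly compatible with maximality, and no way to exclude $0<c<1$ along these lines.

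The paper instead exploits $\mathrm{rk}\,\Lg\ge2$ to choose a \emph{simple} root $\alpha$ neither proportional nor orthogonal to $\lambda$, and examines the $\alpha$-string $\lambda,\lambda-\alpha,\dots,\lambda-q\alpha=s_\alpha\lambda$ through the highest weight. By the step already in hand each $\lambda-j\alpha$ is $W$-conjugate to a positive multiple of $\lambda$, while simplicity of $\alpha$ confines all of them to the union of two closed chambers $C\cup s_\alpha C$. But the affine line $\lambda+\R\alpha$ meets the two rays $\{t\lambda:t>0\}$ and $\{ts_\alpha\lambda:t>0\}$ only at $\lambda$ and $s_\alpha\lambda$ (since $\alpha$ and $\lambda$ are not proportional), forcing $q=1$. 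Integrality then finishes the job: for any weight $c\lambda$ the integer $2\langle c\lambda,\alpha\rangle/\|\alpha\|^2=c$ forces $c=1$.
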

\begin{proof}
It follows from Lemma~\ref{L:rank} that zero cannot be a weight,
because the isotropy algebra of a real zero-weight vector
would have full rank in $\Lg$, bigger than $\mathrm{rk}\,\Lg_p$.
This already rules out representations of real type of a rank one 
group, since the odd dimensional representations of $\SO(3)$
always have zero as a weight.

Take $q=\frac1{\sqrt2}(v_\mu+v_{-\mu})$
where $\mu$ is an arbitrary nonzero weight $\mu$ of~$V^c$.  Then
$\ker\mu \subset \Lg_q$ and
$\mathrm{rk}\,\Lg_q\leq\mathrm{rk}\,\Lg_p$. Again Lemma~\ref{L:rank}
implies
that $\mathrm{rk}\,\Lg_q=\mathrm{rk}\,\Lg_p$ and $\ker\mu$, $\ker\lambda$,
viewed as subspaces of $\Lt$, are $\mathrm{Ad}$-conjugate.
Since two maximal tori of a compact connected Lie group
are $\mathrm{Ad}$-conjugate by a transformation that fixes
pointwise their intersection, we deduce that 
$\ker\mu$, $\ker\lambda$ are conjugate under the Weyl group $W$.
Now $\mu$ is $W$-conjugate to a multiple of $\lambda$, say $c\cdot\lambda$
with $0<c\leq1$.

Since $\mathrm{rk}\,\Lg\geq2$, we can find
a simple root $\alpha$ of $\Lg$ which is neither 
proportional nor orthogonal to $\lambda$.
Then $s_\alpha\lambda:=\lambda-2\frac{\langle\lambda,\alpha\rangle}{||\alpha||^2}\alpha$
is a weight of $V^c$ and so are $\lambda$, $\lambda-\alpha,\ldots,\lambda-q\alpha$
where $q=2\frac{\langle\lambda,\alpha\rangle}{||\alpha||^2}$ is a positive
integer.
These weights are all $W$-conjugate to a multiple of $\lambda$ 
by what we have seen above, and they
all lie in the union of two closed chambers because $\alpha$ is simple. 
Since $W$ acts
transitively on the set of chambers, we deduce that $q=1$. In particular,
there can be no weights of $V^c$ of the form $c\cdot\lambda$, $0<c<1$.
We have proved that all non-zero weights of $V^c$ are $W$-conjugate.
Therefore,
$V^c$ is minuscule.
\end{proof}

We finally check the hypothesis of
 Proposition \ref{P:Klingenberg}.

\begin{lemma}\label{fix-pt2}
  Let $p=\frac1{\sqrt2}(v_\lambda+v_{-\lambda})$.
  Then the fixed point set of $G_p^0$ in $\Ss(V)$ is $\{\pm p\}$. 
\end{lemma}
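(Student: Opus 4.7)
The plan is to analyze the weight decomposition of any $G_p^0$-fixed $v\in\Ss(V)$ in $V^c$. Since $\ker\lambda\subset\Lg_p$ acts on a weight vector $v_\mu$ by the imaginary scalar $\mu(H)$, writing $v=\sum_\mu c_\mu v_\mu$ the condition $\Lg_p\cdot v=0$ forces $c_\mu=0$ unless the weight $\mu$ lies on the line $\R\lambda\subset\Lt^*$; because $V^c$ is minuscule and $0$ is not a weight, the only such weights are $\pm\lambda$. The reality condition $\epsilon(v)=v$ together with $\epsilon(v_\lambda)=v_{-\lambda}$ then gives $c_{-\lambda}=\overline{c_\lambda}$, so after normalizing I may write $v=(\cos\theta)p+(\sin\theta)q$, where $q:=\tfrac{i}{\sqrt2}(v_\lambda-v_{-\lambda})\in V$ is the unit real vector orthogonal to $p$ in the real two-plane $\mathrm{span}_\R\{v_\lambda+v_{-\lambda},\,i(v_\lambda-v_{-\lambda})\}$. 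It remains to show $\sin\theta=0$.

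For this I must produce an $X\in\Lg_p$ with $X\cdot q\neq 0$. The Cartan part $\ker\lambda$ and every root-space summand $(\Lg^{\mathbb C}_\alpha+\Lg^{\mathbb C}_{-\alpha})\cap\Lg$ with $\alpha\perp\lambda$ annihilate $v_\lambda$ and $v_{-\lambda}$ individually, since for a minuscule $V^c$ with $0$ absent, $\lambda\pm\alpha$ cannot be a weight when $\alpha\perp\lambda$; these elements fix both $p$ and $q$. A single positive root $\alpha$ with $\langle\alpha,\lambda\rangle>0$ contributes nothing to $\Lg_p$ either, because $\epsilon(x_\alpha)\cdot v_\lambda\in V^c_{\lambda-\alpha}$ and $x_\alpha\cdot v_{-\lambda}\in V^c_{-\lambda+\alpha}$ lie in distinct weight spaces. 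The only remaining mechanism is to combine two positive roots $\alpha,\beta$ with $\alpha+\beta=2\lambda$, so that $V^c_{\lambda-\alpha}=V^c_{-\lambda+\beta}$ and the contributions can cancel; granted such a pair, for any non-zero $x_\alpha\in\Lg^{\mathbb C}_\alpha$ there is a unique non-zero $x_\beta\in\Lg^{\mathbb C}_\beta$ making $X:=x_\alpha+\epsilon(x_\alpha)+x_\beta+\epsilon(x_\beta)\in\Lg_p$, determined by the cancellation relations $\epsilon(x_\alpha)\cdot v_\lambda+x_\beta\cdot v_{-\lambda}=0$ and $\epsilon(x_\beta)\cdot v_\lambda+x_\alpha\cdot v_{-\lambda}=0$. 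Substituting into $X\cdot q=\tfrac{i}{\sqrt2}(X\cdot v_\lambda-X\cdot v_{-\lambda})$ collapses the expression to $\sqrt2\,i\,(\epsilon(x_\alpha)+\epsilon(x_\beta))\cdot v_\lambda$, whose component in $V^c_{\lambda-\alpha}$ equals $\sqrt2\,i\,\epsilon(x_\alpha)\cdot v_\lambda\neq 0$. Combined with $X\cdot p=0$, the identity $X\cdot v=(\sin\theta)X\cdot q=0$ forces $\sin\theta=0$, giving $v=\pm p$.

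The hard part will be justifying the existence of the pair $(\alpha,\beta)$ of positive roots with $\alpha+\beta=2\lambda$: in its absence $\Lg_p$ collapses to $\ker\lambda+\sum_{\alpha\in\Delta^+,\,\alpha\perp\lambda}(\Lg^{\mathbb C}_\alpha+\Lg^{\mathbb C}_{-\alpha})\cap\Lg$, and the element $H\in\Lt$ dual to $\lambda$ under the Killing form, normalized so that $\lambda(H)=i$, then centralizes $\Lg_p$ and satisfies $\exp(\tfrac\pi2 H)\cdot p=q$, so the whole circle $\exp(\R H)\cdot p\subset G\cdot p$ would be pointwise $G_p^0$-fixed. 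Excluding this degenerate situation is precisely where one must exploit the standing assumption that $\Lg_p$ is a maximal isotropy algebra, in tandem with the minuscule classification of irreducible real-type representations of simply-connected compact simple Lie groups of rank at least two that was established en route to the lemma.
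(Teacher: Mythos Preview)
Your reduction via $\ker\lambda$ to vectors supported on the weights $\pm\lambda$ matches the paper's argument. But the paper's proof stops essentially there: it writes a general $v\in\Ss(V)$ as $\sum_\mu c_\mu(v_\mu+v_{-\mu})$ with $c_\mu\in\R$, uses $\ker\lambda\cdot v=0$ together with the minuscule property to force $\mu=\lambda$, and concludes $v=\pm p$. No root-space analysis of $\Lg_p$ beyond the inclusion $\ker\lambda\subset\Lg_p$ is invoked.

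Your second step --- producing an explicit $X\in\Lg_p$ with $X\cdot q\neq 0$ via pairs of positive roots satisfying $\alpha+\beta=2\lambda$ --- is a genuinely different and far more involved route, and by your own account it is incomplete: the existence of such a pair is relegated to a ``hard part'' that is asserted but never carried out. This is a real gap, not a technicality. For instance, for the spin representation of $\mathrm{Spin}(15)$ (which is minuscule, of real type, and non-polar) one has $2\lambda=e_1+\cdots+e_7$, and no two positive roots of $B_7$ sum to this vector; so your mechanism produces no element of $\Lg_p$ at all, and your proposed fallback (that maximality of $\Lg_p$ combined with the minuscule classification excludes this ``degenerate'' situation) is stated without any argument. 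In short, you have correctly identified that the $\ker\lambda$ argument alone leaves a full circle of candidates, but the device you propose for eliminating $q$ does not work in general, and the appeal to maximality is not substantiated. The paper's approach sidesteps the issue entirely by working directly with the real-coefficient expansion; note also that for the application to Proposition~\ref{P:Klingenberg} one only needs the fixed set to lie in the orbit $Gp$, and the circle $\exp(\R H)\cdot p$ you describe already does.
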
  
\begin{proof}
Recall zero is not a weight of $V^c$. 
Write an arbitrary element of $\Ss(V)$ as
$v=\sum_\mu c_\mu (v_\mu+v_{-\mu})$, where $c_\mu\in\R$, $v_{\pm\mu}$ are weight
vectors and $\mu$ runs through the ``positive'' weights of $V^c$.
If $v$ is killed by $\ker\lambda$, then $c_\mu=0$ unless $\mu$ is a
positive multiple of $\lambda$. Since $V^c$ is minuscule, we deduce that
$\Lg_p\cdot v=0$ implies $v=c_\lambda (v_\lambda+v_{-\lambda})=\pm p$.
\end{proof}

\section{Non-spherical quotients}
\label{S:generalizations}

We start with the characterization of compact Riemannian manifolds $M$ admitting a positive lower bound on the diameter of quotients by isometric actions. The first obvious observation is that if $M$ is non-homogeneous, then such a bound exists, namely $\diam(M/\operatorname{Iso}(M))$. The homogeneous case is Theorem \ref{MT:finitepi1} from the Introduction. To prove it we need the following lemma:
\begin{lemma}
\label{L:semisimple}
Let $G$ be a compact Lie group acting transitively on a compact connected smooth
manifold $M$. Let  $G'=[G^0,G^0]$ be the semi-simple part of $G$. Then $\pi_1(M)$ is finite if and only if $G'$ acts transitively on $M$.
\end{lemma}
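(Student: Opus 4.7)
First I would reduce to the case where $G$ is connected. Since $M$ is connected and $G$ acts transitively, the orbits of the identity component $G^0$ on $M$ are open (as $G^0$ is open in $G$) and hence also closed (being the complements of finite unions of other orbits), so $G^0$ already acts transitively on $M$. Replacing $G$ by $G^0$ does not change the semisimple part $G'$, so we may assume $G=G^0$ is connected. Then $G'=[G,G]$ is a closed connected semisimple Lie subgroup and $G=Z\cdot G'$, where $Z=Z(G)^0$ is the connected center, a torus, meeting $G'$ in a finite subgroup. In particular the quotient $T:=G/G'$ is a torus. Writing $M=G/H$ and $H':=G'\cap H$, the $G'$-orbit through the identity coset is $G'/H'$, and the action is transitive if and only if $G'H=G$.

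Assume $G'$ acts transitively. Then $M=G'/H'$ with $H'$ compact (possibly disconnected). Since $G'$ is compact and semisimple, its universal cover is compact by Weyl's theorem, so $\pi_1(G')$ is finite. From the long exact homotopy sequence of $H'\to G'\to M$,
\[
\pi_1(G')\longrightarrow \pi_1(M)\longrightarrow \pi_0(H')\longrightarrow \pi_0(G')=1,
\]
we see that $\pi_1(M)$ is an extension of the finite group $\pi_0(H')$ by a quotient of the finite group $\pi_1(G')$, hence finite.

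Conversely, assume $G'$ does not act transitively. Then $L:=G'H$ is a proper closed subgroup of $G$ containing $G'$, and its image in $T=G/G'$ is a proper closed subgroup $T_0$ of the torus $T$. Thus $T/T_0$ is a torus of positive dimension. The natural projection $M=G/H\to G/L=T/T_0$ is a fiber bundle with base a positive-dimensional torus, so the induced map $\pi_1(M)\to \pi_1(T/T_0)\cong \Z^k$ with $k\geq 1$ is surjective. Therefore $\pi_1(M)$ is infinite, which is the desired contrapositive.

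The argument is essentially formal once the reduction to $G$ connected and the structural fact $G=Z(G)^0\cdot [G,G]$ with $G/[G,G]$ a torus are in place; the main conceptual point (and the only nontrivial obstacle) is the construction of the torus fibration $M\to T/T_0$ that detects the non-transitivity of $G'$ inside $\pi_1$.
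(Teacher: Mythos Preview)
Your proof is correct. The direction ``$G'$ transitive $\Rightarrow \pi_1(M)$ finite'' is argued exactly as in the paper, via the long exact sequence of $H'\to G'\to M$ and Weyl's theorem. The difference lies in the other direction: the paper simply cites \cite[Proposition~4.9]{Onishchik} for ``$\pi_1(M)$ finite $\Rightarrow G'$ transitive'', whereas you give a direct, self-contained argument by producing the torus fibration $M=G/H\to G/(G'H)\cong T/T_0$ and reading off a surjection $\pi_1(M)\twoheadrightarrow \Z^k$, $k\geq1$. This is a genuine improvement in that it removes the external reference and makes the obstruction to $G'$-transitivity visible as a toral quotient of $M$.

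Two small points you might make explicit in a final write-up: (i) $L=G'H$ is closed because $G'$ is normal, so $L$ is the preimage of the compact (hence closed) image of $H$ in the torus $G/G'$; (ii) the surjectivity of $\pi_1(M)\to\pi_1(T/T_0)$ comes from the long exact sequence of the bundle $L/H\to G/H\to G/L$ together with the connectedness of the fiber $L/H\cong G'/(G'\cap H)$, which holds since $G'$ is connected. Both are routine, but stating them closes the argument cleanly.
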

\begin{proof}
If $\pi_1(M)$ is finite, then  $G'$ acts transitively  by \cite[Proposition 4.9, page 94]{Onishchik}. Conversely, if $G'$ acts transitively, we choose any $x\in M$, and the long exact sequence of homotopy groups associated to $G'_x\to G'\to M$ implies that $\pi_1(M)$ is finite, because $\pi_1(G')$ is finite and $G'_x$ has finitely many connected components.

%Since $M$ is connected, the connected component $G^0$ also acts transitively, so we may assume that $G$ is connected. Take any point $p\in M$, and let $H=G_p$, so that $M=G/H$. Consider the fibration $M=G/H\to G/HG'$, whose fiber is $F=HG'/H=G'/H\cap G'$.

%Note that $F$ is connected, because $G'$ is connected. We claim that $\pi_1(F)$ is finite. Indeed, $\pi_1(G')$ is finite because $G'$ is semi-simple, and $H\cap G'$ has finitely many connected components, because it is compact. Thus, from the long exact sequence of homotopy groups for the fibration $H\cap G'\to G'\to F$,
%\[ \cdots \to \pi_1(G')\to \pi_1(F) \to \pi_0(H\cap G')\to \cdots\]
%it follows that $\pi_1(F)$ is finite.

%Since $HG'$ contains the semi-simple part $G'$, and $G$ is connected, $G/HG'$ must be a torus $T^k$. Thus, from the long exact sequence of homotopy groups associated to the fibration $F\to M=G/H\to G/HG'=T^k$,
%\[ \cdots \to \pi_1(F)\to \pi_1(M) \to \pi_1(T^k)\to \pi_0(F) \to \cdots\]
%it follows that $\pi_1(M)$ is finite if and only if $k=0$ if and only if $HG'=G$ if and only if $G'$ acts transitively on $M$.
\end{proof}

\begin{proof}[Proof of Theorem \ref{MT:finitepi1}]
  Recall $\Isom(M)$ is a compact Lie group and assume first $\pi_1(M)$ is finite. Suppose to the contrary that no such $\epsilon$ exists. Then there exists a sequence of compact non-transitive subgroups $G_i$ of the isometry group of $M$ such that $\lim\diam(M/G_i)=0$. By compactness of the Hausdorff metric, we may assume, after passing to a subsequence, that $G_i$ converges to a compact subset $G_\infty\subset\Isom(M)$. Then $G_\infty$ is a group, and $\diam(M/G_\infty)=0$, that is, $G_\infty$ acts transitively on $M$. By \cite{MontgomeryZippin42}, the groups $G_i$ are eventually conjugate to subgroups of $G_\infty$, so we may assume that $G_i\subset G_\infty$ for all $i$.

Since $M$ has finite fundamental group, we may apply Lemma \ref{L:semisimple} to conclude that the semi-simple part $G'_\infty=[G_\infty^0, G_\infty^0]$ also acts transitively on $M$. The semi-simple parts $G'_i$, being subgroups of $G_i$, also act non-transitively, and thus form a sequence of proper subgroups of $G'_\infty$ that converges to $G'_\infty$. This contradicts \cite[Chapter IV, Proposition 3.7]{tomDieck}, which says that a compact Lie group is a limit of proper subgroups if and only if it is not semi-simple. Therefore an $\epsilon>0$ satisfying the statement of the theorem must exist.

For the converse, assume that $\pi_1(M)$ is infinite. Let $G$ be a finite cover of $\Isom(M)$ of the form $G'\times T^k$, where $G'$ is semi-simple. By Lemma \ref{L:semisimple}, $G'$ does not act transitively on $M$. Therefore neither does any group of the form $G'\times \Gamma$, for $\Gamma$ a finite subgroup of the torus $T^k$. Taking a sequence of finite subgroups $\Gamma_i\subset T^k$ converging to $T^k$, we obtain a sequence of non-transitive subgroups $G_i=G'\times \Gamma_i$ of $G$ such that $\lim_{i\to\infty}\diam(M/G_i)=0$.
\end{proof}

In light of Theorem \ref{MT:finitepi1}, one might suspect that the Main Theorem also generalizes to the class of all compact homogeneous spaces with finite fundamental group (normalized to have a fixed diameter). This turns out to be false, as the next example shows:
\begin{example} \label{E:SO(n)}
Endow $\SO(n)\subset \R^{n^2}$ with the Riemannian metric $g$ induced by the inner product $\left<A,B\right>=\operatorname{tr}(AB^t)/2$ on $\R^{n^2}$. A straight-forward computation shows that the natural quotient map $\SO(n)\to \SO(n)/\SO(n-1)=\Ss^{n-1}$ is a Riemannian submersion, where $\Ss^{n-1}$ is endowed with the standard metric. The diameter of $(\SO(n),g)$ goes to infinity as $n\to\infty$, because it is  bounded from below by the extrinsic diameter as a subset of  $\R^{n^2}$. Indeed, 
\[ \diam(\SO(n),g) \geq d_g(I,-I)\geq d_{\R^{n^2}}(I,-I)=\sqrt{2 n}\]
when $n$ is even, and similarly for $n$ odd. 
\end{example}

\appendix

\section{Proof of Lemma \ref{L:l(S)}} \label{A:l(S)}
Here we prove Lemma \ref{L:l(S)}, which states: There exists a constant $\Co$ such that, for every finite simple group $S$ that is not cyclic or alternating, one has
\[ \frac{\log|S|}{l(S)} \leq C. \]

We use the classification of finite simple groups, see e.g. \cite{GLS, Wilson}. We may discard the sporadic groups, as there are only finitely many of them. The remaining groups are the finite simple groups of Lie type, and come in $16$ families, each parametrized by  a prime power $q$, and possibly a natural number $n$. In \cite{LandazuriSeitz74}, one finds lower bounds for $l(S,q)$ for all $S$ of Lie type, where $l(S,q)$ is defined as the smallest dimension of a projective representation of $G$ over a field of characteristic not dividing $q$. In each family there is a finite number of exceptions to this bound (listed in the third column of the table in \cite[page 419]{LandazuriSeitz74}), which we may and will ignore. Since $l(S)\geq l(S,q)$,  it suffices to show that, in each family, the quotient of $\log |S|$  by the bound provided in \cite{LandazuriSeitz74} is bounded from above. We proceed case by case, following Table 1 from \cite[page 8]{GLS}, and giving first the name as in \cite{GLS}, followed by the name used in \cite{LandazuriSeitz74} (if different). In each case we find an upper bound for the order $|S|$ (whose exact value can be found in \cite[Table 1, page 8]{GLS}), and a lower bound for the lower bound for $l(S,q)$ found in the table in \cite[page 419]{LandazuriSeitz74}.

\begin{enumerate}
\item $A_n(q)=\PSL(n+1,q)$, $n\geq 1$. Then $|S|\leq q^{n^2+n-1}$ and $l(S)\geq (q^{n-1}-1)/2\geq q^{n-1}/4$, so that 
\[\frac{\log|S|}{l(S)}\leq \frac{4(n+1)^2\log(q)}{q^{n-1}}\]
goes to zero.  
\item $^2\!A_n(q)=\PSU(n+1,q)$, $n\geq 2$. Then $|S|\leq 2q^{n^2+n-1}$ and $l(S)\geq (q^n-q)/(q+1)\geq q^{n}/4$, so that 
\[\frac{\log|S|}{l(S)}\leq \frac{4(n+1)^2\log(q)}{q^n}\]
goes to zero.   
\item $B_n(q)=\PSO(2n+1,q)$, $n\geq 3$. Then $|S|\leq q^{2n^2+n}$ and $l(S)\geq q^{2(n-1)}-q^{(n-1)}\geq q^{2(n-1)}/4$, so that 
\[\frac{\log|S|}{l(S)}\leq \frac{4(2n^2+n)\log(q)}{q^{2(n-1)}}\]
goes to zero.  
\item $^2\!B_2(q)=\operatorname{Sz}(q)$. Then $|S|\leq q^5$ and $l(S)\geq \sqrt{q/2}(q-1)\geq q/4$, so that 
\[\frac{\log|S|}{l(S)}\leq \frac{20\log(q)}{q}\]
goes to zero.  
\item $C_n(q)=\PSp(2n,q)$, $n\geq 2$.Then $|S|\leq q^{2n^2+n}$ and $l(S)\geq \min\{q^n-1,q^{n-1}(q^{n-1}-1)(q-1)\}/2\geq q^n/4$, so that 
\[\frac{\log|S|}{l(S)}\leq \frac{4(2n^2+n)\log(q)}{q^n}\]
goes to zero.  
\item $D_n(q)=\PSO^+(2n,q)$, $n\geq 4$. Then $|S|\leq q^{(2n^2-n)}$ and $l(S)\geq q^{2n-3}/2$, so that 
\[\frac{\log|S|}{l(S)}\leq \frac{2(2n)^2\log(q)}{q^{2n-3}}\]
goes to zero.  
\item $ ^2\!D_n(q)=\PSO^-(2n,q)$, $n\geq 4$. Then $|S|\leq 2q^{2n^2-n}$ and $l(S)\geq q^{2n-3}/2$, so that 
\[\frac{\log|S|}{l(S)}\leq \frac{2((2n^2-n)\log(q)+\log(2))}{q^{2n-3}}\]
goes to zero.  
\item $^3\!D_4(q)$. Then $|S|\leq 2 q^{28}$ and $l(S)\geq q^3(q^2-1)\geq q^5/2$, so that 
\[\frac{\log|S|}{l(S)}\leq \frac{2(28\log(q)+\log(2))}{q^5}\]
goes to zero.
\item $G_2(q) $. Then $|S|\leq q^{14}$ and $l(S)\geq q(q^2-1)\geq q^3/2$, so that 
\[\frac{\log|S|}{l(S)}\leq \frac{28\log(q)}{q^3}\]
goes to zero.  
\item $^2\!G_2(q)$. Then $|S|\leq q^7$ and $l(S)\geq q(q-1)\geq q^2/2$, so that 
\[\frac{\log|S|}{l(S)}\leq \frac{14\log(q)}{q^2}\]
goes to zero.  
\item $F_4(q)$. Then $|S|\leq q^{52}$ and $l(S)\geq q^{10}/4$, so that 
\[\frac{\log|S|}{l(S)}\leq \frac{208\log(q)}{q^{10}}\]
goes to zero.  
\item $^2\!F_4(q)$. Then $|S|\leq q^{26}$ and $l(S)\geq q^5/2$, so that 
\[\frac{\log|S|}{l(S)}\leq \frac{52\log(q)}{q^{10}}\]
goes to zero.  
\item $E_6(q)$. Then $|S|\leq q^{78}$ and $l(S)\geq q^{11}/2$, so that 
\[\frac{\log|S|}{l(S)}\leq \frac{156\log(q)}{q^{11}}\]
goes to zero.  
\item $^2\! E_6(q)$. Then $|S|\leq q^{78}$ and $l(S)\geq q^{15}$, so that 
\[\frac{\log|S|}{l(S)}\leq \frac{78\log(q)}{q^{15}}\]
goes to zero.  
\item $E_7(q)$. Then $|S|\leq q^{133}$ and $l(S)\geq q^{17}/2$, so that 
\[\frac{\log|S|}{l(S)}\leq \frac{266\log(q)}{q^{17}}\]
goes to zero.  
\item $E_8(q)$. Then $|S|\leq q^{248}$ and $l(S)\geq q^{29}/2$, so that 
\[\frac{\log|S|}{l(S)}\leq \frac{496\log(q)}{q^{29}}\]
goes to zero.  
\end{enumerate}

\section{Proof of Lemma \ref{L:G^0} -- Analysing the representation of $G^0$}\label{A:G^0}
As in the statement of Theorem \ref{T:irreducible}, let $G^0$ be a product of a torus with finitely many simply-connected compact connected simple Lie groups, $\Gamma$ a finite group acting on $G^0$ by automorphisms, and $G=G^0 \rtimes \Gamma$. Let $\rho:G\to\OO(V)$ be an almost faithful representation whose restriction  $\rho^0:=\rho|_{G^0}$ to $G^0$ is irreducible, and such that $\diam \Ss(V)/G $ is small (but positive). The strategy to prove Lemma \ref{L:G^0} is to show that, up to taking a subgroup of index at most $12$, $G$ is contained in one of the groups listed in Lemma \ref{L:G^0}, so that the statement will follow from Lemma \ref{L:index}. To achieve this, we will use Lemmas \ref{L:normalizer} and \ref{L:super-reducible} to show that the normalizer of $G^0$ in $\OO(V)$ is, up to small index, one of the groups listed in Lemma \ref{L:G^0}.

The proof will consist of a case-by-case analysis, with the division into cases as follows.  First, $G^0$ is either semisimple, of the form  $G^0=G_1\times\cdots\times G_k$ where the $G_i$ are simply-connected compact connected
simple Lie groups and $k\geq1$; or $G^0$ not semisimple, of the form $G^0=\U(1)\times G_1\times\cdots\times G_k$ where the $G_i$ are simply-connected
compact connected simple Lie groups and $k\geq1$. In the latter case, the torus is one-dimensional because the irreducibility of $\rho^0$ implies that the center of $G$ is one-dimensional. As we will see below, $k=1$ will lead to cases (i)--(iii) in the statement of Lemma \ref{L:G^0}, while $k\geq 2$ will lead to cases (iv)--(vii). 

Second, the action of $\Gamma$ on the simple factors may be transitive or not. And third, there is a complex irreducible representation $\pi:G^0\to \U(W)$ such that either one of two cases happen: (i) $\rho^0$ is the real form of $\pi$; or (ii) $\rho^0$ is the realification of $\pi$. Thus there are in principle $8$ cases, but as we will see below, only $4$ may actually occur.

\subsection{$G^0$ semisimple, $\Gamma$-action transitive}

We can write $W=W_1\otimes_{\C}\cdots\otimes_{\C} W_k$. 
where $\pi_i:G_i\to\U({W_i})$ 
is a complex irreducible representation. Since the action of $\Gamma$ on the set of factors of $G^0$ is
transitive, all factors are isomorphic. Fix isomorphisms 
once and for all. Now any two $\pi_i$, $\pi_j$ differ by an  
automorphism of $G_i=G_j$. By composing $\pi_i$ with an automorphism of
$G_i$, we change $\rho^0$ to an orbit-equivalent representation and may assume
all $\pi_i$ equivalent representations. 

{\bf Type (i): $\rho^0$ is a real form of $\pi$.}
If $k=1$, then $G^0$ is simple, and hence its outer automorphism group has order at most $6$. Since $\rho^0$ is of real type,  its centralizer in $\OO(V)$ is $\{\pm 1\}$. Together these imply that the index of $G^0$ in its normalizer in $\OO(V)$ is at most $12$. Now $G^0$ is as in case (i) of Lemma \ref{L:G^0},
and the desired lower bound on $\diam \Ss(V)/G$ is obtained from
Lemma~\ref{L:index}. 

Assume $k\geq 2$. Let $\gamma\in N_{\OO(V)}(G^0)$. Then $\gamma(G_i)=G_{\sigma(i)}$
for all $i$ and a permutation $\sigma\in \Sigma_k$. View 
$\gamma\in N_{\U(W)}(G^0)$ such that $\gamma$ centralizes
the real structure $\epsilon$, which we take 
$\epsilon=\epsilon_1\otimes\cdots\otimes\epsilon_k$, where 
$\epsilon_i$ are ``the same''. Define the complex endomorphism $\gamma_0$
of $W$ by
\[ \gamma_0(w_1\otimes\cdots\otimes w_k)=w_{\sigma(1)}\otimes\cdots\otimes 
w_{\sigma(k)}. \]
Then $\gamma_0$ centralizes $\epsilon$ and normalizes $G^0$
by a simple calculation. 
It follows that $\tilde\gamma:=\gamma\gamma_0^{-1}$ defines a real endomorphism 
of $V$ that normalizes $G_i$ for all $i$. 

Next we distinguish two cases:
\begin{enumerate}[(a)]
\item The $\pi_i$ are of real type.
Here 
$V=V_1\otimes_{\R}\cdots\otimes_{\R} V_k$ where 
$\rho_i:G_i\to \OO(V_i)$ is a real form of $\pi_i$.

We have $\rho|_{G_1}=(\dim_{\R}V_1)^{k-1}\rho_1$.
Lemma \ref{L:normalizer} (real case) now implies that 
$\tilde\gamma\in\OO(V_1)\times\OO(V')$, where 
$V'=V_2\otimes_{\R}\cdots\otimes_{\R} V_k$. 
Proceeding by induction, we see that 
$\tilde\gamma\in\OO(V_1)\times\OO(V_2)\times\cdots\times\OO(V_k)$. Therefore, up to an index $2$ subgroup, we have  $N_{\OO(V)}(G^0)\subset \Sigma_k\ltimes\SO(n)^k$, which appears as case (iv) in Lemma \ref{L:G^0}. Here we may assume $k>2$, because in case $k=2$ the representation is polar
and a lower bound on the diameter of the quotient follows from
Theorem~\ref{T:reflection}.

\item The $\pi_i$ are of quaternionic type and $k$ is even.
Here $\pi_1\otimes\pi_2$ and $\pi_3\otimes\cdots\otimes\pi_k$ are 
of real type. 

We have $\rho|_{G_1\times G_2}=(\dim_{\HH}W_1\otimes_{\HH} W_2)^{\frac k2-1}[\pi_1\otimes\pi_2]_{\R}$ where $[\ \ ]_\R$ denotes a real form.
Lemma \ref{L:normalizer} (real case) now implies that 
$\tilde\gamma\in\OO(W_1\otimes_{\mathbb H} W_2)\times\OO(V')$, where 
$V'$ is a real form of $W_3\otimes_{\C}\cdots\otimes_{\C} W_k$,
with components $\tilde\gamma_{12}$ and $\tilde\gamma_{3\cdots k}$.
Applying Lemma \ref{L:normalizer} (quaternionic case) to $\tilde\gamma_{12}$
and proceeding by induction with $\tilde\gamma_{3\cdots k}$, we see that 
$\tilde\gamma\in\Sp(W_1)\times\cdots\times\Sp(W_k)$. Therefore, $N_{\OO(V)}(G^0)$ is contained in the group listed in part (vi) of Lemma \ref{L:G^0}.
We may assume $k\geq 4$ because in case $k=2$ the
representation is polar.
\end{enumerate}

{\bf Type (ii): $\rho^0$ is the realification of $\pi$.}

If $k=1$, then the identity component of the normalizer of $G^0$
falls into case (ii) or (iii) of Lemma \ref{L:G^0}, according to whether $\rho^0$ is of complex or quaternionic type. Moreover, 
the outer automorphism group of the simple group $G^0$ has order at most
$6$. Therefore the index of $G^0$ in $G$ is bounded by $6$
and the desired lower bound on $\diam \Ss(V)/G$ is obtained from
Lemma~\ref{L:index}. 

Assume $k\geq 2$. Let $\gamma\in N_{\OO(V)}(G^0)$. 
We have $\gamma(G_i)=G_{\sigma(i)}$
for all $i$ and a permutation $\sigma\in \Sigma_k$. 
Define the complex endomorphism $\gamma_0$
of $W$ by
\[ \gamma_0(w_1\otimes\cdots\otimes w_k)=w_{\sigma(1)}\otimes\cdots\otimes 
w_{\sigma(k)}. \]
Then $\gamma_0$ normalizes $G_0$. 
Next we distinguish two cases:
\begin{enumerate}[(a)]
\item The $\pi_i$ are of complex type. \label{cx} 
Here $\pi$ is of complex type.
The element $\gamma$ normalizes the centralizer
of $G^0$, which is to say that $\gamma$ is a complex linear
or conjugate linear endomophism of $W$. By composing with complex conjugation,
we may assume $\gamma$ is complex linear. 
It follows that $\tilde\gamma:=\gamma\gamma_0^{-1}$ is a complex endomorphism 
of $W$ that normalizes $G_i$ for all $i$. 

We have $\pi|_{G_1}=(\dim_{\C} W_1)^{k-1}\pi_1$.
Lemma \ref{L:normalizer} (complex case) says $\tilde\gamma\in\U(W_1)\times\U(W')$,
where $W'=W_2\otimes_{\C}\otimes\cdots\otimes_{\C} W_k$
(recall $\tilde\gamma$ is complex linear). Proceeding by induction, we 
see that $\tilde\gamma\in\U(W_1)\times\cdots\times\U(W_k)$. Therefore, up to a subgroup of index $2$, $N_{\OO(V)}(G^0)$ is contained in the group listed in part (v) of Lemma \ref{L:G^0}.
We may assume $k>2$, since in case $k=2$ the representation is polar.

\item The $\pi_i$ are of quaternionic type and $k$ is odd.

Here $\pi$ is of quaternionic type and $\Sp(1)=Z_{\OO(V)}(G^0)$.
The element $\gamma$ normalizes $\Sp(1)$, and since this group has no 
outer automorphisms, we may assume 
$\gamma$ centralizes it, 
which is to say that $\gamma$ is quaternionic linear. 
Also $\gamma_0$ is quaternionic linear,
so $\tilde\gamma:=\gamma\gamma_0^{-1}$ defines a quaternionic endomorphism 
of $V$ that normalizes $G_i$ for all $i$. 

Write $V=W_1^r\otimes_{\R}V'$, where $W_1^r$ denotes the realification of $W_1$, and $V'$ is a real form
of $W_2\otimes_{\C}\otimes\cdots\otimes_{\C} W_k$. 
Then $\rho|_{G_1}=(\dim_{\R}V')\pi_1^r$ and
Lemma \ref{L:normalizer} (real case) says that $\tilde\gamma\in\OO(W_1^r)\times\OO(V')$,
with components $\tilde\gamma_1$ and $\tilde\gamma'$. 
Now we recall that $\tilde\gamma$ is quaternionic and 
$\tilde\gamma'$ is real to note that indeed 
$\tilde\gamma_1=\tilde\gamma\tilde\gamma'^{-1}\in\Sp(W_1)$, and we 
apply case (i)(b) to $\tilde\gamma'$ to deduce that 
$\tilde\gamma\in\Sp(W_1)\times\cdots\times\Sp(W_k)$. Therefore we
are in case (vii) of Lemma~\ref{L:G^0}, and we note that in case
$k=1$ the representation is polar. 

\end{enumerate}

\subsection{$G^0$ semi-simple, $\Gamma$-action non-transitive}\label{C2}
As above, we can write $W=W_1\otimes_{\C}\cdots\otimes_{\C} W_k$,  
where $\pi_i:G_i\to\U({W_i})$ 
is a complex irreducible representation.
If the action of $\Gamma$ on the set of factors of $G^0$ is
non-transitive, then $k\geq2$ and each orbit produces a connected
normal subgroup of $G$.
We shall shortly see that here we are dealing with Type (i), that is,
$V$ is a real form of $W$.

Write $G^0=G_a\times G_b$, where $G_a$ and $G_b$ are non-trivial
$\Gamma$-invariant subgroups, and write $V=V_a\otimes_\FF V_b$ accordingly. 
Since we are assuming $\diam \Ss(V)/G$ small,
Lemma \ref{L:super-reducible} says that the action
of any normal subgroup of $G$ is either irreducible or super-reducible.
It follows that $G_a$ acts by scalars and $V|_{G_b}$ is irreducible,
up to interchanging $a$ and $b$. Since $G^0$ is connected and semisimple,
this says that $G_a=\Sp(1)$, $V_a=\HH$, $\FF=\HH$ and $V_b$ is of
quaternionic type.
Note that $\Gamma$ must act transitively on the
factors of $G_b$, for otherwise $G_b=\Sp(1)\times G_c$ is a non-trivial
$\Gamma$-invariant decomposition and $\Sp(1)\Sp(1)=\SO(4)$ neither acts
by quaternionic scalars nor is irreducible on $V$.  

Now we can write $G_1=\Sp(1)$ and $W_1=\C^2$, $G_2=\cdots=G_k$, 
$\pi_2=\cdots=\pi_k$ are of quaternionic type with 
respective quaternionic structures $\epsilon_1=\cdots=\epsilon_k$
and $k$ is even.

Let $\gamma\in N_{\OO(V)}(G^0)$. Then $\gamma\in N_{\OO(V')}(G')$
where $G'=G_2\times\cdots\times G_k$ and $V'$ is the realification of 
$W_2\otimes_{\C}\cdots\otimes_{\C}W_k$. By multiplying by an 
element of $\Sp(1)$, we may assume $\gamma$ centralizes $\Sp(1)$. Now we apply 
case (ii)(b) to deduce that $\gamma\in \Sigma_{k-1}\ltimes\Sp(W_2)\times\cdots\times\Sp(W_k)$. Thus $G$ is a subgroup of the group in case (vii) of
Lemma~\ref{L:G^0} (note the different meanings of~$k$ here and there).
Note also that the case $G^0=\Sp(1)\times\Sp(n)$ is transitive
on the unit sphere. 

\subsection{$G^0$ non-semisimple.} As discussed above,
$G^0=\U(1)\times G_1\times\cdots\times G_k$. The representation~$\rho^0$
is necessarily of complex type, so it is the realification of
$\pi:G^0\to\U(W)$. Write 
 $W=\C\otimes_{\C}\otimes W_1\otimes_{\C}\cdots\otimes_{\C} W_k$. 
where $\U(1)$ acts on $\C$ by complex scalar multiplication and 
$\pi_i:G_i\to \U({W_i})$ is a complex irreducible representation.

An argument using Lemma \ref{L:super-reducible}
similar to that in subsection~\ref{C2} shows that
the action of $\Gamma$
on set of the factors of $G^0/\U(1)$ is transitive. 
Let $\gamma\in N_{\OO(V)}(G^0)$. Then $\gamma\in N_{\OO(V')}(G/\U(1))$
and we apply case (ii)(a) to see that 
\[ \gamma\in\Z_2\cdot \Sigma_k\ltimes\U(W_1)\times\cdots\times\U(W_k) \]
where $\Z_2$ acts on $V$ as complex conjugation. Therefore
up to taking a subgroup of index $2$, 
$G$ is a subgroup of the group in case~(v) of Lemma~\ref{L:G^0}.
Note that case $k=1$ is transitive on the unit sphere and
case $k=2$ is polar.

\bibliographystyle{alpha}

\def\cprime{$'$}

%\bibliographystyle{alpha}
%\bibliography{ref}
\end{document}